\newcommand{\cupp}{\operatorname{cup}}
\newcommand{\height}{\operatorname{ht}}
\newcommand{\cat}{\operatorname{cat}}
\newtheorem{theorem}{Theorem}[section]
\newtheorem{lemma}[theorem]{Lemma}
\newtheorem{proposition}[theorem]{Proposition}
\theoremstyle{definition}
\newtheorem{definition}[theorem]{Definition}
\newtheorem{example}[theorem]{Example}
\theoremstyle{remark}
\newtheorem{remark}[theorem]{Remark}
\numberwithin{equation}{section}
\begin{document}

\title[Cup-length of oriented Grassmann manifolds via Gr\"obner bases]{Cup-length of oriented Grassmann manifolds via Gr\"obner bases}

\author{Uro\v s A.\ Colovi\'c}
\address{University of Belgrade,
  Faculty of mathematics,
  Studentski trg 16,
  Belgrade,
  Serbia}
\email{mm21033@alas.matf.bg.ac.rs}

\author{Branislav I.\ Prvulovi\'c}
\address{University of Belgrade,
  Faculty of mathematics,
  Studentski trg 16,
  Belgrade,
  Serbia}
\email{bane@matf.bg.ac.rs}
\thanks{The second author was partially supported by the Ministry of Science, Technological Development and Innovations of the Republic of Serbia [contract no.\ 451-03-47/2023-01/200104].}

\subjclass[2020]{Primary 55R40, 13P10; Secondary 55M30}



\keywords{Fukaya's conjecture, Grassmann manifolds, cup-length, Gr\"obner bases}

\begin{abstract}
The aim of this paper is to prove a conjecture made by T.\ Fukaya in 2008. This conjecture concerns the exact value of the $\mathbb Z_2$-cup-length of the Grassmann manifold $\widetilde G_{n,3}$ of oriented $3$-planes in $\mathbb R^n$. Along the way, we calculate the heights of the Stiefel--Whitney classes of the canonical vector bundle over $\widetilde G_{n,3}$.
\end{abstract}

\maketitle



\section{Introduction}
\label{intro}

For a space $X$ and a commutative ring $R$, the \em $R$-cup-length \em of $X$, denoted by $\cupp_R(X)$, is defined as the supremum of the set of all integers $m$ with the property that there exist positive dimensional (not necessarily mutually different) cohomo\-logy classes $x_1,\ldots,x_m\in H^*(X;R)$ such that their cup product $x_1\cup\cdots\cup x_m$ is nonzero. Although this invariant is relevant and interesting in its own right, perhaps the main reason for studying and calculating $\cupp_R(X)$ is the fact that it provides a lower bound for the Lyusternik--Shnirel'man category $\cat(X)$ of the space $X$ (defined as the minimal $d$ with the property that $X$ can be covered with $d$ open subsets each of which is contractible in $X$). Namely, it is well known that $\cat(X)\ge1+\cupp_R(X)$ for any commutative ring $R$. Computing the Lyusternik--Shnirel'man categories of Lie groups, homogeneous spaces, and some other commonly used spaces, is a difficult and longstanding problem in topology \cite{Ganea}.

When it comes to Grassmann manifolds $G_{n,k}$ of $k$-dimensional subspaces in $\mathbb R^n$, the most notable work on their $\mathbb Z_2$-cup-length was done by Berstein \cite{Berstein}, Hiller \cite{Hiller} and Stong \cite{Stong}. In \cite{Stong} one can find the exact value of $\cupp_{\mathbb Z_2}(G_{n,k})$ for $k\le4$ and all $n$, as well as the exact value of $\cupp_{\mathbb Z_2}(G_{2^t+1,5})$ for $t\ge3$.

The computation of $\cupp_{\mathbb Z_2}(\widetilde G_{n,k})$, where $\widetilde G_{n,k}$ is the Grassmann manifold of \em oriented \em $k$-dimensional subspaces in $\mathbb R^n$, is more challenging because the cohomology algebra $H^*(\widetilde G_{n,k};\mathbb Z_2)$ is more complicated than $H^*(G_{n,k};\mathbb Z_2)$. Moreover, there is no complete description of this algebra in general. Since $\widetilde G_{n,1}=S^{n-1}$, the case $k=1$ is trivial: $\cupp_{\mathbb Z_2}(\widetilde G_{n,1})=1$ for all $n\ge2$ (it suffices to study Grassmannians $\widetilde G_{n,k}$ with $n\ge2k$, because $\widetilde G_{n,k}\cong\widetilde G_{n,n-k}$). Also, it is known that $\cupp_{\mathbb Z_2}(\widetilde G_{n,2})=\lfloor n/2\rfloor$ ($n\ge4$) \cite[Theorem 3.6]{KorbasRusin:ChRank}.

In the case $k=3$, a conjecture on the value of $\cupp_{\mathbb Z_2}(\widetilde G_{n,3})$ was presented by Fukaya in \cite{Fukaya}. In that paper, he proves the conjecture for the case $n=2^t-1$ ($t\ge3$), and this result was independently obtained by Korba\v s in \cite{Korbas:Cup}. In the meantime, the conjecture was confirmed in the cases $n\in\{2^t,2^t+1,2^t+2\}$ ($t\ge3$) by Korba\v s and Rusin \cite{Korbas:ChRank,KorbasRusin:ChRank}; in the cases $n\in\{2^t+2^{t-1}+1,2^t+2^{t-1}+2\}$ ($t\ge3$) by Rusin \cite{Rusin:ChRank}; and the former result was generalized to all $n$ such that $2^t-1\le n\le2^t-1+2^t/3$ ($t\ge3$) in \cite{PPR:ChRank}.

The main result of this paper is the following theorem, which resolves positively Fukaya's conjecture for all $n$ (\cite[Conjecture 1.2]{Fukaya}). The proof we present is self-contained and encompasses the previously known cases.

\begin{theorem}\label{thm1}
Let $n\ge7$ be a fixed integer. If $t\ge3$ is the integer with the property $2^t-1\le n<2^{t+1}-1$, then
\[\cupp_{\mathbb Z_2}(\widetilde G_{n,3})=\begin{cases}
2^t-3,& 2^t-1\le n\le2^t+2^{t-1}-2\\
2^t-2,& n=2^t+2^{t-1}-1\\
2^t-1,& n=2^t+2^{t-1}\\
n-2^s-1,& 2^{t+1}-2^{s+1}+1\le n\le2^{t+1}-2^s \quad (1\le s\le t-2)\\
\end{cases}.\]
\end{theorem}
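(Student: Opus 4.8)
The plan is to replace $\widetilde G_{n,3}$ by a purely algebraic model and then reduce Theorem~\ref{thm1} to explicit computations in a finitely presented graded $\mathbb Z_2$-algebra. The double cover $\pi\colon\widetilde G_{n,3}\to G_{n,3}$ is classified by the first Stiefel--Whitney class $w_1$ of the canonical bundle, so its Gysin sequence yields, as graded $\mathbb Z_2$-vector spaces, $H^*(\widetilde G_{n,3};\mathbb Z_2)\cong\im(\pi^*)\oplus\ker(\cdot\,w_1)$, where $\cdot\,w_1$ denotes multiplication by $w_1$ on $H^*(G_{n,3};\mathbb Z_2)$ and $\im(\pi^*)$ is the subalgebra $C_n:=\mathbb Z_2[w_2,w_3]/\widetilde I_n$, with $\widetilde I_n$ generated by the images of $\bar w_{n-2},\bar w_{n-1},\bar w_n$ under $w_1\mapsto0$. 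I would then bring in the anomalous class: let $d_n$ be the lowest degree in which $\ker(\cdot\,w_1)$ is nonzero, fix a class $a_n\in H^{d_n}(\widetilde G_{n,3};\mathbb Z_2)$ representing the new part, and record the multiplicative structure --- namely that $H^*(\widetilde G_{n,3};\mathbb Z_2)$ is generated as an algebra by $w_2,w_3,a_n$, that the $\im(\pi^*)$-submodule generated by $a_n$ fills out the complement of $C_n$, and a relation expressing $a_n^2$ (together with the products involving $a_n$ that matter near the top degree) inside $C_n$. After this step the problem becomes entirely one about the cup-length of this algebra.

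The computational heart is a Gr\"obner basis calculation for $\widetilde I_n\subseteq\mathbb Z_2[w_2,w_3]$ with respect to the weighted order given by $\deg w_2=2$, $\deg w_3=3$. Using the standard recursion $\bar w_i=w_2\bar w_{i-2}+w_3\bar w_{i-3}$ for the dual Stiefel--Whitney classes (valid after setting $w_1=0$, in characteristic $2$), I would write down explicit expressions for the three generators, whose leading monomials follow a pattern governed by the binary expansion of $n$; reducing them against each other produces the Gr\"obner basis and hence the standard monomials, the Hilbert function of $C_n$, its top nonzero degree, and --- as the largest $k$ with $w_2^{k}$, respectively $w_3^{k}$, still a standard monomial --- the heights $\height(w_2)$ and $\height(w_3)$ in $H^*(\widetilde G_{n,3};\mathbb Z_2)$, which equal the heights in $C_n$ since $w_2,w_3\in\im\pi^*$. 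This yields the computation of the heights promised in the abstract and, more to the point, is the common input to both halves of Theorem~\ref{thm1}.

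For the lower bound I would exhibit, separately in each of the four ranges of $n$, an explicit product $w_2^{a}w_3^{b}a_n^{c}$ of positive-dimensional classes with $c\in\{0,1\}$, and verify that it is nonzero --- either directly from its Gr\"obner normal form, or by pairing it via Poincar\'e duality (recall $\widetilde G_{n,3}$ is a closed orientable manifold of dimension $3(n-3)$) against a complementary standard monomial so that the product becomes the fundamental class; the number $a+b+c$ of factors is then matched with the value claimed in the theorem. The heights found above already provide part of this; the delicate points are the optimal choice of the mixed exponents $a,b$ and whether a single anomalous factor must be inserted to reach from the top degree of $C_n$ up toward $3(n-3)$.

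The upper bound is where the real difficulty lies, and I expect it to be the main obstacle. One must show that any product of $N+1$ positive-dimensional classes vanishes, $N$ being the value in Theorem~\ref{thm1}. Since $H^1(\widetilde G_{n,3};\mathbb Z_2)=0$, every factor has degree $\ge2$; writing each factor as a polynomial in $w_2,w_3$ plus $a_n$ times such a polynomial, expanding the product, and using the rewriting of $a_n^2$ to reduce every monomial to $a_n$-degree at most $1$, the product becomes a $\mathbb Z_2$-combination of terms $m$ and $a_n m$ with $m$ a monomial in $w_2,w_3$; the number of original factors is then bounded by the $(w_2,w_3)$-degree of $m$ plus at most one extra unit for the anomalous factor. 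What is left is a combinatorial assertion to be extracted from the Gr\"obner basis: among the monomials $w_2^{a}w_3^{b}$ (and $a_nw_2^{a}w_3^{b}$) that are nonzero in the algebra, the maximum of $a+b$ (respectively $a+b+1$) is exactly $N$. Making this step uniform is, I believe, the crux: one has to control products mixing many degree-$2$ and degree-$3$ classes with the anomalous class near the top degree, and in particular show that $a_n$ never buys more than the one additional factor allowed. This should require a case analysis keyed to the binary digits of $n$, that is, to which of the four cases of Theorem~\ref{thm1} the integer $n$ falls into, with the transitional values $n=2^t+2^{t-1}-1$ and $n=2^t+2^{t-1}$ treated separately.
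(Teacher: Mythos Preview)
Your overall architecture matches the paper's: a Gr\"obner basis for the ideal $I_n\trianglelefteq\mathbb Z_2[w_2,w_3]$, the heights of $\widetilde w_2$ and $\widetilde w_3$ as consequences, and a case split governed by the binary expansion of $n$. The lower bounds you describe are essentially the paper's, and your observation that the cup-length is $M_n+1$ (where $M_n=\max\{b+c:\widetilde w_2^b\widetilde w_3^c\neq0\}$) is exactly Theorem~\ref{thmcuplength}(b).

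The genuine gap is in your upper bound. Your plan is to use a presentation of $H^*(\widetilde G_{n,3};\mathbb Z_2)$ on generators $w_2,w_3,a_n$ together with ``a relation expressing $a_n^2$ \dots\ inside $C_n$'', and then reduce every long product to $a_n$-degree at most one. But no such relation for $a_n^2$ is available: as the paper notes in the introduction, there is no complete description of the algebra $H^*(\widetilde G_{n,3};\mathbb Z_2)$ in general, and in the relevant ranges $2|a_n|\le3n-9$, so $a_n^2$ is not forced to vanish for dimension reasons. Without that relation, your reduction step cannot be carried out, and even if it could, rewriting $a_n^2$ in lower-degree classes would \emph{decrease} the factor count, so showing the reduced monomial has small $a+b$ does not bound the number of original factors.

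The paper bypasses this entirely. Instead of controlling $a_n^2$, it uses the characteristic rank of $\widetilde\gamma_{n,3}$ (known from \cite{PPR:ChRank,BasuChakraborty}) to bound from below the cohomological dimension of \emph{any} indecomposable class other than $\widetilde w_2,\widetilde w_3$; since a monomial realizing the cup-length must live in the top degree $3n-9$, a pure dimension count combined with the already-established inequality $\cupp_{\mathbb Z_2}(\widetilde G_{n,3})>\height(\widetilde w_2)$ forces exactly one anomalous factor (Theorem~\ref{thmcuplength}(a)). This reduces the upper bound to the purely algebraic statement $M_n\le N-1$ in $\mathbb Z_2[w_2,w_3]/I_n$, which is then verified case by case via the Gr\"obner basis. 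You should replace your $a_n^2$-reduction by this characteristic-rank/Poincar\'e-duality argument. A minor additional point: the height of $\widetilde w_2$ is \emph{not} simply read off as the largest $k$ with $w_2^k$ a standard monomial; proving $w_2^{2^t-3}\in I_{2^t+2^{t-1}}$ (for instance) requires the nontrivial congruences of Section~\ref{heights}, which is considerably more work than you indicate.
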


In Table \ref{table:1} we give a schematic display of dependence of $\cupp_{\mathbb Z_2}(\widetilde G_{n,3})$ on $n$ in the range specified in the theorem. The second group of rows in the table corresponds to the case $s=t-2$, while the last group (the last two rows) corresponds to the case $s=1$.

\begin{table}[h!]
\scriptsize
\centering
\begin{tabular}{||m{1.9cm} m{1.8cm} m{3.4cm} m{1.7cm} m{1.8cm}||}
 \hline
 $n$ & $\cupp_{\mathbb Z_2}(\widetilde G_{n,3})$ & a monomial which realizes $\cupp_{\mathbb Z_2}(\widetilde G_{n,3})$ & $\height(\widetilde w_2)$ & $\height(\widetilde w_3)$ \\ [0.5ex]
 \hline\hline
 $2^t-1$ & $2^t-3$ & $\widetilde w_2^{2^t-4}a_{2^t-4}$ & $2^t-4$ & $2^{t-1}-2$\\
 \hline
  $2^t$ & $2^t-3$ & $\widetilde w_2^{2^t-4}a_{2^t-1}$ & $2^t-4$ & $2^{t-1}-2$\\
 \hline
  $2^t+1$ & $2^t-3$ & $\widetilde w_2^{2^t-4}a_{3n-2^{t+1}-1}$ & $2^t-4$ & $2^{t-1}-2$\\
 \hline
 $\cdot$ & $\cdot$ & $\cdot$ & $\cdot$ & $\cdot$\\
 $\cdot$ & $\cdot$ & $\cdot$ & $\cdot$ & $\cdot$\\
 $\cdot$ & $\cdot$ & $\cdot$ & $\cdot$ & $\cdot$\\
 \hline
  $2^t+2^{t-1}-2$ & $2^t-3$ & $\widetilde w_2^{2^t-4}a_{3n-2^{t+1}-1}$ & $2^t-4$ & $2^{t-1}-2$\\
 \hline
  $2^t+2^{t-1}-1$ & $2^t-2$ & $\widetilde w_2^{2^{t-1}-1}\widetilde w_3^{2^{t-1}-2}a_{2^{t+1}-4}$ & $2^t-4$ & $2^{t-1}-2$\\
\hline
  $2^t+2^{t-1}$ & $2^t-1$ & $\widetilde w_2^{2^{t-1}-1}\widetilde w_3^{2^{t-1}-1}a_{2^{t+1}-4}$ & $2^t-4$ & $2^{t-1}-1$\\
\hline\hline
  $2^t+2^{t-1}+1$ & $2^t+2^{t-2}$ & $\widetilde w_2^{2^t+2^{t-2}-1}a_{2^{t+1}-4}$ & $2^t+2^{t-2}-1$ & $2^{t-1}$\\
\hline
  $2^t+2^{t-1}+2$ & $2^t+2^{t-2}+1$ & $\widetilde w_2^{2^t+2^{t-2}-1}\widetilde w_3a_{2^{t+1}-4}$ & $2^t+2^{t-2}-1$ & $2^{t-1}+1$\\
\hline
$\cdot$ & $\cdot$ & $\cdot$ & $\cdot$ & $\cdot$\\
$\cdot$ & $\cdot$ & $\cdot$ & $\cdot$ & $\cdot$\\
$\cdot$ & $\cdot$ & $\cdot$ & $\cdot$ & $\cdot$\\
\hline
  $2^t+2^{t-1}+2^{t-2}$ & $2^t+2^{t-1}-1$ & $\widetilde w_2^{2^t+2^{t-2}-1}\widetilde w_3^{2^{t-2}-1}a_{2^{t+1}-4}$ & $2^t+2^{t-2}-1$ & $2^{t-1}+2^{t-2}-1$\\
\hline\hline
 $\cdot$ & $\cdot$ & $\cdot$ & $\cdot$ & $\cdot$\\
 $\cdot$ & $\cdot$ & $\cdot$ & $\cdot$ & $\cdot$\\
 $\cdot$ & $\cdot$ & $\cdot$ & $\cdot$ & $\cdot$\\
 \hline\hline
$2^{t+1}-2^{s+1}+1$ & $2^{t+1}-3\cdot2^s$ & $\widetilde w_2^{2^{t+1}-3\cdot2^s-1}a_{2^{t+1}-4}$ & $2^{t+1}-3\cdot2^s-1$ & $2^t-2^{s+1}$\\
\hline
$2^{t+1}-2^{s+1}+2$ & $2^{t+1}-3\cdot2^s+1$ & $\widetilde w_2^{2^{t+1}-3\cdot2^s-1}\widetilde w_3a_{2^{t+1}-4}$ & $2^{t+1}-3\cdot2^s-1$ & $2^t-2^{s+1}+1$\\
\hline
$\cdot$ & $\cdot$ & $\cdot$ & $\cdot$ & $\cdot$\\
$\cdot$ & $\cdot$ & $\cdot$ & $\cdot$ & $\cdot$\\
$\cdot$ & $\cdot$ & $\cdot$ & $\cdot$ & $\cdot$\\
\hline
$2^{t+1}-2^s$ & $2^{t+1}-2^{s+1}-1$ & $\widetilde w_2^{2^{t+1}-3\cdot2^s-1}\widetilde w_3^{2^s-1}a_{2^{t+1}-4}$ & $2^{t+1}-3\cdot2^s-1$ & $2^t-2^s-1$\\
\hline\hline
 $\cdot$ & $\cdot$ & $\cdot$ & $\cdot$ & $\cdot$\\
 $\cdot$ & $\cdot$ & $\cdot$ & $\cdot$ & $\cdot$\\
 $\cdot$ & $\cdot$ & $\cdot$ & $\cdot$ & $\cdot$\\
 \hline\hline
$2^{t+1}-3$ & $2^{t+1}-6$ & $\widetilde w_2^{2^{t+1}-7}a_{2^{t+1}-4}$ & $2^{t+1}-7$ & $2^t-4$\\
\hline
$2^{t+1}-2$ & $2^{t+1}-5$ & $\widetilde w_2^{2^{t+1}-7}\widetilde w_3a_{2^{t+1}-4}$ & $2^{t+1}-7$ & $2^t-3$\\
\hline
\end{tabular}
\caption{}
\label{table:1}
\end{table}

Let us remark here that the notation in \cite{Fukaya} (and likewise in \cite{PPR:ChRank}) is slightly diffe\-rent than the one used in this paper. Here, $\widetilde G_{n,3}$ consists of oriented $3$-dimensional subspaces of $\mathbb R^n$, while in \cite{Fukaya} $\widetilde G_{n,3}$ denotes the manifold of oriented $3$-dimensional subspaces of $\mathbb R^{n+3}$. So, one should make this adjustment when comparing Theorem \ref{thm1} with Conjecture 1.2 from \cite{Fukaya}.

Intimately connected with cup-length is the notion of the height of a cohomology class. For such a class $x$, its \em height \em, denoted by $\height(x)$, is defined as the supremum of the set of all integers $m$ with the property $x^m\neq0$. Let $\widetilde w_i\in H^i(\widetilde G_{n,3};\mathbb Z_2)$, $i=2,3$, be the Stiefel--Whitney classes of the canonical ($3$-dimensional) vector bundle $\widetilde\gamma_{n,3}$ over $\widetilde G_{n,3}$. In this paper we compute the heights of these Stiefel--Whitney classes for all values of $n$ (cf.\ Table \ref{table:1}).

\begin{theorem}\label{thm2}
Let $n\ge7$ be a fixed integer. If $t\ge3$ is the integer with the property $2^t-1\le n<2^{t+1}-1$, then
\[\height(\widetilde w_2)=\begin{cases}
2^t-4,& 2^t-1\le n\le2^t+2^{t-1}\\
2^{t+1}-3\cdot2^s-1,& 2^{t+1}-2^{s+1}+1\le n\le2^{t+1}-2^s \quad (1\le s\le t-2)\\
\end{cases}.\]
\end{theorem}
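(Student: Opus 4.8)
The plan is to work inside the standard presentation of $H^*(\widetilde G_{n,3};\mathbb Z_2)$ as a quotient of $H^*(G_{n,3};\mathbb Z_2)=\mathbb Z_2[w_1,w_2,w_3]/I_{n,3}$, where $I_{n,3}$ is generated by the dual Stiefel--Whitney classes $\overline w_{n-2},\overline w_{n-1},\overline w_n$; since $\widetilde G_{n,3}$ is the double cover corresponding to $w_1$, one has $H^*(\widetilde G_{n,3};\mathbb Z_2)\cong\mathbb Z_2[w_1,w_2,w_3]/(I_{n,3}+(w_1))\oplus(\text{an ``anomalous'' part})$, and $\widetilde w_2,\widetilde w_3$ are the images of $w_2,w_3$. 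The anomalous classes $a_j$ appearing in Table \ref{table:1} live in degrees where the first summand vanishes, so they are irrelevant to computing $\height(\widetilde w_2)$: powers of $\widetilde w_2$ stay in the ``polynomial part'' $C_{n,3}:=\mathbb Z_2[w_2,w_3]/J_n$, where $J_n$ is the ideal obtained by setting $w_1=0$ in $I_{n,3}$ (equivalently, generated by $\overline w_{n-2},\overline w_{n-1},\overline w_n$ evaluated at $w_1=0$). Thus $\height(\widetilde w_2)$ equals the largest $m$ with $w_2^m\notin J_n$, a purely algebraic question about the ideal $J_n\subset\mathbb Z_2[w_2,w_3]$. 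This is exactly where Gr\"obner bases enter: fixing a monomial order (say, the one graded by cohomological degree and then refined appropriately), one computes — or rather, one guesses and then verifies — a Gr\"obner basis $\mathcal G_n$ of $J_n$, and then $w_2^m\in J_n$ iff the normal form of $w_2^m$ modulo $\mathcal G_n$ is zero, iff some leading term in $\mathcal G_n$ is a pure power $w_2^\ell$ with $\ell\le m$.

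The key steps, in order, are as follows. First, translate the topological statement into the statement ``$\height(\widetilde w_2)=$ the largest $m$ such that $w_2^m$ has nonzero normal form modulo $J_n$'', justifying that the anomalous summand does not interfere (powers of $\widetilde w_2$ land in the image of $\mathbb Z_2[w_2,w_3]$). Second, record the three generators $g_{n-2}:=\overline w_{n-2}|_{w_1=0}$, $g_{n-1}:=\overline w_{n-1}|_{w_1=0}$, $g_n:=\overline w_n|_{w_1=0}$ of $J_n$ as explicit polynomials in $w_2,w_3$; because $\overline w$'s satisfy the recursion dual to $(1+w_1+w_2+w_3)$, setting $w_1=0$ gives a clean three-term recursion $g_j=w_2 g_{j-2}+w_3 g_{j-3}$, and one can extract closed-forms for the relevant $g_j$ using the usual $\binom{\cdot}{\cdot}\bmod 2$ / Lucas-type identities. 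Third — the heart of the matter — produce the Gr\"obner basis of $J_n$ for each residue class of $n$ (the cases $2^t-1\le n\le 2^t+2^{t-1}$ versus $2^{t+1}-2^{s+1}+1\le n\le 2^{t+1}-2^s$), identify the unique element whose leading term is a power of $w_2$, and read off that exponent; it should be $2^t-3$ in the first range (so $w_2^{2^t-4}\neq0$, $w_2^{2^t-3}=0$) and $2^{t+1}-3\cdot2^s$ in the second. Fourth, verify the Gr\"obner property via Buchberger's criterion — showing all $S$-polynomials among the proposed generators reduce to zero — which, given the sparsity of the relations, reduces to a finite family of binomial-coefficient identities mod $2$ that can be dispatched with Kummer's theorem on carries.

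I expect the main obstacle to be Step 3: guessing the correct shape of the Gr\"obner basis uniformly across all $n$ in each of the two regimes, and in particular pinning down the single $w_2$-power relation. The generators $g_{n-2},g_{n-1},g_n$ by themselves are almost never a Gr\"obner basis, so one must run (finitely many, but $n$-dependent) Buchberger steps and recognize the stabilizing pattern; the binary expansion of $n$ controls which reductions collapse, which is why the answer bifurcates according to where $n$ sits between $2^t-1$ and $2^{t+1}-1$. A secondary difficulty is bookkeeping the mod-$2$ binomial coefficients that arise when expanding $\overline w_j|_{w_1=0}$ and when reducing $S$-polynomials: one wants a lemma, probably proved earlier in the paper, giving $\overline w_j|_{w_1=0}=\sum \binom{j-2i}{i} w_2^{?}w_3^{?}$-type formulas and the precise conditions (in terms of the digits of $n$) under which the top relation is a clean power of $w_2$. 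Once that lemma and the Gr\"obner basis are in hand, extracting $\height(\widetilde w_2)$ is immediate, and the two cases of the theorem follow by matching the $w_2$-leading exponent minus one against $2^t-4$ and $2^{t+1}-3\cdot2^s-1$ respectively.
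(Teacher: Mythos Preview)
Your overall framework is correct and matches the paper: the height of $\widetilde w_2$ is governed by the ideal $I_n=(g_{n-2},g_{n-1},g_n)\subset\mathbb Z_2[w_2,w_3]$, and the paper does construct a Gr\"obner basis $F=\{f_0,\ldots,f_{t-1}\}$ for $I_n$ (its Theorem~\ref{Grebner}) with respect to a lex order. However, Step~3 of your plan contains a genuine error. You write that $w_2^m\in J_n$ iff ``some leading term in $\mathcal G_n$ is a pure power $w_2^\ell$ with $\ell\le m$''. The forward implication is fine, but the reverse is false: if $\mathrm{LM}(f)=w_2^\ell$ and $\ell\le m$, reducing $w_2^m$ by $f$ yields a polynomial of lower $w_2$-degree that need not reduce further to zero. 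Concretely, for $n=7$ ($t=3$) the paper's Gr\"obner basis has leading monomials $w_2^3,\ w_2^2w_3,\ w_3^3$; your rule would give $\height(\widetilde w_2)=2$, but in fact $w_2^4\equiv w_2w_3^2\pmod{I_7}$ is in normal form and nonzero, while $w_2^5\equiv w_2^2w_3^2=w_3f_1\in I_7$, so $\height(\widetilde w_2)=4=2^3-4$. In the paper's basis the only pure $w_2$-power leading term is $\mathrm{LM}(f_0)=w_2^{\lfloor(n-1)/2\rfloor}$, which is far smaller than the claimed height in either regime.

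What the paper actually does, and what your plan is missing, is an additional layer of algebra that sits \emph{on top of} the Gr\"obner basis. Rather than reducing $w_2^m$ step by step, the paper proves closed-form congruences (Proposition~\ref{prva lema cong}, Theorems~\ref{prva polovina} and~\ref{druga polovina}) obtained by an inductive ``square and use $w_3g_r^2=g_{2r+3}$'' technique. These identities show, for instance, that $w_2^{2^t-4}\equiv w_2^{2^{t-2}-1}w_3^{2^{t-1}-2}\pmod{I_{2^t-1}}$ and $w_2^{2^t-3}\in I_{2^t+2^{t-1}}$ directly. Only \emph{then} is the Gr\"obner basis invoked, and only to check that the specific target monomial (e.g.\ $w_2^{2^{t-2}-1}w_3^{2^{t-1}-2}$) is not divisible by any $\mathrm{LM}(f_i)$, hence nonzero. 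So the Gr\"obner basis settles nonvanishing of a carefully chosen monomial, but producing that monomial requires the separate congruence machinery; your plan would need an analogue of this before it could succeed.
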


\begin{theorem}\label{thm3}
Let $n\ge7$ be a fixed integer. If $t\ge3$ is the integer with the property $2^t-1\le n<2^{t+1}-1$, then
\[\height(\widetilde w_3)=\max\{2^{t-1}-2,n-2^t-1\}.\]
\end{theorem}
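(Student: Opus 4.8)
The plan is to extract the value of $\height(\widetilde w_3)$ from the cup-length computations summarized in Table \ref{table:1}, splitting into the two regimes $2^t-1\le n\le 2^t+2^{t-1}$ and $2^t+2^{t-1}+1\le n\le 2^{t+1}-2$ (equivalently, $2^{t+1}-2^{s+1}+1\le n\le 2^{t+1}-2^s$ for some $1\le s\le t-2$). In the first regime one has $n-2^t-1\le 2^{t-1}-2$, so the claimed value is $2^{t-1}-2$; in the second, $n-2^t-1\ge 2^{t-1}$, so the claimed value is $n-2^t-1$. In each case I would prove a matching lower bound (a nonvanishing power of $\widetilde w_3$) and an upper bound (the next power vanishes), exactly as is done for $\widetilde w_2$ in Theorem \ref{thm2}. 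The lower bounds should come essentially for free from the monomials already exhibited in Table \ref{table:1}: for instance $\widetilde w_2^{2^{t-1}-1}\widetilde w_3^{2^{t-1}-1}a_{2^{t+1}-4}\neq 0$ when $n=2^t+2^{t-1}$ forces $\widetilde w_3^{2^{t-1}-1}\neq 0$, hence $\height(\widetilde w_3)\ge 2^{t-1}-1>2^{t-1}-2$ there, and similarly the monomials $\widetilde w_3^{2^{t-2}-1}$, $\widetilde w_3^{2^s-1}$ appearing in the lower rows give the lower bound $n-2^t-1$ in the second regime.

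The crux is therefore the upper bounds, and for those I expect to use the Gröbner basis machinery that the paper develops (as advertised in the title and abstract). Concretely, $H^*(\widetilde G_{n,3};\mathbb Z_2)$ is a quotient of $\mathbb Z_2[\widetilde w_2,\widetilde w_3]$ (extended by the anomaly classes $a_j$) by an ideal whose Gröbner basis controls when a monomial is reducible to zero; I would compute the normal form of $\widetilde w_3^{m}$ for the first value of $m$ beyond the claimed height and show it reduces to $0$. In the first regime this means showing $\widetilde w_3^{2^{t-1}-1}=0$ for $2^t-1\le n\le 2^t+2^{t-1}-1$ (note $n=2^t+2^{t-1}$ is genuinely exceptional, matching the $2^{t-1}-1$ entry in the table and the fact that $\max\{2^{t-1}-2,\,2^{t-1}-1\}=2^{t-1}-1=n-2^t-1$ there, so the formula is still correct). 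In the second regime it means showing $\widetilde w_3^{\,n-2^t}=0$ for $2^{t+1}-2^{s+1}+1\le n\le 2^{t+1}-2^s$, i.e. that the power of $\widetilde w_3$ one above $n-2^t-1$ already dies.

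The main obstacle is the degree bookkeeping in these Gröbner-basis reductions: the relevant relations in $H^*(\widetilde G_{n,3};\mathbb Z_2)$ depend delicately on the binary expansion of $n$ (and of $n$ relative to $2^t$ and $2^s$), so the vanishing argument for $\widetilde w_3^m$ has to be organized as a careful induction on $s$ (or on $n$ within each dyadic block), tracking exactly which leading terms of the Gröbner basis apply. A secondary subtlety is that $\widetilde w_3$ is in degree $3$ and $\widetilde G_{n,3}$ has dimension $3(n-3)$, so the purely dimensional bound $\height(\widetilde w_3)\le n-3$ is far too weak and genuinely cohomological input is needed throughout; one cannot avoid the ring structure. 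I would also double-check the boundary cases $s=1$ and $s=t-2$ against the explicit bottom and middle rows of Table \ref{table:1} to make sure the uniform formula $\max\{2^{t-1}-2,\,n-2^t-1\}$ is not off by one at the transitions.
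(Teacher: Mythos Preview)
Your lower-bound strategy has a genuine gap. The monomials in Table~\ref{table:1} that realize $\cupp_{\mathbb Z_2}(\widetilde G_{n,3})$ do \emph{not} carry powers of $\widetilde w_3$ high enough to witness the claimed height. For instance, when $2^t-1\le n\le 2^t+2^{t-1}-2$ the realizing monomial is $\widetilde w_2^{2^t-4}a_{\ast}$, with no $\widetilde w_3$ factor at all, so it gives no information toward $\widetilde w_3^{2^{t-1}-2}\neq0$. In the second regime it is worse: for $2^{t+1}-2^{s+1}+1\le n\le 2^{t+1}-2^s$ the exponent of $\widetilde w_3$ in the table monomial is $n-2^{t+1}+2^{s+1}-1$, which is strictly smaller than the target $n-2^t-1$ (since $s\le t-2$). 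Concretely, at $n=2^t+2^{t-1}+1$ the theorem asserts $\height(\widetilde w_3)=2^{t-1}$, but the table monomial $\widetilde w_2^{2^t+2^{t-2}-1}a_{2^{t+1}-4}$ contains no $\widetilde w_3$ whatsoever. So the cup-length data simply cannot be reverse-engineered into the height of $\widetilde w_3$; you need an independent argument for nonvanishing. There is also a logical-order issue: the nonzero monomials in the table are established only in the proof of Theorem~\ref{thm1}, which comes after (and relies on the machinery used to prove) Theorems~\ref{thm2} and~\ref{thm3}.

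The paper's actual proof is far shorter than you anticipate and avoids all of this. The last element of the Gr\"obner basis $F$ for $I_n$ is computed explicitly (formula~(\ref{ft-1})) to be the pure power $f_{t-1}=w_3^{\alpha_{t-1}s_{t-2}+2^{t-1}-1}$. This single observation yields both bounds at once: $f_{t-1}\in I_n$ gives the upper bound, and the monomial $w_3^{\alpha_{t-1}s_{t-2}+2^{t-1}-2}$ is divisible by no $\mathrm{LM}(f_i)$ (since $\deg_{w_2}\mathrm{LM}(f_i)>0$ for $i\le t-2$ by~(\ref{LMgiopada})), so the Gr\"obner property gives the lower bound. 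A two-line computation then checks $\alpha_{t-1}s_{t-2}+2^{t-1}-2=\max\{2^{t-1}-2,\,n-2^t-1\}$, splitting only on whether $\alpha_{t-1}=0$ or $1$. No induction on $s$ and no normal-form reductions are needed.
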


Note that all three theorems omit the case $n=6$. However, in this particular case there is a description of the cohomology algebra $H^*(\widetilde G_{n,3};\mathbb Z_2)$ \cite[Proposition 3.1(1)]{KorbasRusin:Palermo}, from which it is readily seen that $\cupp_{\mathbb Z_2}(\widetilde G_{6,3})=3$ and $\height(\widetilde w_2)=\height(\widetilde w_3)=1$ (so this case fits into Table \ref{table:1} as its last row for $t=2$).

The classes $a_m$ (for various $m$) appearing in Table \ref{table:1} are the so-called "anomalous" or "indecomposable" classes (other than $\widetilde w_2$ and $\widetilde w_3$) in $H^*(\widetilde G_{n,3};\mathbb Z_2)$ detected by Basu and Chakraborty in \cite[Theorem A]{BasuChakraborty}.

The main tool for proving Theorems \ref{thm1}--\ref{thm3} will be the theory of Gr\"obner bases. The subalgebra of $H^*(\widetilde G_{n,3};\mathbb Z_2)$ generated by $\widetilde w_2$ and $\widetilde w_3$ is known to be isomorphic to the quotient of the polynomial algebra $\mathbb Z_2[w_2,w_3]$ by an ideal $I_n$. This ideal is generated by well-known polynomials $g_{n-2},g_{n-1},g_n\in\mathbb Z_2[w_2,w_3]$. In Theorem \ref{Grebner}, for an arbitrary $n$, we detect a Gr\"obner basis for $I_n$, which allows us to perform some nontrivial calculations in $H^*(\widetilde G_{n,3};\mathbb Z_2)$. This Gr\"obner basis for $I_n$ turns out to be a generalization of the Gr\"obner basis obtained by Fukaya \cite{Fukaya} in the special case $n=2^t-1$ (actually, the ideal $I_{2^t-1}$ happens to coincide with $I_{2^t}$, and so, Fukaya's basis cover the case $n=2^t$ as well).

The paper is organized as follows. In Section \ref{prel} we collect some preliminary facts concerning the cohomology algebra $H^*(\widetilde G_{n,3};\mathbb Z_2)$ and its subalgebra generated by $\widetilde w_2$ and $\widetilde w_3$. In Section \ref{Grobner}, after a very brief introduction to the theory of Gr\"obner bases over the field $\mathbb Z_2$ , we exhibit a set of polynomials in $\mathbb Z_2[w_2,w_3]$, and eventually prove that it is a Gr\"obner basis for the ideal $I_n$. Section \ref{heights} is devoted to computing the heights of $\widetilde w_2$ and $\widetilde w_3$. Both computations use the Gr\"obner basis obtained in Section \ref{Grobner}. We first prove Theorem \ref{thm3}, and then perform a considerable amount of calculation in the polynomial algebra $\mathbb Z_2[w_2,w_3]$ in order to prove Theorem \ref{thm2}. Finally, in Section \ref{cup-length} we apply the results from previous two sections and prove Theorem \ref{thm1}.

In the rest of the paper the $\mathbb Z_2$ coefficients for cohomology will be understood, and so we will abbreviate $H^*(\widetilde G_{n,3};\mathbb Z_2)$ to $H^*(\widetilde G_{n,3})$.

\section{Background on cohomology algebra $H^*(\widetilde G_{n,3})$}
\label{prel}

Let $n\ge7$ be an integer and $W_n$ the subalgebra of the cohomology algebra $H^*(\widetilde G_{n,3})$ generated by the Stiefel--Whitney classes $\widetilde w_2$ and $\widetilde w_3$. It is well known (see e.g.\ \cite{Fukaya}) that
\begin{equation}\label{isomorphism}
W_n\cong\frac{\mathbb Z_2[ w_2, w_3]}{(g_{n-2},g_{n-1},g_n)},
\end{equation}
where $g_r\in\mathbb Z_2[w_2,w_3]$ is the homogeneous polynomial (of degree $r$) obtained from the equation
\[(1+w_2+w_3)(g_0+g_1+g_2+\cdots)=1\]
(it is understood that the degree of $w_i$ is $i$). It is obvious that $g_0=1$, $g_1=0$, $g_2=w_2$, and that the following recurrence formula holds:

\begin{equation}\label{recgpolk3}
g_{r+3}=w_2g_{r+1}+w_3g_r \quad \mbox{for all } r\ge0.
\end{equation}
Now it is easy to calculate a few of these polynomials. In Table \ref{table:2} we list polynomials $g_r$ for $0\le r\le25$.

\begin{table}[h!]
\footnotesize
\centering
\begin{tabular}{||m{0.9cm} m{3cm} ||}
\hline
 $r$ & $g_r$ \\ [0.5ex]
\hline\hline
 $0$ & $1$\\
\hline
 $1$ & $0$\\
\hline
 $2$ & $w_2$\\
\hline
 $3$ & $w_3$\\
\hline
 $4$ & $w_2^2$\\
\hline
 $5$ & $0$\\
\hline
 $6$ & $w_2^3+w_3^2$\\
\hline
 $7$ & $w_2^2w_3$\\
\hline
 $8$ & $w_2^4+w_2w_3^2$\\
\hline
 $9$ & $w_3^3$\\
\hline
 $10$ & $w_2^5$\\
\hline
 $11$ & $w_2^4w_3$\\
\hline
 $12$ & $w_2^6+w_3^4$\\
\hline
 $13$ & $0$\\
\hline
 $14$ & $w_2^7+w_2^4w_3^2+w_2w_3^4$\\
\hline
 $15$ & $w_2^6w_3+w_3^5$\\
\hline
 $16$ & $w_2^8+w_2^5w_3^2+w_2^2w_3^4$\\
\hline
 $17$ & $w_2^4w_3^3$\\
\hline
 $18$ & $w_2^9+w_2^3w_3^4+w_3^6$\\
\hline
 $19$ & $w_2^8w_3+w_2^2w_3^5$\\
\hline
 $20$ & $w_2^{10}+w_2w_3^6$\\
\hline
 $21$ & $w_3^7$\\
\hline
 $22$ & $w_2^{11}+w_2^8w_3^2$\\
\hline
 $23$ & $w_2^{10}w_3$\\
\hline
 $24$ & $w_2^{12}+w_2^9w_3^2+w_3^8$\\
\hline
 $25$ & $w_2^8w_3^3$\\
\hline
\end{tabular}
\caption{}
\label{table:2}
\end{table}

For convenience, denote the ideal $(g_{n-2},g_{n-1},g_n)$ by $I_n$. By (\ref{recgpolk3}) we see that actually
\begin{equation}\label{uskladiti}
g_r\in I_n \quad \mbox{ for all } r\ge n-2.
\end{equation}
An obvious consequence of (\ref{uskladiti}) is the fact that the sequence of ideals $\{I_n\}_{n\ge2}$ is descending:
\[I_n\supseteq I_{n+1} \quad \mbox{ for all } n\ge2.\]

Via the isomorphism (\ref{isomorphism}) the class of $w_i$ in the quotient $\mathbb Z_2[w_2,w_3]/I_n$ corresponds to the Stiefel--Whitney class $\widetilde w_i\in H^i(\widetilde G_{n,3})$, $i=2,3$ (this is the reason why the grading on $\mathbb Z_2[w_2,w_3]$ is such that the degree of $w_i$ is $i$). So the following equivalence (which we use throughout the paper) holds for all nonnegative integers $b$ and $c$:
\begin{equation}\label{ekv}
w_2^bw_3^c\in I_n \quad \Longleftrightarrow \quad \widetilde w_2^b\widetilde w_3^c=0 \mbox{ in } H^*(\widetilde G_{n,3}).
\end{equation}

The identity (\ref{recgpolk3}) can easily be generalized. One can use induction on $j$ to prove that
\begin{equation}\label{lemspol1}
g_{r+3\cdot2^j}=w_2^{2^j}g_{r+2^j}+w_3^{2^j}g_r, \quad r,j\ge0.
\end{equation}
(see \cite[(2.4)]{Rusin:ChRank}).

The following lemma will be repeatedly used throughout the paper.

\begin{lemma}\label{kvadriranje}
For all nonnegative integers $r$ we have
\[w_3g_r^2=g_{2r+3}.\]
\end{lemma}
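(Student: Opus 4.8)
The plan is to prove the identity $w_3 g_r^2 = g_{2r+3}$ by induction on $r$, using the recurrence (\ref{recgpolk3}) together with the fact that squaring is a ring homomorphism in characteristic $2$. First I would verify the base cases directly from Table \ref{table:2}: for $r=0$ we need $w_3 g_0^2 = w_3 = g_3$, which holds; for $r=1$ we need $w_3 g_1^2 = 0 = g_5$, which holds; and for $r=2$ we need $w_3 g_2^2 = w_3 w_2^2 = g_7$, which again holds. Having three consecutive base cases is natural here because the recurrence (\ref{recgpolk3}) relates $g_{r+3}$ to $g_{r+1}$ and $g_r$, so I would run a strong induction stepping by $3$.

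For the inductive step, suppose the identity holds for all indices up to $r+2$, and consider $g_{2(r+3)+3} = g_{2r+9}$. The key move is to express this using the generalized recurrence (\ref{lemspol1}) with $j=1$: taking $r \mapsto 2r+3$ in (\ref{lemspol1}) gives
\[
g_{(2r+3)+6} = w_2^2 g_{(2r+3)+2} + w_3^2 g_{2r+3},
\]
that is, $g_{2r+9} = w_2^2 g_{2r+5} + w_3^2 g_{2r+3}$. Now $2r+5 = 2(r+1)+3$ and $2r+3 = 2r+3$, so by the induction hypothesis applied at indices $r+1$ and $r$ we get $g_{2r+5} = w_3 g_{r+1}^2$ and $g_{2r+3} = w_3 g_r^2$. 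Substituting,
\[
g_{2r+9} = w_2^2 \cdot w_3 g_{r+1}^2 + w_3^2 \cdot w_3 g_r^2 = w_3\bigl(w_2^2 g_{r+1}^2 + w_3^2 g_r^2\bigr) = w_3\bigl(w_2 g_{r+1} + w_3 g_r\bigr)^2,
\]
where the last equality uses that the Frobenius map $x \mapsto x^2$ is additive over $\mathbb{Z}_2$. By the recurrence (\ref{recgpolk3}), $w_2 g_{r+1} + w_3 g_r = g_{r+3}$, so $g_{2r+9} = w_3 g_{r+3}^2$, completing the induction.

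There is essentially no serious obstacle here; the only point requiring a little care is making sure the chosen base cases ($r=0,1,2$) suffice to launch the step-by-$3$ induction, and confirming that (\ref{lemspol1}) is available for the shifted index $2r+3$ (it is, since (\ref{lemspol1}) holds for all $r,j\ge0$). An alternative, perhaps even cleaner, route avoids (\ref{lemspol1}) entirely: one can prove the identity simultaneously for three consecutive values of $r$ by noting that $g_{2r+3}$, $g_{2r+5}$, $g_{2r+7}$ together with $g_{2r+9}$ are linked by two applications of (\ref{recgpolk3}), and matching these against the images under squaring of the relations among $g_r, g_{r+1}, g_{r+2}, g_{r+3}$. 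Either way, the crux is the characteristic-$2$ identity $(w_2 g_{r+1} + w_3 g_r)^2 = w_2^2 g_{r+1}^2 + w_3^2 g_r^2$, which turns the recurrence into its own "square root."
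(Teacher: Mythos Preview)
Your proof is correct and follows essentially the same approach as the paper: induction on $r$ with the three base cases $r=0,1,2$, then an induction step that combines the recurrence (\ref{recgpolk3}), the Frobenius identity in characteristic $2$, and the $j=1$ case of (\ref{lemspol1}). The only cosmetic difference is the direction of the computation---the paper starts from $w_3 g_r^2$ (for $r\ge3$, assuming the result for $r-2$ and $r-3$) and arrives at $g_{2r+3}$, whereas you start from $g_{2r+9}$ and work back to $w_3 g_{r+3}^2$; the algebra is identical.
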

\begin{proof}
The proof is by induction on $r$. It is obvious from Table \ref{table:2} that the lemma is true for $0\le r\le2$. Now, if $r\ge3$ and the lemma is true for $r-3$ and $r-2$, then by (\ref{lemspol1}) we have
\begin{align*}
w_3g_r^2&=w_3(w_2g_{r-2}+w_3g_{r-3})^2=w_3(w_2^2g_{r-2}^2+w_3^2g_{r-3}^2)=w_2^2w_3g_{r-2}^2+w_3^3g_{r-3}^2\\
        &=w_2^2g_{2r-1}+w_3^2g_{2r-3}=g_{2r+3},
\end{align*}
and the induction is completed.
\end{proof}

As the first usage of this lemma, we single out and compute a few types of polynomials $g_r$.

\begin{proposition}\label{g-3}
Let $t\ge2$ be an integer. Then:
\begin{itemize}
\item[(a)] $g_{2^t-3}=0$;
\item[(b)] $g_{2^t+2^{t-1}-3}=w_3^{2^{t-1}-1}$;
\item[(c)] $g_{2^t+2^{t-2}-3}=w_2^{2^{t-2}}w_3^{2^{t-2}-1}$;
\item[(d)] $g_{2^t+2^{t-1}+2^{t-2}-3}=w_2^{2^{t-1}}w_3^{2^{t-2}-1}$;
\item[(e)] $g_{2^t+2^{t-1}+2^{t-3}-3}=w_2^{2^{t-1}+2^{t-3}}w_3^{2^{t-3}-1}$ (if $t\ge3$).
\end{itemize}
\end{proposition}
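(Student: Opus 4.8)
My plan is to treat all five identities uniformly by exploiting Lemma \ref{kvadriranje} together with the doubling formula (\ref{lemspol1}), reducing each claim to a smaller instance and ultimately to a base case visible in Table \ref{table:2}. The key observation is that every exponent appearing on the left-hand side has the shape $2^t + (\text{something}) - 3$, which is exactly the form $2r+3$ when $r = 2^{t-1} + (\text{something})/2 - 3$; so applying $w_3 g_r^2 = g_{2r+3}$ in reverse expresses each $g$ in the proposition as $w_3$ times the square of a $g$ with roughly half the index, and crucially of the same structural type. Concretely, for (a) I would note $2^t-3 = 2(2^{t-1}-3)+3$, hence $g_{2^t-3} = w_3\, g_{2^{t-1}-3}^2$; since $g_1 = 0$ (Table \ref{table:2}) gives the base case $t=2$ wait — here one must be slightly careful, as $2^2-3 = 1$ and $g_1=0$ directly, so induction on $t$ starting from $t=2$ closes (a) immediately.

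For (b)–(e) the same halving works because each of the numbers $2^{t-1}, 2^{t-2}, 2^{t-1}+2^{t-2}, 2^{t-1}+2^{t-3}$ is even (for $t$ large enough), so the "extra" part halves cleanly and the index stays in the same family with $t$ decreased by one. For instance, for (b): $2^t+2^{t-1}-3 = 2\bigl(2^{t-1}+2^{t-2}-3\bigr)+3$, so by Lemma \ref{kvadriranje} and the inductive hypothesis, $g_{2^t+2^{t-1}-3} = w_3\,\bigl(w_3^{2^{t-2}-1}\bigr)^2 = w_3^{2^{t-1}-1}$, with base case $t=2$ checked against Table \ref{table:2} ($g_3 = w_3$, and $2^{1}-1 = 1$). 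Parts (c), (d), (e) proceed identically: halve the index, apply Lemma \ref{kvadriranje}, square the claimed monomial from the previous step, and multiply by $w_3$; one checks that squaring $w_2^{a}w_3^{b-1}$ and multiplying by $w_3$ yields $w_2^{2a}w_3^{2b-1}$, which matches the exponents predicted by doubling $t$. The base cases are $t=2$ for (c),(d) and $t=3$ for (e) — all within the range of Table \ref{table:2} (e.g. for (c), $g_5 = 0$... no: $2^2+2^0-3 = 2$, $g_2 = w_2 = w_2^{1}w_3^{0}$, matching $2^{t-2}=1$, $2^{t-2}-1=0$; for (e) with $t=3$, $2^3+2^2+2^0-3 = 10$, and $g_{10} = w_2^{5} = w_2^{2^2+2^0}w_3^{0}$, matching).

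The main obstacle, such as it is, will be bookkeeping the base cases and the low-$t$ degeneracies rather than any conceptual difficulty: for small $t$ some of the exponents in (c)–(e) collapse (e.g. $2^{t-3}-1$ can be $0$), and one must confirm that the halving step is still valid, i.e. that the halved index is $\ge 0$ and that Lemma \ref{kvadriranje} applies (it holds for all $r\ge 0$, so this is automatic once the index is nonnegative). I would therefore organize the proof as five short inductions on $t$, in each case displaying the single algebraic line $g_{(\cdots)} = w_3 g_{(\cdots)/2\text{-ish}}^2 = w_3\cdot(\text{previous monomial})^2$, and verifying the initial value directly from Table \ref{table:2}; alternatively, one can derive (b)–(e) from (a) and repeated use of (\ref{lemspol1}), but the Lemma \ref{kvadriranje} route is cleaner since it keeps every intermediate polynomial a single monomial.
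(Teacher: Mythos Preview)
Your approach is correct and is essentially identical to the paper's own proof: induction on $t$ via Lemma~\ref{kvadriranje} (the identity $w_3 g_r^2 = g_{2r+3}$), with the base cases $t=2$ for (a)--(d) and $t=3$ for (e) read off from Table~\ref{table:2}. Apart from the stream-of-consciousness self-corrections, the argument and the computations (including your verification of the base cases for (c) and (e)) are exactly what the paper does.
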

\begin{proof}
The proofs of all equalities are by induction on $t$. From Table \ref{table:2} we see that the identities (a)--(d) are true for $t=2$, and that (e) holds for $t=3$. It is now routine to apply Lemma \ref{kvadriranje} and complete the induction step. For example,
\[g_{2^{t+1}+2^t+2^{t-1}-3}=w_3(g_{2^t+2^{t-1}+2^{t-2}-3})^2=w_3w_2^{2^t}w_3^{2^{t-1}-2}=w_2^{2^t}w_3^{2^{t-1}-1},\]
which proves (d).
\end{proof}

As another application of Lemma \ref{kvadriranje}, let us give a simple proof of the equality
\begin{equation}\label{kvadrat}
(g_{2^{t-1}-2})^2=g_{2^t-4},
\end{equation}
which holds for all integers $t\ge2$ (cf.\ \cite[Lemma 2.3]{Rusin:ChRank}). We have
\[w_3(g_{2^{t-1}-2})^2=g_{2^t-1}=w_3g_{2^t-4}+w_2g_{2^t-3}=w_3g_{2^t-4}\]
(by (\ref{recgpolk3}) and Proposition \ref{g-3}(a)), and since canceling is allowed in $\mathbb Z_2[w_2,w_3]$, we have established (\ref{kvadrat}).

\section{Gr\"obner bases}
\label{Grobner}

\subsection{Background on Gr\"obner bases}

 The theory of Gr\"obner bases has been well established for decades. It has proved itself as a valuable tool in dealing with ideals of the polynomial rings. In what follows we give some basic preliminaries from this theory, but we confine ourselves to the polynomial ring $\mathbb{Z}_2[w_2,w_3]$ (since we are going to work in $\mathbb{Z}_2[w_2,w_3]$ only). Actually, we will only define a few notions and cite a theorem that we need for the subsequent parts of the paper. A comprehensive treatment of the theory of Gr\"obner bases the reader can find in \cite{Becker}.

        The set of all monomials in $\mathbb{Z}_2[w_2,w_2]$ will be denoted by $M$. Let $\preceq$ be a well order
        on $M$ such that $m_1\preceq m_2$ implies $mm_1\preceq mm_2$ for any $m,m_1,m_2\in M$.
        For a nonzero polynomial $p=\sum_i m_i\in\mathbb Z_2[w_2,w_3]$, where $m_i$ are pairwise different monomials,
        define its \em leading monomial \em $\mathrm{LM}(p)$ as $\max_im_i$ with respect to $\preceq$.

        This already suffices for a definition of Gr\"obner basis.

\begin{definition}
    Let $F\subset\mathbb Z_2[w_2,w_3]$ be a finite set of nonzero polynomials and $I$ an ideal in $\mathbb Z_2[w_2,w_3]$. $F$ is a \em Gr\"obner basis for $I$ \em if $I$ is generated by $F$ and for any $p\in I\setminus\{0\}$ there exists $f\in F$ such that
    $\mathrm{LM}(f)\mid\mathrm{LM}(p)$.
\end{definition}

The notion of an $m$-representation (for a monomial $m\in M$) will be important to us.
    For a finite set of nonzero polynomials $F\subset \mathbb{Z}_2[w_2,w_3]$ and a monomial $m\in M$, we say that
    \[ p=\sum_{i=1}^km_if_i\]
    is an \em $m$-representation \em of a nonzero polynomial $p$ \em with respect to \em $F$ if $m_1,\ldots,m_k\in M$, $f_1,\ldots,f_k\in F$ and
    $\mathrm{LM}(m_if_i)\preceq m$ for every $i\in\{1,\ldots,k\}$ (note that it is not required for $f_i$'s to be pairwise different).

    For two monomials $m_1,m_2\in M$, we denote their least common multiply by $\mathrm{lcm}(m_1,m_2)$. If $p,q\in\mathbb{Z}_2[w_2,w_3]$ are nonzero polynomials, their \em $S$-polynomial \em is defined as:
    \begin{equation}\label{Spol}
    S(p,q)=\frac{\mathrm{lcm}\big(\mathrm{LM}(p),\mathrm{LM}(q)\big)}{\mathrm{LM}(p)}\cdot p+
        \frac{\mathrm{lcm}\big(\mathrm{LM}(p),\mathrm{LM}(q)\big)}{\mathrm{LM}(q)}\cdot q.
    \end{equation}
Note that $S(p,p)=0$ and $S(q,p)=S(p,q)$.

    The following theorem gives us a sufficient condition for a set of polynomials to be a Gr\"obner basis.

\begin{theorem}\label{baker}
    A finite set of nonzero polynomials $F\subset \mathbb{Z}_2[w_2,w_3]$ that generate an ideal $I$, is a Gr\"obner basis for $I$, if for every $f,g\in F$ we have that
    $S(f,g)$ either equals zero or has an $m$-representation with $m\prec\mathrm{lcm}\big(\mathrm{LM}(f),\mathrm{LM}(g)\big)$.
\end{theorem}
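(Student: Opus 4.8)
The plan is to prove this by a minimal-counterexample argument on $m$-representations — the standard route to Buchberger's criterion, adapted to the ground field $\mathbb Z_2$. First I would record the elementary observation that \emph{every} nonzero $p\in I$ admits an $m$-representation for \emph{some} monomial $m$: since $I$ is generated by $F$, one can write $p$ as a $\mathbb Z_2$-linear combination of products $\mu f$ with $\mu\in M$ and $f\in F$, and then $m=\max_i\mathrm{LM}(\mu_if_i)$ works. As $\preceq$ is a well order, the set of monomials $m$ for which $p$ has an $m$-representation has a $\preceq$-smallest element, which I denote $m(p)$ (and $\mathrm{LM}(p)\preceq m(p)$ automatically, since $\mathrm{LM}(p)$ occurs in any such representation). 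The whole proof then reduces to the claim that $m(p)=\mathrm{LM}(p)$ for every nonzero $p\in I$: granting this, fix an $m(p)$-representation $p=\sum_{i=1}^k m_if_i$ and compute the coefficient of $\mathrm{LM}(p)=m(p)$ on both sides. On the right this coefficient is the parity of $\#\{i:m_i\,\mathrm{LM}(f_i)=m(p)\}$, because any summand with $\mathrm{LM}(m_if_i)\prec m(p)$ contributes $0$ there; since the coefficient on the left is $1$, some index $i$ satisfies $m_i\,\mathrm{LM}(f_i)=\mathrm{LM}(p)$, whence $\mathrm{LM}(f_i)\mid\mathrm{LM}(p)$ — exactly what the definition of a Gr\"obner basis requires.

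To prove the claim I would argue by contradiction, assuming $m(p)\succ\mathrm{LM}(p)$ for some nonzero $p\in I$, and pass to an $m(p)$-representation $p=\sum_{i=1}^k m_if_i$ chosen so that $|S|$ is as small as possible, where $S=\{i:\mathrm{LM}(m_if_i)=m(p)\}$. Since $\mathrm{LM}(p)\prec m(p)$, the coefficient of $m(p)$ in $p$ is $0$, so $|S|$ is even; and $|S|\neq 0$, for otherwise all $\mathrm{LM}(m_if_i)$ lie strictly below $m(p)$, giving an $m$-representation over a smaller monomial and contradicting the minimality of $m(p)$. Hence $|S|\ge 2$; I pick distinct indices $i,j\in S$ and set $\ell=\mathrm{lcm}\big(\mathrm{LM}(f_i),\mathrm{LM}(f_j)\big)$. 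From $m_i\,\mathrm{LM}(f_i)=m(p)=m_j\,\mathrm{LM}(f_j)$ one gets $\ell\mid m(p)$, say $m(p)=n\ell$ with $n\in M$, and a direct computation from (\ref{Spol}), using that all coefficients are in $\mathbb Z_2$, gives $m_if_i+m_jf_j=n\cdot S(f_i,f_j)$.

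Now I would invoke the hypothesis on $S(f_i,f_j)$. If $S(f_i,f_j)=0$, then $m_if_i+m_jf_j=0$ and deleting these two summands leaves an $m(p)$-representation of $p$ with strictly smaller $S$ (or one witnessing a smaller $m(p)$) — a contradiction. Otherwise $S(f_i,f_j)$ has an $m'$-representation $\sum_l\mu_lg_l$ with $m'\prec\ell$; multiplying it by $n$ and using compatibility of $\preceq$ with multiplication, together with cancellativity of monomial multiplication (to keep the inequality strict), yields $n\cdot S(f_i,f_j)=\sum_l(n\mu_l)g_l$ with every term having leading monomial $\preceq nm'\prec n\ell=m(p)$. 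Substituting this expression for $m_if_i+m_jf_j$ produces another $m(p)$-representation of $p$ whose set of indices attaining $m(p)$ is contained in $S\setminus\{i,j\}$, again contradicting the minimal choice of $|S|$ (or of $m(p)$, should that set become empty). This contradiction proves the claim, and hence the theorem.

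The routine parts are the $S$-polynomial identity $m_if_i+m_jf_j=n\,S(f_i,f_j)$ and the manipulation of the inequalities under multiplication by $n$. The genuinely delicate point — and the one I would be most careful about — is the bookkeeping over $\mathbb Z_2$: cancellations can make individual monomials disappear, so the whole argument must be phrased in terms of parities of index sets rather than of single coefficients, and the descent must be set up as a lexicographic minimization (first the monomial $m(p)$, then the cardinality $|S|$) so that each reduction step is a genuine decrease and the process terminates.
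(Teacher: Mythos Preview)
Your argument is correct and is essentially the standard proof of Buchberger's criterion (specialized to $\mathbb Z_2$): a well-ordered descent on the pair $\big(m(p),|S|\big)$, using the $S$-polynomial identity $m_if_i+m_jf_j=n\cdot S(f_i,f_j)$ to shrink $S$. The bookkeeping over $\mathbb Z_2$ is handled properly --- the parity argument for $|S|$, the case $f_i=f_j$ (forcing $m_i=m_j$ and hence $m_if_i+m_jf_j=0$, covered by your ``$S(f_i,f_j)=0$'' branch), and the preservation of strict inequality under multiplication by $n$ via cancellativity are all in order.

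That said, the paper does not actually prove Theorem~\ref{baker}: it simply cites \cite[Theorem 5.64]{Becker} and moves on. So there is nothing in the paper to compare your argument against; what you have written is precisely the kind of proof one finds in that reference. Your write-up is therefore more than what the paper itself supplies, and is fine as a self-contained justification.
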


    A proof of this theorem can be found in \cite[Theorem 5.64]{Becker}.

\subsection{Gr\"obner basis for the ideal $I_n\trianglelefteq\mathbb Z_2[w_2,w_3]$}

We first fix a monomial order $\preceq$ in $\mathbb Z_2[w_2,w_3]$. We will be using lexicographic monomial ordering with $w_3\prec w_2$, that is
\[w_2^{b_1} w_3^{c_1}\preceq w_2^{b_2} w_3^{c_2} \qquad \Longleftrightarrow \qquad b_1<b_2 \quad \vee \quad (b_1=b_2 \quad \wedge \quad c_1\leq c_2).\]

Let $n\ge7$ be a fixed integer. We are looking for a Gr\"obner basis for the ideal $I_n=(g_{n-2},g_{n-1},g_n)$. If $t\ge3$ is the integer such that $2^t-1\le n<2^{t+1}-1$, we are going to work with the binary expansion of the number $n-2^t+1$:
\[n-2^t+1=\sum_{j=0}^{t-1}\alpha_j2^j.\]
We denote by $s_i$ the $i$-th partial sum $\sum_{j=0}^i\alpha_j2^j$ ($0\le i\le t-1$), and we also define $s_{-1}:=0$. Observe now the polynomials
\begin{equation}\label{fpol}
f_i=w_3^{\alpha_is_{i-1}}g_{n-2+2^i-s_i}, \quad 0\le i\le t-1.
\end{equation}
We are going to prove that $F:=\{f_0,f_1,\ldots,f_{t-1}\}$ is a Gr\"obner basis for the ideal $I_n$.

Since $s_{t-1}=n-2^t+1$, we have that
\begin{equation}\label{ft-1}
f_{t-1}=w_3^{\alpha_{t-1}s_{t-2}}g_{2^t+2^{t-1}-3}=w_3^{\alpha_{t-1}s_{t-2}+2^{t-1}-1},
\end{equation}
by Proposition \ref{g-3}(b).

Let us now compute explicitly the last two polynomials from $F$ in a few cases that will be relevant in our upcoming calculations (for $f_{t-1}$ we use (\ref{ft-1})).

\begin{example}\label{2t-1}
If $\underline{n=2^t-1}$, then $n-2^t+1=0$, and so $\alpha_i=s_i=0$ for all $i$. Therefore, by Proposition \ref{g-3}(c),
\begin{align*}
f_{t-2}&=g_{2^t-1-2+2^{t-2}}=g_{2^t+2^{t-2}-3}=w_2^{2^{t-2}}w_3^{2^{t-2}-1},\\
f_{t-1}&=w_3^{2^{t-1}-1}.
\end{align*}
\end{example}

In the cases $n=2^t+2^{t-1}-1$ and $n=2^t+2^{t-1}$ we shall need the last three polynomials from $F$.

\begin{example}\label{2t+2t-1-1}
In the case $\underline{n=2^t+2^{t-1}-1}$ we have $n-2^t+1=2^{t-1}$, which implies $\alpha_{t-3}=\alpha_{t-2}=0$ and $s_{t-3}=s_{t-2}=0$. Now we use Proposition \ref{g-3}(d,e) to calculate:
\begin{align*}
f_{t-3}&=g_{2^t+2^{t-1}-1-2+2^{t-3}}=g_{2^t+2^{t-1}+2^{t-3}-3}=w_2^{2^{t-1}+2^{t-3}}w_3^{2^{t-3}-1},\\
f_{t-2}&=g_{2^t+2^{t-1}-1-2+2^{t-2}}=g_{2^t+2^{t-1}+2^{t-2}-3}=w_2^{2^{t-1}}w_3^{2^{t-2}-1},\\
f_{t-1}&=w_3^{2^{t-1}-1}.
\end{align*}
\end{example}

\begin{example}\label{2t+2t-1}
If $\underline{n=2^t+2^{t-1}}$, then $n-2^t+1=1+2^{t-1}$, which means that $\alpha_{t-3}s_{t-4}=0$, $\alpha_{t-2}=0$, $\alpha_{t-1}=1$ and $s_{t-3}=s_{t-2}=1$. Therefore,
\begin{align*}
f_{t-3}&=g_{2^t+2^{t-1}-2+2^{t-3}-1}=g_{2^t+2^{t-1}+2^{t-3}-3}=w_2^{2^{t-1}+2^{t-3}}w_3^{2^{t-3}-1},\\
f_{t-2}&=g_{2^t+2^{t-1}-2+2^{t-2}-1}=g_{2^t+2^{t-1}+2^{t-2}-3}=w_2^{2^{t-1}}w_3^{2^{t-2}-1},\\
f_{t-1}&=w_3^{1+2^{t-1}-1}=w_3^{2^{t-1}}.
\end{align*}
\end{example}

\begin{example}\label{2t+1-2s+1+1}
Now let $\underline{n=2^{t+1}-2^{s+1}+1}$ for some $s\in\{1,2,\ldots,t-2\}$. Then we have $n-2^t+1=2^t-2^{s+1}+2=2+2^{s+1}+\cdots+2^{t-1}$, and conclude that $\alpha_{t-1}=1$, $s_{t-2}=2^{t-1}-2^{s+1}+2$. Using Proposition \ref{g-3}(d), (\ref{fpol}) and (\ref{ft-1}) we get
\begin{align*}
f_{t-2}&=w_3^{\alpha_{t-2}s_{t-3}}g_{2^{t+1}-2^{t-2}-3}\!=\!w_3^{\alpha_{t-2}s_{t-3}}g_{2^t+2^{t-1}+2^{t-2}-3}=w_2^{2^{t-1}}w_3^{\alpha_{t-2}s_{t-3}+2^{t-2}-1},\\
f_{t-1}&=w_3^{2^{t-1}-2^{s+1}+2+2^{t-1}-1}=w_3^{2^t-2^{s+1}+1}.
\end{align*}
\end{example}

\begin{example}\label{2t+1-2s}
If $\underline{n=2^{t+1}-2^s}$ for some $s\in\{1,2,\ldots,t-2\}$, then $n-2^t+1=2^t-2^s+1=1+2^s+\cdots+2^{t-1}$, which implies $\alpha_{t-2}=\alpha_{t-1}=1$, $s_{t-2}=2^{t-1}-2^s+1$, $s_{t-3}=2^{t-2}-2^s+1$, and so
\begin{align*}
f_{t-2}&=w_3^{2^{t-2}-2^s+1}g_{2^t+2^{t-1}+2^{t-2}-3}=w_2^{2^{t-1}}w_3^{2^{t-2}-2^s+1+2^{t-2}-1}=w_2^{2^{t-1}}w_3^{2^{t-1}-2^s},\\
f_{t-1}&=w_3^{2^{t-1}-2^s+1+2^{t-1}-1}=w_3^{2^t-2^s}.
\end{align*}
\end{example}

Let us now determine the leading monomials of the polynomials from $F$. Actually, we are able to calculate $\mathrm{LM}(g_r)$ for all $r$ for which this makes sense (i.e., for which $g_r\neq0$). If $r+3$ is a power of two, then we know that $g_r=0$ (Proposition \ref{g-3}(a)). If $r+3$ is not a power of two, then there exist unique integers $i,l\ge0$ such that $r+3=2^i(2l+3)$. The next lemma deals with this (nontrivial) case.

\begin{lemma}\label{vodeci}
Let $i$ and $l$ be nonnegative integers. Then $g_{2^i(2l+3)-3}\neq0$ and
\[\mathrm{LM}(g_{2^i(2l+3)-3})=w_2^{2^il}w_3^{2^i-1}.\]
\end{lemma}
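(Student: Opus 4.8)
The plan is to prove Lemma~\ref{vodeci} by induction on $i$, using Lemma~\ref{kvadriranje} to pass from $i$ to $i+1$ and a separate direct argument to handle the base case $i=0$.

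\textbf{Base case $i=0$.} Here we must show $g_{2l}\neq0$ and $\mathrm{LM}(g_{2l})=w_2^{l}$ for every $l\ge0$ with $2l+3$ not a power of two — but since $2l+3$ is always odd and $>1$, it is never a power of two, so the claim is simply: $\mathrm{LM}(g_{2l})=w_2^l$ for all $l\ge0$. I would prove this by (strong) induction on $l$ using the recurrence \eqref{recgpolk3}, which gives $g_{2l+3\cdot 2^{0}}$-type relations; more precisely, from \eqref{recgpolk3} we have $g_{2l}=w_2 g_{2l-2}+w_3 g_{2l-3}=w_2 g_{2(l-1)}+w_3 g_{2l-3}$. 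By the induction hypothesis $\mathrm{LM}(g_{2(l-1)})=w_2^{l-1}$, so $w_2 g_{2(l-1)}$ contributes the monomial $w_2^l$; since in the lexicographic order with $w_3\prec w_2$ every monomial $w_2^b w_3^c$ in $g_{2l-3}$ has $b+c<l$ (a fact about the total degree combined with the order, which I will verify from the bound $\deg\le 2l-3$ and the structure), multiplying by $w_3$ keeps all those monomials $\prec w_2^l$. Hence $w_2^l$ survives as the leading monomial and in particular $g_{2l}\neq0$. I would do the small cases $l=0,1,2$ by hand from Table~\ref{table:2} to anchor this.

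\textbf{Inductive step $i\to i+1$.} Suppose the claim holds for a fixed $i\ge0$ and all $l$. Let $r=2^{i+1}(2l+3)-3$. Applying Lemma~\ref{kvadriranje} with the index $2^i(2l+3)-3$ (which is $\ge0$), we get
\[
w_3\,\big(g_{2^i(2l+3)-3}\big)^2=g_{2\left(2^i(2l+3)-3\right)+3}=g_{2^{i+1}(2l+3)-3}.
\]
By the induction hypothesis, $g_{2^i(2l+3)-3}\neq0$ with leading monomial $w_2^{2^i l}w_3^{2^i-1}$; squaring over $\mathbb Z_2$ is a ring homomorphism, so $\mathrm{LM}\big((g_{2^i(2l+3)-3})^2\big)=\big(w_2^{2^i l}w_3^{2^i-1}\big)^2=w_2^{2^{i+1} l}w_3^{2^{i+1}-2}$, and hence the product with $w_3$ has leading monomial $w_2^{2^{i+1} l}w_3^{2^{i+1}-1}$. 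In particular the right-hand side is nonzero, giving $g_{2^{i+1}(2l+3)-3}\neq0$ and the desired leading monomial. This closes the induction.

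\textbf{Main obstacle.} The genuinely delicate point is the base case, specifically the claim that for $w_3\prec w_2$ in the lex order, every monomial appearing in $g_{2l-3}$ is strictly below $w_2^l$. The total degree of $g_{2l-3}$ is only $2l-3<2l=\deg(w_2^l)$, so after multiplying $g_{2l-3}$ by $w_3$ every resulting monomial has total degree $2l-2<2l$; but lex order is \emph{not} refined by total degree, so I cannot conclude $\prec w_2^l$ purely from degree. The clean fix is to observe that any monomial $w_2^b w_3^c$ of total degree $2b+3c\le 2l-2$ satisfies $b\le l-1<l$, hence $w_2^b w_3^c\prec w_2^l$ in our lex order (where the $w_2$-exponent is compared first). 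This makes the cancellation-free survival of $w_2^l$ automatic and removes the obstacle. One should also double-check that $w_2 g_{2(l-1)}$ and $w_3 g_{2l-3}$ share no monomial (again immediate, since the former has a monomial of $w_2$-exponent $l$ while the latter has all $w_2$-exponents $\le l-1$), so no unexpected cancellation of the leading term occurs. Once this is in place the rest is the routine induction sketched above, together with the bookkeeping that $2^i(2l+3)-3\ge0$ so that Lemmas~\ref{kvadriranje} and the hypothesis apply.
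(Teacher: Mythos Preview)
Your proof is correct and follows essentially the same route as the paper: induction on $i$, with the step coming directly from Lemma~\ref{kvadriranje} and the base case $\mathrm{LM}(g_{2l})=w_2^l$ handled by induction on $l$ via the recurrence~\eqref{recgpolk3}. The paper's base case is phrased a bit more cleanly---it simply observes that $w_2^l$ is the $\preceq$-maximal monomial in degree $2l$, so one only needs to verify (by the same induction) that its coefficient in $g_{2l}$ is nonzero---which sidesteps your cancellation discussion entirely (note also a harmless slip: $w_3 g_{2l-3}$ has degree $2l$, not $2l-2$, but your ``clean fix'' via $w_2$-exponents is what actually carries the argument and is correct).
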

\begin{proof}
We can prove this lemma by induction on $i$. For $i=0$ we need to prove
\begin{equation}\label{bazaind}
\mathrm{LM}(g_{2l})=w_2^l \quad \mbox{ for all } l\ge0.
 \end{equation}
The monomial $w_2^l$ is the greatest monomial (with respect to $\preceq$) in degree $2l$, and since $g_{2l}$ is homogeneous of this degree, it suffices to show that $w_2^l$ appears in $g_{2l}$ with nonzero coefficient. This is obviously true for small values of $l$ (see Table \ref{table:2}), and so the induction on $l$ and the identity $g_{2l}=w_2g_{2l-2}+w_3g_{2l-3}$ finishes the proof of (\ref{bazaind}).

For the induction step, assume that $i\ge1$, $l\ge0$ and that $\mathrm{LM}(g_{2^{i-1}(2l+3)-3})=w_2^{2^{i-1}l}w_3^{2^{i-1}-1}$. Then by Lemma \ref{kvadriranje} we have
\[
\mathrm{LM}(g_{2^i(2l+3)-3})=\mathrm{LM}\big(w_3(g_{2^{i-1}(2l+3)-3})^2\big)=w_3(w_2^{2^{i-1}l}w_3^{2^{i-1}-1})^2=w_2^{2^il}w_3^{2^i-1},
\]
and the proof is complete.
\end{proof}

\begin{proposition}\label{LMgi}
For $i\in\{0,1,\ldots,t-1\}$ we have $f_i\neq0$ and
\[\mathrm{LM}(f_i)=w_2^{\frac{n+1-s_i}{2}-2^i}w_3^{\alpha_is_{i-1}+2^i-1}.\]
\end{proposition}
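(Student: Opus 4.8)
The plan is to reduce the statement to Lemma~\ref{vodeci} by pinning down the $2$-adic valuation of the index of the relevant $g$-polynomial, shifted by $3$. Fix $i\in\{0,1,\ldots,t-1\}$ and set $r:=n-2+2^i-s_i$, so that $f_i=w_3^{\alpha_is_{i-1}}g_r$. Since $n+1=2^t+\sum_{j=0}^{t-1}\alpha_j2^j=2^t+s_{t-1}$ and $s_{t-1}-s_i=\sum_{j=i+1}^{t-1}\alpha_j2^j$, I would first record that
\[r+3=n+1+2^i-s_i=2^t+2^i+\sum_{j=i+1}^{t-1}\alpha_j2^j.\]

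Next I would factor out the power of two:
\[r+3=2^i\Bigl(2^{t-i}+1+\sum_{j=i+1}^{t-1}\alpha_j2^{j-i}\Bigr),\]
and observe that, as $i\le t-1$, every summand inside the bracket other than the $1$ is even, so the bracket is odd. Writing it as $2l+3$ then forces
\[l=2^{t-i-1}-1+\sum_{j=i+1}^{t-1}\alpha_j2^{j-i-1}\ge0,\]
all the exponents being nonnegative; note that this value is the same whether $\alpha_i$ is $0$ or $1$, so no case split on $\alpha_i$ is needed. Applying Lemma~\ref{vodeci} with this $i$ and $l$ (so that $r=2^i(2l+3)-3$) yields $g_r\neq0$ and $\mathrm{LM}(g_r)=w_2^{2^il}w_3^{2^i-1}$.

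Finally I would pass from $g_r$ to $f_i$ and tidy up the exponent of $w_2$. Because the chosen order is multiplicative, $\mathrm{LM}(w_3^{\alpha_is_{i-1}}g_r)=w_3^{\alpha_is_{i-1}}\,\mathrm{LM}(g_r)$, so $f_i\neq0$ and
\[\mathrm{LM}(f_i)=w_2^{2^il}w_3^{\alpha_is_{i-1}+2^i-1},\]
which is already the asserted exponent of $w_3$. For the exponent of $w_2$ one computes
\[2^il=2^{t-1}-2^i+\sum_{j=i+1}^{t-1}\alpha_j2^{j-1}=\frac12\Bigl(2^t+\sum_{j=i+1}^{t-1}\alpha_j2^j\Bigr)-2^i=\frac{n+1-s_i}{2}-2^i,\]
again using $n+1-s_i=2^t+\sum_{j=i+1}^{t-1}\alpha_j2^j$; in particular this quantity is a nonnegative integer, as it equals $2^il$.

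I do not anticipate a real obstacle: the argument is bookkeeping with binary digits together with one invocation of Lemma~\ref{vodeci}. The only slightly delicate point is the factorization step — checking that after dividing $r+3$ by $2^i$ the remaining factor is genuinely odd irrespective of $\alpha_i$, and that the resulting $l$ is a nonnegative integer so that Lemma~\ref{vodeci} applies. As a consistency check, for $i=t-1$ the sum over $j$ is empty, $l=0$, and the formula collapses to $\mathrm{LM}(f_{t-1})=w_3^{\alpha_{t-1}s_{t-2}+2^{t-1}-1}$, in agreement with~(\ref{ft-1}).
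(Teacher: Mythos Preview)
Your proof is correct and follows essentially the same route as the paper: both compute $r+3=2^t+2^i+\sum_{j=i+1}^{t-1}\alpha_j2^j$, factor out $2^i$ to identify the odd cofactor $2l+3$ with $l=2^{t-i-1}-1+\sum_{j=i+1}^{t-1}\alpha_j2^{j-i-1}$, invoke Lemma~\ref{vodeci}, and then rewrite $2^il$ as $(n+1-s_i)/2-2^i$. Your version is slightly more explicit about why the bracket is odd and why $l\ge0$, and the consistency check at $i=t-1$ is a nice addition, but there is no substantive difference in approach.
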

\begin{proof}
Since $f_i=w_3^{\alpha_is_{i-1}}g_{n-2+2^i-s_i}$, we have $\mathrm{LM}(f_i)=w_3^{\alpha_is_{i-1}}\mathrm{LM}(g_{n-2+2^i-s_i})$. Therefore, if we write $n-2+2^i-s_i$ in the form $2^i(2l+3)-3$ (for some $l\ge0$), we will be able to apply the previous lemma and thus compute $\mathrm{LM}(f_i)$.

Note first that $n+1-s_i$ is divisible by $2$. Namely, \begin{align*}
n+1-s_i&=n+1-\bigg(s_{t-1}-\sum_{j=i+1}^{t-1}\alpha_j2^j\bigg)=n+1-\bigg(n-2^t+1-\sum_{j=i+1}^{t-1}\alpha_j2^j\bigg)\\
&=2^t+\sum_{j=i+1}^{t-1}\alpha_j2^j
\end{align*}
(it is understood that this sum is zero if $i=t-1$). Now we have
\begin{align*}
        n-2+2^i-s_i&=2^t+\sum_{j=i+1}^{t-1}\alpha_j2^j+2^i-3=2^i\bigg(2^{t-i}+\sum_{j=i+1}^{t-1}\alpha_j2^{j-i}+1\bigg)-3\\
                   &=2^i\bigg(2\Big(2^{t-1-i}+\sum_{j=i+1}^{t-1}\alpha_j2^{j-i-1}-1\Big)+3\bigg)-3.
\end{align*}
So, we apply Lemma \ref{vodeci} for $l=2^{t-1-i}+\sum_{j=i+1}^{t-1}\alpha_j2^{j-i-1}-1$, and since
\[2^il=2^{t-1}+\sum_{j=i+1}^{t-1}\alpha_j2^{j-1}-2^i=\frac{2^t+\sum_{j=i+1}^{t-1}\alpha_j2^j}{2}-2^i=\frac{n+1-s_i}{2}-2^i,\]
we are done.
\end{proof}

We will also use the following notation. For a monomial $m=w_2^bw_3^c$:
\[\deg_{w_2}(m):=b, \quad \deg_{w_3}(m):=c.\]
So the degree of $m$ (in the chosen grading in $\mathbb Z_2[w_2,w_3]$) is $2\deg_{w_2}(m)+3\deg_{w_3}(m)$.

Since $s_i$ and $2^i$ increase with $i$, note that Proposition \ref{LMgi} implies
\begin{equation}\label{LMgiopada}
\deg_{w_2}\big(\mathrm{LM}(f_i)\big)>\deg_{w_2}\big(\mathrm{LM}(f_{i+1})\big), \qquad 0\leq i\leq t-2.
\end{equation}

The next lemma shows that certain polynomials which naturally appear in upcoming calculations are actually elements of $F$ (if they are nonzero).
It will help us in proving Proposition \ref{jednakost ideala} and Lemma \ref{uzastopni}. Recall that $\alpha_0,\alpha_1,\ldots,\alpha_{t-1}$ are binary digits of the number $n-2^t+1$ and $s_i=\sum_{j=0}^i\alpha_j2^j$ ($-1\le i\le t-1$).

\begin{lemma}\label{glavna1}
Let $i\in\{0,1,\ldots,t-1\}$ be an integer.
\begin{itemize}
\item[(a)] Let $z\in\{i,i+1,\ldots,t-1\}$ be the smallest integer with the property $\alpha_z=0$ (i.e., $\alpha_i=\cdots=\alpha_{z-1}=1$ and $\alpha_z=0$), if such an integer exists. Then
       \[
        g_{n-2+2^i-s_{i-1}}=
        \begin{cases}
           f_z, & \text{if $z$ exists}\\
           0, & \text{otherwise}
        \end{cases}.
        \]
\item[(b)] Let $u\in\{i,i+1,\ldots,t-1\}$ be the smallest integer with the property $\alpha_u=1$ (i.e., $\alpha_i=\cdots=\alpha_{u-1}=0$ and $\alpha_u=1$), if such an integer exists. Then
    \[
        w_3^{s_{i-1}}g_{n-2-s_{i-1}}=
        \begin{cases}
           f_u, & \text{if $u$ exists}\\
           0, & \text{otherwise}
        \end{cases}.
        \]
\end{itemize}
\end{lemma}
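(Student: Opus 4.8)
Both parts reduce, via the definition $(\ref{fpol})$ of the $f_i$ and the basic recurrences, to pinning down exactly which index $f_z$ (resp.\ $f_u$) equals. The strategy is to rewrite the left-hand side $g_{n-2+2^i-s_{i-1}}$ (resp.\ $w_3^{s_{i-1}}g_{n-2-s_{i-1}}$) by repeated application of the generalized recurrence $(\ref{lemspol1})$ — i.e.\ $g_{r+3\cdot2^j}=w_2^{2^j}g_{r+2^j}+w_3^{2^j}g_r$ — together with Lemma \ref{kvadriranje} in the form $w_3g_r^2=g_{2r+3}$ and Proposition \ref{g-3}(a) (which says $g_{2^m-3}=0$), and to keep track of the binary digits $\alpha_i,\alpha_{i+1},\ldots$ along the way. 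Since $s_{i-1}=\sum_{j=0}^{i-1}\alpha_j2^j$ and $n-2^t+1=\sum_{j=0}^{t-1}\alpha_j2^j$, the subscripts appearing are all of the shape "(something)$+2^k-3$" and the hypotheses $\alpha_i=\cdots=\alpha_{z-1}=1$, $\alpha_z=0$ (resp.\ $\alpha_i=\cdots=\alpha_{u-1}=0$, $\alpha_u=1$) are precisely what is needed to collapse the recurrence to a single surviving term.

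For part (a): set $r_i := n-2+2^i-s_{i-1}$. Using $s_{i-1}=s_i-\alpha_i2^i$ we can relate $r_i$ to $n-2+2^i-s_i$, and then induct on the gap between $i$ and $z$. If $\alpha_i=1$, then $s_{i-1}=s_i-2^i$, so $r_i=n-2+2^{i+1}-s_i$; comparing with $r_{i+1}=n-2+2^{i+1}-s_i$ (using $s_i=s_{i+1}-\alpha_{i+1}2^{i+1}$ appropriately) shows $r_i=r_{i+1}$, reducing the claim from $i$ to $i+1$. Iterating, we reach the index $z$ with $\alpha_z=0$: then $s_{z-1}=s_z$, and the subscript becomes $n-2+2^z-s_z$, which is exactly the subscript of $g$ in $f_z=w_3^{\alpha_zs_{z-1}}g_{n-2+2^z-s_z}=w_3^0\,g_{n-2+2^z-s_z}$ since $\alpha_z=0$; hence $g_{r_i}=f_z$. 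If no such $z$ exists, then $\alpha_i=\cdots=\alpha_{t-1}=1$, which forces $s_{i-1}$ and $n$ into a form where the subscript $n-2+2^i-s_{i-1}$ equals $2^m-3$ for some $m$ (concretely, $n-2^t+1$ has all top bits equal to $1$, so $n-2+2^i-s_{i-1}$ telescopes to $2^{t+1}-3$ or a similar power-minus-three), and Proposition \ref{g-3}(a) gives $0$. The bookkeeping here is a finite geometric-sum identity in the $\alpha_j2^j$, which I would verify once and reuse.

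For part (b): the multiplier $w_3^{s_{i-1}}$ is the signal to use Lemma \ref{kvadriranje} or, more directly, $(\ref{lemspol1})$ in reverse. Writing $s_{i-1}$ in binary and peeling off its lowest set bit, one applies $(\ref{lemspol1})$ to absorb $w_3^{2^j}$ factors into shifts of the subscript: $w_3^{2^j}g_r=g_{r+3\cdot2^j}+w_2^{2^j}g_{r+2^j}$, and the hypothesis $\alpha_i=\cdots=\alpha_{u-1}=0$ makes the "error" term $w_2^{2^j}g_{r+2^j}$ vanish (again by Proposition \ref{g-3}(a), because $r+2^j$ lands on a power-of-two-minus-three under the zero-digit hypothesis). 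After $u-i$ such steps one is left with $w_3^{s_{u-1}}g_{n-2-s_{u-1}}$, and since $\alpha_u=1$ this is precisely $f_u$ by $(\ref{fpol})$ (noting $s_{u-1}=s_{i-1}$ because the intermediate digits are zero, and $\alpha_us_{u-1}=s_{u-1}$). If no $u$ exists, all remaining digits are zero, $s_{i-1}=n-2^t+1$, and $n-2-s_{i-1}=2^t-3$, so $g_{n-2-s_{i-1}}=0$ by Proposition \ref{g-3}(a).

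The main obstacle is the careful index arithmetic: verifying that under the stated digit hypotheses the subscripts really do collapse to $2^m-3$ at each stage, and that the power of $w_3$ matches $\alpha_\bullet s_{\bullet-1}$ in the definition of $f_\bullet$. This is entirely elementary — it is bounded manipulation of binary expansions and the three identities $(\ref{recgpolk3})$, $(\ref{lemspol1})$, $w_3g_r^2=g_{2r+3}$ — but it is the only place where an error could creep in, so I would organize it as a clean induction on $z-i$ (resp.\ $u-i$) with the base cases $z=i$ (i.e.\ $\alpha_i=0$) and $u=i$ (i.e.\ $\alpha_i=1$) handled directly from $(\ref{fpol})$.
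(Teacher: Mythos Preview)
Your overall conclusions are right, but you are working much harder than necessary, and in part (b) your proposed mechanism is confused.

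The paper's proof uses \emph{no} polynomial recurrences at all --- neither $(\ref{recgpolk3})$, nor $(\ref{lemspol1})$, nor Lemma~\ref{kvadriranje}. Both parts are one-line index computations. For (a): if $\alpha_i=\cdots=\alpha_{z-1}=1$ and $\alpha_z=0$, then $s_z-s_{i-1}=\sum_{j=i}^{z-1}2^j=2^z-2^i$, so $n-2+2^i-s_{i-1}=n-2+2^z-s_z$, and since $\alpha_z=0$ this subscript is literally the one in $f_z=w_3^0g_{n-2+2^z-s_z}$. For (b): if $\alpha_i=\cdots=\alpha_{u-1}=0$ and $\alpha_u=1$, then $s_{u-1}=s_{i-1}$ and $s_u=s_{i-1}+2^u$, so $n-2-s_{i-1}=n-2+2^u-s_u$ and $w_3^{s_{i-1}}g_{n-2-s_{i-1}}=w_3^{\alpha_us_{u-1}}g_{n-2+2^u-s_u}=f_u$. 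The ``otherwise'' cases are exactly as you say (subscript equals $2^{t+1}-3$ or $2^t-3$, so Proposition~\ref{g-3}(a) applies).

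Your plan for (a) is essentially this, just phrased as an induction $r_i=r_{i+1}=\cdots=r_z$; that's fine, though the single geometric sum above does it in one step. Your plan for (b), however, proposes peeling off bits of $s_{i-1}$ via $(\ref{lemspol1})$ and killing error terms using the hypothesis $\alpha_i=\cdots=\alpha_{u-1}=0$. This cannot work as described: the bits of $s_{i-1}$ live at positions $0,\ldots,i-1$, whereas the zero-digit hypothesis concerns positions $i,\ldots,u-1$, so those zeros have nothing to do with the error terms your recurrence would produce. Fortunately none of that machinery is needed: the observation you make at the end (``$s_{u-1}=s_{i-1}$ because the intermediate digits are zero'') is the whole proof --- the left-hand side already \emph{is} $f_u$, no transformation required.
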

\begin{proof}
    (a) If $z$ does not exist, i.e., if $\alpha_i=\cdots=\alpha_{t-1}=1$, then $n-2^t+1=s_{t-1}=s_{i-1}+\sum_{j=i}^{t-1}2^j=s_{i-1}+2^t-2^i$, and so $n-2+2^i-s_{i-1}=2^{t+1}-3$, which means that $g_{n-2+2^i-s_{i-1}}=0$ (Proposition \ref{g-3}(a)).

    Suppose now that $z$ exists. Then $s_z-s_{i-1}=\sum_{j=i}^{z-1}2^j=2^z-2^i$, and so $n-2+2^i-s_{i-1}=n-2+2^z-s_z$, which implies:
    \[
    f_z=w_3^{\alpha_zs_{z-1}}g_{n-2+2^z-s_z}= g_{n-2+2^i-s_{i-1}}.
    \]

    (b) Similarly as in part (a), if $u$ does not exist, i.e., if $\alpha_i=\cdots=\alpha_{t-1}=0$, then $s_{i-1}=s_{t-1}=n-2^t+1$, leading to $g_{n-2-s_{i-1}}=g_{2^t-3}=0$.

    Suppose that $u$ exists. Then $\alpha_us_{u-1}=s_{u-1}=s_{i-1}$, as well as $s_u=s_{i-1}+2^u$, and so $n-2-s_{i-1}=n-2+2^u-s_u$. Therefore
    \[
        f_u=w_3^{\alpha_us_{u-1}}g_{n-2+2^u-s_u}= w_3^{s_{i-1}}g_{n-2-s_{i-1}}.
    \]
    This concludes the proof.
\end{proof}

In order to prove that the set $F=\{f_0,f_1,\ldots,f_{t-1}\}$ is a Gr\"obner basis for $I_n$, we have to show that $F$ generates the ideal $I_n$, which is the statement of the following proposition.

\begin{proposition}\label{jednakost ideala}
    We have the equality of ideals: $I_n=(F)$.
\end{proposition}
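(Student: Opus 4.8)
The plan is to establish the two inclusions $(F)\subseteq I_n$ and $I_n\subseteq(F)$ separately. For the first one it suffices to show $f_i\in I_n$ for each $i$, and the main ingredient I would use is the following elementary fact, proved by induction on $m$: \emph{if $m,r\ge0$ are integers with $r+m\ge n-2$, then $w_3^mg_r\in I_n$.} The base case $m=0$ is exactly (\ref{uskladiti}). For the inductive step, one rewrites $w_3^mg_r=w_3^{m-1}g_{r+3}+w_2\,w_3^{m-1}g_{r+1}$ (using (\ref{recgpolk3}) and the fact that we work in characteristic $2$); since $(r+1)+(m-1)=r+m\ge n-2$ and $(r+3)+(m-1)\ge n-2$, the inductive hypothesis applies to both summands. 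Now, for $f_i=w_3^{\alpha_is_{i-1}}g_{n-2+2^i-s_i}$, the sum of the exponent of $w_3$ and the index of $g$ is, using $s_i=s_{i-1}+\alpha_i2^i$,
\[(n-2+2^i-s_i)+\alpha_is_{i-1}=n-2+(1-\alpha_i)(2^i-s_{i-1})\ge n-2,\]
because $s_{i-1}\le2^i-1$; the index $n-2+2^i-s_i$ itself is nonnegative since $f_i\ne0$ by Proposition \ref{LMgi}. Hence the fact above gives $f_i\in I_n$, and therefore $(F)\subseteq I_n$.

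For the reverse inclusion I only need to check that the three generators $g_{n-2},g_{n-1},g_n$ of $I_n$ lie in $(F)$, and Lemma \ref{glavna1} is tailored for exactly this. Taking $i=0$ in Lemma \ref{glavna1}(b) (so $s_{-1}=0$) identifies $g_{n-2}=w_3^{s_{-1}}g_{n-2-s_{-1}}$ with some $f_u\in F$, unless $n=2^t-1$, in which case $g_{n-2}=g_{2^t-3}=0$ by Proposition \ref{g-3}(a); either way $g_{n-2}\in(F)$. Likewise, $i=0$ in Lemma \ref{glavna1}(a) identifies $g_{n-1}=g_{n-2+2^0-s_{-1}}$ with some $f_z\in F$ or with $0$, so $g_{n-1}\in(F)$. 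For $g_n$ I would split on $\alpha_0$: if $\alpha_0=0$ then $s_0=0$ and Lemma \ref{glavna1}(a) with $i=1$ identifies $g_n=g_{n-2+2^1-s_0}$ with some $f_z\in F$ (or $0$); if $\alpha_0=1$ then $s_0=1$, Lemma \ref{glavna1}(b) with $i=1$ gives $w_3g_{n-3}=w_3^{s_0}g_{n-2-s_0}\in(F)$, and then the recurrence $g_n=w_2g_{n-2}+w_3g_{n-3}$ (from (\ref{recgpolk3})) together with $g_{n-2}\in(F)$ yields $g_n\in(F)$. This proves $I_n\subseteq(F)$, and combining the two inclusions gives $I_n=(F)$.

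I do not expect any step here to be a serious obstacle, but the two places that require a genuine (if small) idea are the "raising" fact about $w_3^mg_r$ --- it is what makes $(F)\subseteq I_n$ go through even though the $w_3$-powers in the $f_i$ push the $g$-indices below $n-2$ --- and the extraction of $g_n$ from $(F)$ when $\alpha_0=1$, which is the only generator not visibly an element of $F$ and forces the short detour through $w_3g_{n-3}$ and the recurrence. Everything else is bookkeeping with the partial sums $s_i$ and direct appeals to Lemma \ref{glavna1}.
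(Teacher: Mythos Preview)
Your proof is correct and follows essentially the same route as the paper's. The only cosmetic difference is in the packaging of the ``raising'' fact: the paper states and proves the special case $w_3^{\nu}g_{n-2-\nu}\in I_n$ (induction on $\nu$) and then splits on $\alpha_i$ to verify $f_i\in I_n$, whereas you state the slightly more general form $w_3^{m}g_r\in I_n$ whenever $r+m\ge n-2$ and apply it uniformly to all $f_i$ without a case distinction. The reverse inclusion via Lemma~\ref{glavna1} and the recurrence for $g_n$ is identical to the paper's argument.
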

\begin{proof}
    We first prove $(F)\subseteq I_n$. By (\ref{uskladiti}) we know that $g_{n-2+\nu}\in I_n$ for all nonnegative integers $\nu$. Let us prove (by induction on $\nu$) that
    \begin{equation}\label{pripadnost}
    w_3^\nu g_{n-2-\nu}\in I_n, \quad 0\le\nu\le n-2.
    \end{equation}
The base case $\nu=0$ is trivial, so suppose $\nu\ge1$. Then by (\ref{recgpolk3}) we have
    \begin{align*}
        w_3^\nu g_{n-2-\nu}&=w_3^{\nu-1}w_3g_{n-2-\nu}=w_3^{\nu-1}(w_2g_{n-2-\nu+1}+g_{n-2-\nu+3})\\
                    &=w_2w_3^{\nu-1}g_{n-2-(\nu-1)}+w_3^{\nu-1}g_{n-2-(\nu-3)},
    \end{align*}
    and from inductive hypothesis, both summands are in $I_n$, which completes the proof of (\ref{pripadnost}).

    Now we show that $f_i\in I_n$ for $0\le i\le t-1$. If $\alpha_i=0$, then $s_i=s_{i-1}<2^i$, and so $f_i=g_{n-2+2^i-s_i}\in I_n$. If $\alpha_i=1$, then $s_i=s_{i-1}+2^i$, and so $f_i=w_3^{s_{i-1}}g_{n-2-s_{i-1}}\in I_n$ by (\ref{pripadnost}). Therefore, $(F)=(f_0,f_1,\ldots,f_{t-1})\subseteq I_n$.

   For the reverse containment it is enough to prove $g_{n-2},g_{n-1},g_n\in(F)$:
    \begin{itemize}
        \item The relation $g_{n-2}\in(F)$ follows immediately from Lemma \ref{glavna1}(b) for $i=0$.
        \item Similarly, $g_{n-1}\in(F)$ is obtained from Lemma \ref{glavna1}(a) for $i=0$.
        \item If $\alpha_0=0$, i.e., $s_0=0$, then Lemma \ref{glavna1}(a) applied to $i=1$ implies $g_n\in(F)$. If $\alpha_0=1$, then we can apply Lemma \ref{glavna1}(b) for $i=1$, and obtain $w_3g_{n-3}\in(F)$. We have already proved that $g_{n-2}\in(F)$, and so equation (\ref{recgpolk3}) gives us
        \[g_n=w_3g_{n-3}+w_2g_{n-2}\in(F).\]
    \end{itemize}
This concludes the proof of the proposition.
\end{proof}

We will prove that $F$ is a Gr\"obner basis by using Theorem \ref{baker}. Therefore, we want to calculate $S$-polynomials of polynomials from $F$. For $0\le i<j\le t-1$, we already know that $\deg_{w_2}\big(\mathrm{LM}(f_i)\big)>\deg_{w_2}\big(\mathrm{LM}(f_j)\big)$ (see (\ref{LMgiopada})), and since $\alpha_is_{i-1}+2^i\le s_{i-1}+2^i<2^j$, we have
\[\deg_{w_3}\big(\mathrm{LM}(f_i)\big)=\alpha_is_{i-1}+2^i-1<2^j-1\le\alpha_js_{j-1}+2^j-1=\deg_{w_3}\big(\mathrm{LM}(f_j)\big)\]
(see Proposition \ref{LMgi}). This means that
\begin{equation}\label{lcm}
\mathrm{lcm}\big(\mathrm{LM}(f_i),\mathrm{LM}(f_j)\big)=w_2^{\frac{n+1-s_i}{2}-2^i}w_3^{\alpha_js_{j-1}+2^j-1}, \qquad 0\le i\le j\le t-1.
\end{equation}
Now by (\ref{Spol}) and Proposition \ref{LMgi} we have
\begin{equation}\label{Spolinomij}
S(f_i,f_j)=w_3^{\alpha_js_{j-1}-\alpha_is_{i-1}+2^j-2^i}f_i+w_2^{\frac{s_j-s_i}{2}+2^j-2^i}f_j, \qquad 0\le i\le j\le t-1.
\end{equation}

Now we turn to proving that these $S$-polynomials indeed have appropriate representations. We will do this inductively: representations of $S(f_i,f_j)$ and $S(f_j,f_{j+1})$ will give us a desired representation of $S(f_i,f_{j+1})$. The following lemma establishes a relation between these polynomials.

\begin{lemma}\label{rekurentna}
    For $0\le i\le j\le t-2$ the following identity holds:
    \[S(f_i,f_{j+1})=w_3^{\alpha_{j+1}s_j-\alpha_js_{j-1}+2^j}S(f_i,f_j)+w_2^{\frac{s_j-s_i}{2}+2^j-2^i}S(f_j,f_{j+1}).\]
\end{lemma}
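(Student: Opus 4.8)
The plan is to verify the identity in Lemma~\ref{rekurentna} by a direct substitution using the explicit formula \eqref{Spolinomij} for $S$-polynomials, and then simplify the exponents using the relations among the partial sums $s_k$. Writing $S(f_i,f_{j+1})$ via \eqref{Spolinomij} gives
\[S(f_i,f_{j+1})=w_3^{\alpha_{j+1}s_j-\alpha_is_{i-1}+2^{j+1}-2^i}f_i+w_2^{\frac{s_{j+1}-s_i}{2}+2^{j+1}-2^i}f_{j+1}.\]
On the right-hand side of the claimed identity, I would expand both $S(f_i,f_j)$ and $S(f_j,f_{j+1})$ using \eqref{Spolinomij}: this produces four terms, two of which are multiples of $f_i$, one a multiple of $f_j$, and one a multiple of $f_{j+1}$. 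The term carrying $f_j$ comes with coefficient $w_3^{\alpha_{j+1}s_j-\alpha_js_{j-1}+2^j}\cdot w_2^{\frac{s_j-s_i}{2}+2^j-2^i}$ from the first summand and $w_2^{\frac{s_j-s_i}{2}+2^j-2^i}\cdot w_3^{\alpha_js_{j-1}-\alpha_js_{j-1}+2^j-2^j}=w_2^{\frac{s_j-s_i}{2}+2^j-2^i}$ (i.e.\ $S(f_j,f_j)$-type term with exponent $0$ on $f_j$ in the $w_3$-part? — more precisely from $S(f_j,f_{j+1})$ the $f_j$-term is $w_3^{\alpha_{j+1}s_j-\alpha_js_{j-1}+2^{j+1}-2^j}f_j=w_3^{\alpha_{j+1}s_j-\alpha_js_{j-1}+2^j}f_j$) from the second summand; since we are in characteristic $2$ these two contributions are identical and cancel. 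This is the crucial cancellation that makes the identity work.

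Concretely, I would organize the computation as follows. First, collect the coefficient of $f_i$ on the right-hand side: it is $w_3^{\alpha_{j+1}s_j-\alpha_js_{j-1}+2^j}\cdot w_3^{\alpha_js_{j-1}-\alpha_is_{i-1}+2^j-2^i}=w_3^{\alpha_{j+1}s_j-\alpha_is_{i-1}+2^{j+1}-2^i}$, which matches the coefficient of $f_i$ on the left. Second, collect the coefficient of $f_{j+1}$: it is $w_2^{\frac{s_j-s_i}{2}+2^j-2^i}\cdot w_2^{\frac{s_{j+1}-s_j}{2}+2^{j+1}-2^j}=w_2^{\frac{s_{j+1}-s_i}{2}+2^{j+1}-2^i}$, again matching the left-hand side. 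Third, the two $f_j$-terms, coming with coefficients $w_3^{\alpha_{j+1}s_j-\alpha_js_{j-1}+2^j}\cdot w_2^{\frac{s_j-s_i}{2}+2^j-2^i}$ and $w_2^{\frac{s_j-s_i}{2}+2^j-2^i}\cdot w_3^{\alpha_{j+1}s_j-\alpha_js_{j-1}+2^{j+1}-2^j}$, are equal and cancel modulo $2$. Adding everything up yields exactly \eqref{Spolinomij} for the pair $(i,j+1)$.

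The only points requiring care are bookkeeping: one must check that all the exponents appearing are nonnegative integers (so that the expressions are genuine monomials), which follows from $s_{i-1}\le s_j$, from $\alpha_js_{j-1}\le\alpha_{j+1}s_j+2^j$, and from $s_i\le s_j\le s_{j+1}$ together with the parity observations in the proof of Proposition~\ref{LMgi} (namely that $n+1-s_k$ is always even, so $\frac{s_j-s_i}{2}$ and $\frac{s_{j+1}-s_j}{2}$ are integers). The identity $2^j+2^j=2^{j+1}$ and $2^j+2^{j+1}-2^j=2^{j+1}$ are used freely when combining exponents. I expect no genuine obstacle here: the main (and only) subtlety is the characteristic-$2$ cancellation of the two $f_j$-terms, and once that is isolated the rest is a routine matching of exponents. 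I would present the proof simply as a chain of equalities starting from the right-hand side, substituting \eqref{Spolinomij} twice, distributing the monomial prefactors, discarding the pair of equal $f_j$-terms, and recognizing the result as $S(f_i,f_{j+1})$.
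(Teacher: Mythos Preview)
Your approach is correct and is essentially identical to the paper's own proof: both arguments expand the $S$-polynomials via \eqref{Spolinomij} and rely on the same characteristic-$2$ cancellation of the two identical $f_j$-terms, with the only difference being that the paper starts from $S(f_i,f_{j+1})$ and inserts the term $w_2^{\frac{s_j-s_i}{2}+2^j-2^i}w_3^{\alpha_{j+1}s_j-\alpha_js_{j-1}+2^j}f_j$ twice to regroup, whereas you expand the right-hand side and match coefficients. (Note a small slip in your write-up: the four terms on the right split as one $f_i$-term, two $f_j$-terms, and one $f_{j+1}$-term, not two $f_i$-terms as you initially stated; your subsequent detailed computation handles this correctly.)
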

\begin{proof}
We use (\ref{Spolinomij}) and just calculate:
    \begin{align*}
        S(f_i,f_{j+1})=&\,w_3^{\alpha_{j+1}s_j-\alpha_is_{i-1}+2^{j+1}-2^i}f_i+w_2^{\frac{s_{j+1}-s_i}{2}+2^{j+1}-2^i}f_{j+1}\\
        =&\,w_3^{\alpha_{j+1}s_j-\alpha_is_{i-1}+2^{j+1}-2^i}f_i+w_2^{\frac{s_j-s_i}{2}+2^j-2^i}w_3^{\alpha_{j+1}s_j-\alpha_js_{j-1}+2^j}f_j\\
         &+w_2^{\frac{s_j-s_i}{2}+2^j-2^i}w_3^{\alpha_{j+1}s_j-\alpha_js_{j-1}+2^j}f_j+w_2^{\frac{s_{j+1}-s_i}{2}+2^{j+1}-2^i}f_{j+1}\\
        =&\,w_3^{\alpha_{j+1}s_j-\alpha_js_{j-1}+2^j}\left(w_3^{\alpha_js_{j-1}-\alpha_is_{i-1}+2^j-2^i}f_i+w_2^{\frac{s_j-s_i}{2}+2^j-2^i}f_j\right) \\
        &+w_2^{\frac{s_j-s_i}{2}+2^j-2^i}\left(w_3^{\alpha_{j+1}s_j-\alpha_js_{j-1}+2^j}f_j+w_2^{\frac{s_{j+1}-s_j}{2}+2^j}f_{j+1}\right) \\
        =&\,w_3^{\alpha_{j+1}s_j-\alpha_js_{j-1}+2^j}S(f_i,f_j)+w_2^{\frac{s_j-s_i}{2}+2^j-2^i}S(f_j,f_{j+1}),
    \end{align*}
and we are done.
\end{proof}

We first deal with the case of consecutive polynomials from $F$.

\begin{lemma}\label{uzastopni}
   For $0\le j\le t-2$ the polynomial $S(f_j,f_{j+1})$ is either zero or has an $m$-representation with respect to $F$ such that
 \[\mathrm{deg}_{w_2}(m)<\frac{n+1-s_j}{2}-2^j.\]
\end{lemma}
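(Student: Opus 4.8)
The plan is to compute $S(f_j,f_{j+1})$ explicitly using (\ref{Spolinomij}), substitute the definitions (\ref{fpol}) of $f_j$ and $f_{j+1}$ as $w_3$-multiples of polynomials $g_r$, and then apply the recurrence (\ref{recgpolk3}) (or its generalization (\ref{lemspol1})) to rewrite the resulting combination of $g$'s as a single $g_r$, up to a $w_3$-power. Concretely, since $f_j=w_3^{\alpha_js_{j-1}}g_{n-2+2^j-s_j}$ and $f_{j+1}=w_3^{\alpha_{j+1}s_j}g_{n-2+2^{j+1}-s_{j+1}}$, the $S$-polynomial from (\ref{Spolinomij}) will be a sum of two $g$-terms multiplied by monomials, and the degrees of the two $g$-indices will differ by exactly $3\cdot 2^j$, so (\ref{lemspol1}) applies and collapses the sum to a single term of the form $(\text{monomial})\cdot g_{r}$ for an appropriate $r$. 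By Lemma \ref{glavna1}, applied with $i=j+1$, this polynomial is either $0$ or equals some $f_u$ with $u\ge j+1$; in the latter case it is an $m$-representation with $m=\mathrm{LM}$ of that single term.

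The key steps, in order, are: (i) expand $S(f_j,f_{j+1})$ via (\ref{Spolinomij}), splitting into the two cases $\alpha_j=0$ and $\alpha_j=1$ (and likewise tracking $\alpha_{j+1}$), since the exponent $\alpha_js_{j-1}$ and the relation between $s_j$ and $s_{j-1}$ depend on this; (ii) substitute (\ref{fpol}) and use the index arithmetic $n-2+2^{j+1}-s_{j+1} = (n-2+2^j-s_j) + 3\cdot 2^j + (s_j - s_{j+1} + 2^j)$ — one should check the bookkeeping so that (\ref{lemspol1}) or iterated (\ref{recgpolk3}) rewrites the combination into $w_3^{e}\,g_{n-2+2^{j}-s_{j-1}}$ or $w_3^{e}\,g_{n-2-s_{j-1}}$ for a suitable exponent $e$, matching exactly the two expressions handled by Lemma \ref{glavna1}(a) and (b); (iii) invoke Lemma \ref{glavna1} to conclude $S(f_j,f_{j+1})$ is $0$ or equals an element $f_u\in F$, $u\ge j+1$; (iv) if it equals $f_u$, write the trivial $m$-representation $S(f_j,f_{j+1})=1\cdot f_u$ and read off $\mathrm{deg}_{w_2}(m)=\deg_{w_2}(\mathrm{LM}(f_u))$; then by (\ref{LMgiopada}) and Proposition \ref{LMgi} this is strictly less than $\deg_{w_2}(\mathrm{LM}(f_j))=\frac{n+1-s_j}{2}-2^j$, which is the required bound. (If instead $S(f_j,f_{j+1})$ equals a monomial times $f_u$, one takes $m$ to be the leading monomial of that product and argues the same way, checking the $w_2$-degree has not increased past the bound.)

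The main obstacle will be step (ii): the careful index arithmetic to verify that the two $g$-terms in $S(f_j,f_{j+1})$ combine — via (\ref{recgpolk3})/(\ref{lemspol1}) — into precisely one of the two forms appearing in Lemma \ref{glavna1}, with matching $w_3$-exponents. The casework on $\alpha_j,\alpha_{j+1}\in\{0,1\}$ (four cases, but several collapse) must be organized so that in each case the polynomial $w_2^{\frac{s_j-s_i}{2}+\cdots}$-type coefficient and the $w_3$-power line up with $w_3^{s_{j-1}}g_{n-2-s_{j-1}}$ (when $\alpha_j=1$) or with $g_{n-2+2^j-s_{j-1}}$ (when $\alpha_j=0$). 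Once that identification is made, the bound on $\mathrm{deg}_{w_2}(m)$ is immediate from the monotonicity (\ref{LMgiopada}), so the only real work is the algebraic normalization and ensuring no $w_2$-exponent overshoots; everything else is formal.
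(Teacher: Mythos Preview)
Your overall plan --- expand $S(f_j,f_{j+1})$ via (\ref{Spolinomij}), substitute (\ref{fpol}), collapse the resulting combination of $g$'s via (\ref{lemspol1}), and then invoke Lemma \ref{glavna1} --- is exactly the route the paper takes. However, two of your expectations are off and would block the argument as written.

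First, the relevant case split is on $\alpha_{j+1}$, not $\alpha_j$. After substituting (\ref{fpol}) one obtains
\[
S(f_j,f_{j+1})=w_3^{\alpha_{j+1}s_j}\Big(w_3^{2^j}g_{n-2-s_j+2^j}+w_2^{(\alpha_{j+1}+1)2^j}g_{n-2-s_{j+1}+2^{j+1}}\Big),
\]
so the value of $\alpha_j$ has already disappeared. The two $g$-indices here differ by $2^j$ (not $3\cdot 2^j$), and which rewriting via (\ref{lemspol1}) applies depends entirely on whether $s_{j+1}=s_j$ or $s_{j+1}=s_j+2^{j+1}$, i.e., on $\alpha_{j+1}$.

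Second, and more importantly, your expectation that the result collapses to a \emph{single} term $(\text{monomial})\cdot f_u$ is correct only when $\alpha_{j+1}=0$: there (\ref{lemspol1}) gives $S(f_j,f_{j+1})=g_{n-2-s_{j+1}+2^{j+2}}$, and Lemma \ref{glavna1}(a) with $i=j+2$ finishes as you describe. But when $\alpha_{j+1}=1$, one needs two applications of (\ref{lemspol1}), and the outcome is a genuine \emph{two}-term expression
\[
S(f_j,f_{j+1})=w_2^{2^j}w_3^{s_j}\,g_{n-2-s_j+2^{j+1}}+w_3^{s_{j+1}}\,g_{n-2-s_{j+1}},
\]
to which Lemma \ref{glavna1}(a) (with $i=j+1$) and Lemma \ref{glavna1}(b) (with $i=j+2$) are applied separately. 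The first summand carries an extra factor $w_2^{2^j}$, so the $\deg_{w_2}$ bound does \emph{not} follow from (\ref{LMgiopada}) alone: one must check that $2^j+\deg_{w_2}\!\big(\mathrm{LM}(f_z)\big)\le 2^j+\deg_{w_2}\!\big(\mathrm{LM}(f_{j+1})\big)$ is still strictly less than $\frac{n+1-s_j}{2}-2^j$, using that $s_{j+1}>s_j$ here. Your parenthetical about ``a monomial times $f_u$'' does not cover this two-summand situation, and without anticipating it the algebraic normalization in your step (ii) will not close.
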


\begin{proof}
First we use (\ref{Spolinomij}) and (\ref{fpol}) to calculate:
    \begin{align*}
        S(f_j,f_{j+1})&=w_3^{\alpha_{j+1}s_j-\alpha_js_{j-1}+2^j}f_j+w_2^{\frac{s_{j+1}-s_j}{2}+2^j}f_{j+1}\\
                      &=w_3^{\alpha_{j+1}s_j+2^j}g_{n-2+2^j-s_j}+w_2^{\alpha_{j+1}2^j+2^j}w_3^{\alpha_{j+1}s_j}g_{n-2+2^{j+1}-s_{j+1}}\\
                      &=w_3^{\alpha_{j+1}s_j}\left(w_3^{2^j}g_{n-2-s_j+2^j}+w_2^{\alpha_{j+1}2^j+2^j}g_{n-2-s_{j+1}+2^{j+1}}\right).
    \end{align*}
We now distinguish two cases. Suppose first that $\alpha_{j+1}=0$. Then $s_j=s_{j+1}$, and using (\ref{lemspol1}) we have:
\[S(f_j,f_{j+1})=w_3^{2^j}g_{n-2-s_{j+1}+2^j}+w_2^{2^j}g_{n-2-s_{j+1}+2^{j+1}}=g_{n-2-s_{j+1}+2^{j+2}}.\]
If $j=t-2$, then $s_{j+1}=s_{t-1}=n-2^t+1$, and so $S(f_j,f_{j+1})=g_{2^{t+1}-3}=0$ (Proposition \ref{g-3}(a)).
If $j\le t-3$, then from Lemma \ref{glavna1}(a) applied to $i=j+2$, it follows that $g_{n-2-s_{j+1}+2^{j+2}}$ is equal to either zero or $f_z$ for some $z\in\{j+2,\ldots,t-1\}$.
In the latter case, we have an $m$-representation of $S(f_j,f_{j+1})$ with respect to $F$, where $m=\mathrm{LM}(f_z)$, and so
\[\deg_{w_2}(m)\le\deg_{w_2}\big(\mathrm{LM}(f_{j+2})\big)<\deg_{w_2}\big(\mathrm{LM}(f_j)\big)=\frac{n+1-s_j}{2}-2^j\]
by (\ref{LMgiopada}) and Proposition \ref{LMgi}. This completes the proof in the first case.

Next, suppose $\alpha_{j+1}=1$. Then $s_j+2^{j+1}=s_{j+1}$, and after two applications of (\ref{lemspol1}) we get:
    \begin{align*}
        S(f_j,f_{j+1})=&\,w_3^{s_j}\left(w_3^{2^j}g_{n-2-s_j+2^j}+w_2^{2^{j+1}}g_{n-2-s_{j+1}+2^{j+1}}\right)\\
        =&\,w_3^{s_j}\!\left(\!w_2^{2^j}g_{n-2-s_j+2^{j+1}}\!+\!g_{n-2-s_j+2^{j+2}}\!+w_3^{2^{j+1}}\!g_{n-2-s_{j+1}}\!\!+\!g_{n-2-s_j+2^{j+2}}\!\right)\\
        =&\,w_2^{2^j}w_3^{s_j}g_{n-2-s_j+2^{j+1}}+w_3^{s_{j+1}}g_{n-2-s_{j+1}}.
    \end{align*}
In order to show that this leads to a representation of $S(f_j,f_{j+1})$ we are looking for, it is now enough to express these two summands (actually, those which are nonzero) in the form $\overline mf$, where $\overline m\in M$ and $f\in F$ are such that $\deg_{w_2}\big(\mathrm{LM}(\overline mf)\big)<(n+1-s_j)/2-2^j$.

If the first summand is nonzero, we apply Lemma \ref{glavna1}(a) for $i=j+1$ to conclude that there is $z\in\{j+1,\ldots,t-1\}$ such that $g_{n-2-s_j+2^{j+1}}=f_z$, and then
\begin{align*}
\deg_{w_2}\!\big(\mathrm{LM}(w_2^{2^j}w_3^{s_j}f_z)\big)&=\deg_{w_2}\!\big(w_2^{2^j}w_3^{s_j}\mathrm{LM}(f_z)\big)=2^j+\deg_{w_2}\!\big(\mathrm{LM}(f_z)\big)\\
&\le2^j+\deg_{w_2}\big(\mathrm{LM}(f_{j+1})\big)\\
&=2^j+\frac{n+1-s_{j+1}}{2}-2^{j+1}<\frac{n+1-s_j}{2}-2^j.
\end{align*}

If the second summand is nonzero, then $j\le t-3$ ($j=t-2$ implies $s_{j+1}=s_{t-1}=n-2^t+1$, leading to $w_3^{s_{j+1}}g_{n-2-s_{j+1}}=w_3^{s_{j+1}}g_{2^t-3}=0$), and so we can apply Lemma \ref{glavna1}(b) for $i=j+2$ to conclude that $w_3^{s_{j+1}}g_{n-2-s_{j+1}}=f_u$ for some $u\in\{j+2,\ldots,t-1\}$. Moreover,
\[
\deg_{w_2}\big(\mathrm{LM}(f_u)\big)\le\deg_{w_2}\big(\mathrm{LM}(f_{j+2})\big)<\deg_{w_2}\big(\mathrm{LM}(f_j)\big)=\frac{n+1-s_j}{2}-2^j,
\]
which finishes the proof.
\end{proof}

Finally, we can use the previous two lemmas to get an appropriate representation for $S(f_i,f_j)$ and thus prove the main theorem of this section.

\begin{theorem}\label{Grebner}
The set $F=\{f_0,f_1,\ldots,f_{t-1}\}$ (see (\ref{fpol})) is a Gr\"obner basis for $I_n$ with respect to $\preceq$.
\end{theorem}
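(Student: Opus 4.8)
The plan is to apply Theorem~\ref{baker}: it suffices to show that for every pair $f_i,f_j\in F$ the $S$-polynomial $S(f_i,f_j)$ is either zero or admits an $m$-representation with $m\prec\mathrm{lcm}\big(\mathrm{LM}(f_i),\mathrm{LM}(f_j)\big)$. Combined with Proposition~\ref{jednakost ideala} (which already gives $I_n=(F)$), this establishes that $F$ is a Gr\"obner basis for $I_n$. Since $S(f,f)=0$ and $S(f_j,f_i)=S(f_i,f_j)$, I may assume $0\le i<j\le t-1$ throughout.

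\textbf{Key steps.} First I would record, using (\ref{lcm}) and the two monomial-degree comparisons that precede it, that
\[\deg_{w_2}\big(\mathrm{lcm}(\mathrm{LM}(f_i),\mathrm{LM}(f_j))\big)=\frac{n+1-s_i}{2}-2^i=\deg_{w_2}\big(\mathrm{LM}(f_i)\big);\]
hence, for a monomial $m$, the inequality $m\prec\mathrm{lcm}(\mathrm{LM}(f_i),\mathrm{LM}(f_j))$ will follow from the strict bound $\deg_{w_2}(m)<\tfrac{n+1-s_i}{2}-2^i$ (since in our lexicographic order with $w_3\prec w_2$, a smaller $w_2$-degree already forces $m$ to be strictly smaller, regardless of the $w_3$-exponent). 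This reduces everything to controlling $w_2$-degrees. Next, I would fix $i$ and induct on $j$ from $i$ up to $t-1$. The base case $j=i$ is trivial ($S(f_i,f_i)=0$); the case $j=i+1$ is exactly Lemma~\ref{uzastopni}, which gives an $m$-representation of $S(f_i,f_{i+1})$ with $\deg_{w_2}(m)<\tfrac{n+1-s_i}{2}-2^i$. For the inductive step, suppose $i\le j\le t-2$ and that $S(f_i,f_j)$ is either zero or has an $m_1$-representation with $\deg_{w_2}(m_1)<\tfrac{n+1-s_i}{2}-2^i$, while (again by Lemma~\ref{uzastopni}) $S(f_j,f_{j+1})$ is either zero or has an $m_2$-representation with $\deg_{w_2}(m_2)<\tfrac{n+1-s_j}{2}-2^j$. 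I then invoke Lemma~\ref{rekurentna}:
\[S(f_i,f_{j+1})=w_3^{\alpha_{j+1}s_j-\alpha_js_{j-1}+2^j}S(f_i,f_j)+w_2^{\frac{s_j-s_i}{2}+2^j-2^i}S(f_j,f_{j+1}).\]
Multiplying an $m$-representation by a monomial $\mu$ yields a $(\mu m)$-representation, so the first term contributes a representation with $w_2$-degree $\deg_{w_2}(m_1)<\tfrac{n+1-s_i}{2}-2^i$ (the factor $w_3^{\,\cdot\,}$ does not raise the $w_2$-degree — here one only needs the exponent $\alpha_{j+1}s_j-\alpha_js_{j-1}+2^j\ge0$, which holds since $\alpha_js_{j-1}\le s_{j-1}<2^j$). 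The second term contributes a representation with $w_2$-degree
\[\frac{s_j-s_i}{2}+2^j-2^i+\deg_{w_2}(m_2)<\frac{s_j-s_i}{2}+2^j-2^i+\frac{n+1-s_j}{2}-2^j=\frac{n+1-s_i}{2}-2^i.\]
Concatenating the two representations (and discarding any summand that is zero) gives an $m$-representation of $S(f_i,f_{j+1})$ with respect to $F$ in which every leading monomial has $w_2$-degree strictly less than $\tfrac{n+1-s_i}{2}-2^i$, hence strictly less than $\deg_{w_2}\big(\mathrm{lcm}(\mathrm{LM}(f_i),\mathrm{LM}(f_{j+1}))\big)$; taking $m$ to be the largest of these monomials finishes the inductive step. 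By induction on $j$, every $S(f_i,f_j)$ with $0\le i<j\le t-1$ has the required property, so Theorem~\ref{baker} applies.

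\textbf{Main obstacle.} The conceptual work has already been front-loaded into Lemmas~\ref{rekurentna} and~\ref{uzastopni} and into the $w_2$-degree formula (\ref{lcm}); the only real subtlety in assembling them is bookkeeping: one must verify that the $w_3$-exponents appearing in Lemma~\ref{rekurentna} are genuinely nonnegative (so that multiplying a representation by them is legitimate and does not inflate the $w_2$-degree) and that the strict inequality on $w_2$-degrees is preserved additively under the two operations ``multiply a representation by a monomial'' and ``concatenate two representations''. I expect this latter point — confirming that the bound $\deg_{w_2}(m)<\tfrac{n+1-s_i}{2}-2^i$ is exactly the right invariant that propagates cleanly through Lemma~\ref{rekurentna} — to be the crux, but it is a short verification rather than a deep difficulty, and with it in hand the theorem follows immediately from Theorem~\ref{baker} and Proposition~\ref{jednakost ideala}.
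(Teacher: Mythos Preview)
Your proposal is correct and follows essentially the same route as the paper's proof: fix $i$, induct on $j$ with the invariant $\deg_{w_2}(m)<\tfrac{n+1-s_i}{2}-2^i$, use Lemma~\ref{rekurentna} to split $S(f_i,f_{j+1})$, handle the first summand by the inductive hypothesis (the $w_3$-factor leaves the $w_2$-degree unchanged) and the second by Lemma~\ref{uzastopni} plus the additive check on $w_2$-degrees, and then invoke Theorem~\ref{baker}. Your explicit verification that $\alpha_{j+1}s_j-\alpha_js_{j-1}+2^j\ge0$ is a small extra care the paper leaves implicit, but otherwise the arguments are the same.
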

\begin{proof}
Let $i$ and $j$ be integers such that $0\le i\le j\le t-1$. We shall prove the following claim:
\begin{itemize}
\item $S(f_i,f_j)$ is either zero or has an $m$-representation with respect to $F$ such that
\[\mathrm{deg}_{w_2}(m)<\frac{n+1-s_i}{2}-2^i.\]
\end{itemize}
By Theorem \ref{baker}, this will prove the theorem, because then
\[m\prec w_2^{\frac{n+1-s_i}{2}-2^i}w_3^{\alpha_js_{j-1}+2^j-1}=\mathrm{lcm}\big(\mathrm{LM}(f_i),\mathrm{LM}(f_j)\big)\]
(see (\ref{lcm})).

To prove the claim, we fix $i$ and work by induction on $j$. Since $S(f_i,f_i)=0$, the base case $j=i$ is trivial. Now we assume that the claim is true for an integer $j$ such that $i\le j\le t-2$, and prove that it is true for $j+1$ as well. By Lemma \ref{rekurentna},
\begin{equation}\label{Spolinomij+1}
S(f_i,f_{j+1})=w_3^{\alpha_{j+1}s_j-\alpha_js_{j-1}+2^j}S(f_i,f_j)+w_2^{\frac{s_j-s_i}{2}+2^j-2^i}S(f_j,f_{j+1}).
\end{equation}
Note that now it suffices to prove for each of these summands that it is either zero or has an $m$-representation with respect to $F$ such that $\mathrm{deg}_{w_2}(m)<(n+1-s_i)/2-2^i$. This is clear if some of these summands is zero, and if both of them are nonzero, just add up the two representations.

For the first summand in (\ref{Spolinomij+1}), we know by induction hypothesis that if $S(f_i,f_j)$ is nonzero, then it has an $\widetilde m$-representation with $\mathrm{deg}_{w_2}(\widetilde m)<(n+1-s_i)/2-2^i$. If we multiply this representation by $w_3^{\alpha_{j+1}s_j-\alpha_js_{j-1}+2^j}$, we obtain an $m$-representation of the first summand, where $m=w_3^{\alpha_{j+1}s_j-\alpha_js_{j-1}+2^j}\cdot\widetilde m$. But then
\[\mathrm{deg}_{w_2}(m)=\mathrm{deg}_{w_2}(\widetilde m)<\frac{n+1-s_i}{2}-2^i.\]
For the second summand in (\ref{Spolinomij+1}) we use Lemma \ref{uzastopni}, which guarantees that if $S(f_j,f_{j+1})\neq0$, then $S(f_j,f_{j+1})$ has an $\widetilde m$-representation (with respect to $F$) with $\mathrm{deg}_{w_2}(\widetilde m)<(n+1-s_j)/2-2^j$. This representation multiplied by $w_2^{(s_j-s_i)/2+2^j-2^i}$ is an $m$-representation of the second summand, where $m=w_2^{(s_j-s_i)/2+2^j-2^i}\cdot\widetilde m$. Now we have
\begin{align*}
\mathrm{deg}_{w_2}(m)&=\frac{s_j-s_i}{2}+2^j-2^i+\mathrm{deg}_{w_2}(\widetilde m)<\frac{s_j-s_i}{2}+2^j-2^i+\frac{n+1-s_j}{2}-2^j\\
                     &=\frac{n+1-s_i}{2}-2^i.
\end{align*}
This completes the proof that $F=\{f_0,f_1,\ldots,f_{t-1}\}$ is a Gr\"obner basis for $I_n$.
\end{proof}

\begin{remark}
We have used Sage in order to compute these Gr\"obner basis for $I_n$ in cases $n\le64$. From that calculation we were able to conjecture how the basis should look like generally, and to successfully prove it afterwards.
\end{remark}

\begin{remark}
In \cite{Fukaya} Fukaya found Gr\"obner bases for the ideals $I_n$ when $n$ is of the form $2^t-1$. It is not hard to check that our Gr\"obner bases coincide with Fukaya's in that case. Moreover, it is readily seen from (\ref{fpol}) that our basis $F$ is the same in the case $n=2^t$ as well. This is no surprise, since $g_{2^t-3}=0$ implies $g_{2^t}=w_2g_{2^t-2}$ (by (\ref{recgpolk3})), and so
\[I_{2^t-1}=(g_{2^t-3},g_{2^t-2},g_{2^t-1})=(g_{2^t-2},g_{2^t-1})=(g_{2^t-2},g_{2^t-1},g_{2^t})=I_{2^t}.\]
\end{remark}

\section{Heights of $\widetilde w_2$ and $\widetilde w_3$}
\label{heights}

\subsection{The height of $\widetilde w_3$}

Having the Gr\"obner basis $F$ we can easily determine the height of the Stiefel--Whitney class $\widetilde w_3$, and thus prove Theorem \ref{thm3}. By (\ref{ekv}) we are actually looking for the integer $d$ with properties: $w_3^d\notin I_n$ and $w_3^{d+1}\in I_n$.

\begin{proof}[Proof of Theorem \ref{thm3}]
According to (\ref{ft-1}), $w_3^{\alpha_{t-1}s_{t-2}+2^{t-1}-1}=f_{t-1}\in I_n$. On the other hand, it is obvious that the monomial $w_3^{\alpha_{t-1}s_{t-2}+2^{t-1}-2}$ is not divisible by $\mathrm{LM}(f_{t-1})$, and it is not divisible by $\mathrm{LM}(f_i)$ for $0\le i\le t-2$ either, because $\mathrm{deg}_{w_2}\big(\mathrm{LM}(f_i)\big)>\mathrm{deg}_{w_2}\big(\mathrm{LM}(f_{t-1})\big)=0$ (by (\ref{LMgiopada})). Since $F=\{f_0,f_1,\ldots,f_{t-1}\}$ is a Gr\"obner basis, $w_3^{\alpha_{t-1}s_{t-2}+2^{t-1}-2}\notin I_n$. This means that $\height(\widetilde w_3)=\alpha_{t-1}s_{t-2}+2^{t-1}-2$, and so we are left to prove
\[\alpha_{t-1}s_{t-2}+2^{t-1}-2=\max\{2^{t-1}-2,n-2^t-1\}.\]

If $\alpha_{t-1}=0$, this amounts to proving the inequality $2^{t-1}-2\ge n-2^t-1$. But in this case $n-2^t+1=\sum_{j=0}^{t-1}\alpha_j2^j=\sum_{j=0}^{t-2}\alpha_j2^j<2^{t-1}$, and we are done.

If $\alpha_{t-1}=1$, then $\alpha_{t-1}s_{t-2}+2^{t-1}-2=s_{t-1}-2=n-2^t-1$, and we need the inequality $2^{t-1}-2\le n-2^t-1$. Now we have $n-2^t+1=\sum_{j=0}^{t-1}\alpha_j2^j\ge2^{t-1}$, and the proof is completed.
\end{proof}

\subsection{The height of $\widetilde w_2$}

This subsection is devoted to proving Theorem \ref{thm2}. For that purpose we exhibit two crucial relations in $\mathbb Z_2[w_2,w_3]$, which involve the polynomials $g_r$, $r\ge0$, and the ideals $I_n$, $n\ge2$. These relations are obtained in Propositions \ref{prva lema cong} and \ref{druga lema cong}.

We begin with two equalities in $\mathbb Z_2[w_2,w_3]$ involving polynomials $g_r$ only (the ideals $I_n$ will enter the stage afterwards). They are proved in the following two lemmas.

\begin{lemma}\label{prva lema}
The following identity holds in $\mathbb Z_2[w_2,w_3]$ for all $t\ge3$:
\[w_2^{2^{t-2}-2}g_{2^t-5}=w_3^{2^{t-1}-3}+\sum_{i=1}^{t-3}w_2^{2^{t-2}-2^{i+1}}w_3^{2^i-2}g_{2^t+2^i-3}\]
(it is understood that the sum equals zero in the case $t=3$).
\end{lemma}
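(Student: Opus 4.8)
The plan is to prove the identity by induction on $t$, using the doubling formula (\ref{lemspol1}) together with Lemma \ref{kvadriranje} to pass from the statement at level $t-1$ to the statement at level $t$. The base case $t=3$ reduces to the claim $w_2^{0}g_{11}=w_3^{1}$, i.e.\ $g_{11}=w_3$ — but wait, from Table \ref{table:2} we have $g_{11}=w_2^4w_3$, so in fact one should read the $t=3$ case as $w_2^{2^{1}-2}g_{2^3-5}=w_2^0 g_{11}=g_{11}$ on the left and $w_3^{2^2-3}=w_3$ on the right, which is false; hence the correct reading must be that the sum is \emph{not} empty as literally stated, or the base case is $t=4$. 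I would first double-check the indexing (most likely the intended base is $t=3$ with the right-hand side $w_3^{2^{t-1}-3}=w_3$ matched against a nonzero correction term, or the identity is to be read modulo the observation that both sides, after multiplication by a suitable power of $w_3$, collapse via Lemma \ref{kvadriranje}); I would settle the smallest case by direct computation from Table \ref{table:2} or its obvious extension.

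For the inductive step, assume the identity holds for $t-1$. Multiply both sides of the level-$(t-1)$ identity by $w_3$ and apply Lemma \ref{kvadriranje} in the form $w_3 g_r^2 = g_{2r+3}$: the left-hand side $w_2^{2^{t-3}-2}g_{2^{t-1}-5}$, once squared and multiplied by the correct power of $w_3$, becomes $w_2^{2^{t-2}-4}\cdot w_3 g_{2^{t-1}-5}^2 = w_2^{2^{t-2}-4} g_{2^t-7}$, and similarly each summand on the right transforms predictably. The key is that squaring in $\mathbb Z_2[w_2,w_3]$ is additive and interacts cleanly with Lemma \ref{kvadriranje}, so the whole level-$(t-1)$ identity squares termwise; one then reorganizes using the recurrence (\ref{recgpolk3}) or the doubling identity (\ref{lemspol1}) to reach exactly the level-$t$ shape, in particular to produce the new top summand $w_2^{2^{t-2}-2^{(t-3)+1}}w_3^{2^{t-3}-2}g_{2^t+2^{t-3}-3}$ and to shift $g_{2^t-5}$ correctly. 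I expect some bookkeeping: after squaring one lands at an identity for, say, $w_2^{2^{t-2}-4}g_{2^t-7}$, and one must bridge the gap between $g_{2^t-7}$ and $g_{2^t-5}$ (and between $2^{t-2}-4$ and $2^{t-2}-2$ in the $w_2$-exponent) using (\ref{recgpolk3}): $g_{2^t-5}=w_2 g_{2^t-7}+w_3 g_{2^t-8}$, so a separate small computation of $w_2^{2^{t-2}-2}g_{2^t-8}$-type terms, or of $g_{2^t-8}$ via Proposition \ref{g-3}, will be needed to absorb the discrepancy. This patching step — reconciling the shifted indices produced by squaring with the indices appearing in the target identity — is the main obstacle; everything else is a routine expansion.

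Alternatively, and perhaps more robustly, I would prove the identity directly by repeated application of (\ref{lemspol1}) to the left-hand side, peeling off one term of the sum at a time. Write $g_{2^t-5}$ and use $g_{r+3\cdot 2^i}=w_2^{2^i}g_{r+2^i}+w_3^{2^i}g_r$ with a cleverly chosen $i$ (starting with $i=t-3$ or $i=1$ depending on which end of the sum we build from) to split $w_2^{2^{t-2}-2}g_{2^t-5}$ into a piece of the form $w_2^{2^{t-2}-2^{i+1}}w_3^{2^i-2}g_{2^t+2^i-3}$ plus a remainder of the same structural type with one fewer term in the range; induction on the number of terms (equivalently a finite descent from $i=t-3$ down to $i=1$) then closes the argument, with the terminal remainder being exactly $w_3^{2^{t-1}-3}$ after one last application of Proposition \ref{g-3}(a) (the vanishing $g_{2^s-3}=0$) to kill the leftover $g$-term. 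The main thing to verify carefully here is that at each step the $w_3$-exponent $2^i-2$ is nonnegative (it is, for $i\ge 1$) and that the $w_2$-exponents $2^{t-2}-2^{i+1}$ stay nonnegative throughout the range $1\le i\le t-3$ (they do, since $2^{i+1}\le 2^{t-2}$ for $i\le t-3$), so no formal negative exponents arise; this is exactly why the sum is declared empty when $t=3$.
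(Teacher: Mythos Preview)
Your inductive approach is exactly the paper's argument; the only genuine problem is an arithmetic slip in your base case. For $t=3$ one has $2^t-5=2^3-5=3$, not $11$, so the left-hand side is $w_2^{0}g_{3}=g_3=w_3$ (Table~\ref{table:2}), which matches $w_3^{2^{2}-3}=w_3$ on the right. There is nothing wrong with the indexing and no need to move the base to $t=4$.

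For the induction step the paper does precisely what you sketch: square the level-$(t-1)$ identity, multiply by $w_3^{3}$ (that is the ``correct power of $w_3$''), and apply Lemma~\ref{kvadriranje} termwise to obtain
\[
w_2^{2^{t-2}-4}w_3^{2}g_{2^t-7}=w_3^{2^{t-1}-3}+\sum_{i=2}^{t-3}w_2^{2^{t-2}-2^{i+1}}w_3^{2^{i}-2}g_{2^t+2^{i}-3}.
\]
The bridge from $g_{2^t-7}$ to $g_{2^t-5}$ is a single application of (\ref{lemspol1}) with $j=1$, namely $w_3^{2}g_{2^t-7}=w_2^{2}g_{2^t-5}+g_{2^t-1}$; the extra term $w_2^{2^{t-2}-4}g_{2^t-1}$ is exactly the missing $i=1$ summand of the target sum. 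There is no need to invoke (\ref{recgpolk3}) twice or to analyse $g_{2^t-8}$ separately.
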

\begin{proof}
We prove this lemma by induction on $t$. The induction base (the case $t=3$) reduces to $g_3=w_3$, which we know is true (see Table \ref{table:2}).

Proceeding to the induction step, we take $t\ge4$ and suppose that the corres\-ponding equality holds for $t-1$:
\[w_2^{2^{t-3}-2}g_{2^{t-1}-5}=w_3^{2^{t-2}-3}+\sum_{i=1}^{t-4}w_2^{2^{t-3}-2^{i+1}}w_3^{2^i-2}g_{2^{t-1}+2^i-3}.\]
Squaring this identity and then multiplying by $w_3^3$ leads to
\[w_2^{2^{t-2}-4}w_3^3(g_{2^{t-1}-5})^2=w_3^{2^{t-1}-3}+\sum_{i=1}^{t-4}w_2^{2^{t-2}-2^{i+2}}w_3^{2^{i+1}-1}(g_{2^{t-1}+2^i-3})^2.\]
We now use Lemma \ref{kvadriranje} to get
\[
w_2^{2^{t-2}-4}w_3^2g_{2^t-7}=w_3^{2^{t-1}-3}+\sum_{i=1}^{t-4}w_2^{2^{t-2}-2^{i+2}}w_3^{2^{i+1}-2}g_{2^t+2^{i+1}-3}.
\]
According to (\ref{lemspol1}), $w_3^2g_{2^t-7}=w_2^2g_{2^t-5}+g_{2^t-1}$. Using this and shifting the index in the sum, we obtain
\begin{align*}
w_2^{2^{t-2}-4}w_2^2g_{2^t-5}&=w_2^{2^{t-2}-4}g_{2^t-1}+w_3^{2^{t-1}-3}+\sum_{i=2}^{t-3}w_2^{2^{t-2}-2^{i+1}}w_3^{2^i-2}g_{2^t+2^i-3}\\
                             &=w_3^{2^{t-1}-3}+\sum_{i=1}^{t-3}w_2^{2^{t-2}-2^{i+1}}w_3^{2^i-2}g_{2^t+2^i-3},
\end{align*}
and we are done.
\end{proof}

\begin{lemma}\label{druga lema}
For all integers $s$ and $t$ such that $1\le s\le t-1$ the following identity holds in $\mathbb Z_2[w_2,w_3]$:
\[w_2^{2^{s-1}-1}(g_{2^t-2^{s-1}-2})^2=\sum_{j=0}^{s-2}w_2^{2^{s-1}-2^{j+1}}w_3^{2^j-1}g_{2^{t+1}-2^s+2^j-3}\]
(it is understood that the right-hand side equals zero in the case $s=1$).
\end{lemma}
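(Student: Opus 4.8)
The plan is to prove this identity by induction on $s$, following the same strategy that worked in Lemma \ref{prva lema}: start from the instance of the identity for $s-1$, square it, multiply by an appropriate power of $w_3$, apply Lemma \ref{kvadriranje} to collapse the squares $w_3 g_r^2 = g_{2r+3}$, and then use the recurrence \eqref{lemspol1} together with Proposition \ref{g-3}(a) to absorb stray terms. The base case $s=1$ asserts that $(g_{2^t-2})^2 = 0$, which looks implausible at first glance, so I would instead take $s=2$ as the genuine base of the induction; there the claimed identity reads $w_2(g_{2^t-3})^2 = w_3^0 g_{2^{t+1}-3}$ — wait, more carefully, for $s=2$ the right-hand side is the single term ($j=0$) $w_2^{2-2}w_3^{0}g_{2^{t+1}-4+1-3}=g_{2^{t+1}-3}$, hmm — and since $g_{2^t-3}=0$ by Proposition \ref{g-3}(a) and $g_{2^{t+1}-3}=0$ as well, both sides vanish and the base case holds trivially. (The $s=1$ case should then be checked directly: the left side $(g_{2^t-2})^2$ need not vanish, so presumably the intended range is $2\le s\le t-1$, or the $s=1$ case is handled by a separate remark; I would flag this and treat $s=1$ as reducing, via Lemma \ref{kvadriranje}, to a statement about $g_{2^{t+1}-1}$.)

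For the induction step, assume the identity for $s-1$ (with the same $t$):
\[w_2^{2^{s-2}-1}(g_{2^t-2^{s-2}-2})^2=\sum_{j=0}^{s-3}w_2^{2^{s-2}-2^{j+1}}w_3^{2^j-1}g_{2^{t+1}-2^{s-1}+2^j-3}.\]
I would square this and multiply through by $w_3^3$. On the left this produces $w_2^{2^{s-1}-2}w_3^3(g_{2^t-2^{s-2}-2})^4$, and on the right each summand becomes $w_2^{2^{s-1}-2^{j+2}}w_3^{2^{j+1}+1}(g_{2^{t+1}-2^{s-1}+2^j-3})^2$. Applying Lemma \ref{kvadriranje} once on the left (to $w_3(g_{\cdot}^2)^2 = g_{\cdot}^2$ reshaped appropriately) and once inside each right-hand summand gives, after reindexing $i=j+1$,
\[w_2^{2^{s-1}-2}w_3^{?}\,g_{?}=\sum_{i=1}^{s-2}w_2^{2^{s-1}-2^{i+1}}w_3^{2^i-2}\,g_{2^{t+2}-2^s+2^i-3}.\]
The exponents of $w_3$ and the subscripts on $g$ must be tracked carefully here; this is the bookkeeping-heavy part but it is routine. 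Then I would use \eqref{lemspol1} in the form $w_3^{2}g_{r} = w_2^{2}g_{r+2}+g_{r+4}$ (or the relevant special case of \eqref{lemspol1} with a suitable power of two) to rewrite the left-hand factor so that a lone $w_2$-power multiplying $(g_{2^{t+1}-2^{s-1}-2})^2$ emerges, while the extra term generated is $g_{2^{t+2}-2^s-1}$; that term either vanishes by Proposition \ref{g-3}(a) (if $2^{t+2}-2^s-1+3 = 2^{t+2}-2^s+2$ happens to be a power of $2$ — it is not in general, so more likely it gets absorbed as the missing $i=0$ term of the sum, exactly as in Lemma \ref{prva lema} where $w_2^{2^{t-2}-4}g_{2^t-1}$ became the $i=1$ summand). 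Finally, canceling the common $w_2$-power (legal in $\mathbb Z_2[w_2,w_3]$, as noted after \eqref{kvadrat}) yields the identity for $s$.

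The main obstacle I anticipate is not conceptual but arithmetic: keeping the three coupled exponents — the $w_2$-exponent, the $w_3$-exponent, and the degree subscript on $g$ — consistent across the squaring, the two invocations of Lemma \ref{kvadriranje}, and the reindexing, and in particular verifying that the single leftover term produced by \eqref{lemspol1} is precisely the $j=0$ term that extends the summation range from $\{1,\dots,s-2\}$ back to $\{0,\dots,s-2\}$. I would organize the computation as a short display chain mirroring the proof of Lemma \ref{prva lema}, and double-check the $t$-shift (note the right-hand side has $2^{t+1}$ while the left has $2^t$, so squaring must convert a $t-1$ on the left into a $t$, consistent with $2\cdot 2^{t-1}=2^t$) at each line. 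A secondary point to settle cleanly is the degenerate behavior at $s=t-1$, where the subscript $2^t-2^{s-1}-2 = 2^t - 2^{t-2} - 2$ stays positive and nothing collapses, so no special treatment is needed there — only the low end $s\in\{1,2\}$ requires the separate check described above.
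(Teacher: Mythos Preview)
Your overall strategy (induction on $s$, square the inductive hypothesis, apply Lemma~\ref{kvadriranje}, absorb a leftover term as the $j=0$ summand) is the right one and matches the paper, but the plan as stated has two concrete errors that would derail the execution.

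\textbf{The base case $s=1$ is fine.} You miscompute the subscript: for $s=1$ we have $2^{s-1}=1$, so the left-hand side is $(g_{2^t-3})^2$, not $(g_{2^t-2})^2$. By Proposition~\ref{g-3}(a) this vanishes, and the right-hand side is the empty sum. So $s=1$ is the natural and trivially true base case, exactly as in the paper. (Your $s=2$ check is also off: the identity there reads $w_2(g_{2^t-4})^2=g_{2^{t+1}-6}$, and neither side is of the form $g_{2^{\ell}-3}$.)

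\textbf{The induction step needs a simultaneous $t$-shift.} You propose to use the hypothesis for the pair $(s-1,t)$ and then square. But squaring doubles the subscript on $g$: starting from $g_{2^{t+1}-2^{s-1}+2^j-3}$ on the right and applying Lemma~\ref{kvadriranje} after squaring produces $g_{2^{t+2}-2^s+2^{j+1}-3}$, which has $2^{t+2}$ where you need $2^{t+1}$. There is no mechanism to climb down from $2^{t+2}$ to $2^{t+1}$ afterward. The paper instead invokes the hypothesis for the pair $(s-1,t-1)$; then two passes with Lemma~\ref{kvadriranje} (first multiply by $w_3$ to turn the square on the left into $g_{2^t-2^{s-1}-1}$, then square \emph{that} identity and multiply by $w_3$) lift cleanly to the pair $(s,t)$. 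The leftover term $w_2^{2^{s-1}-2}w_3\,g_{2^{t+1}-2^s-2}$ coming from the recurrence~(\ref{recgpolk3}) applied to $g_{2^{t+1}-2^s+1}$ is precisely the $j=0$ summand (times $w_3$), and one cancels $w_3$ at the end. You actually flag this $t$-shift in your final paragraph (``squaring must convert a $t-1$ on the left into a $t$''), but that observation contradicts your stated hypothesis ``with the same $t$''; the paper's proof is just the version where you take that parenthetical remark seriously.
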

\begin{proof}
 We prove this by induction on $s$. The base case $s=1$ is just Proposition \ref{g-3}(a).
    Now suppose $(s,t)$ is a pair with $2\le s\le t-1$. By inductive hypothesis applied to $(s-1,t-1)$ the following equality holds:
    \[
        w_2^{2^{s-2}-1}(g_{2^{t-1}-2^{s-2}-2})^2=\sum_{j=0}^{s-3}w_2^{2^{s-2}-2^{j+1}}w_3^{2^j-1}g_{2^{t}-2^{s-1}+2^j-3}.
    \]
    After multiplying this equation with $w_3$ and applying Lemma \ref{kvadriranje} to its left-hand side we get:
    \begin{equation}\label{pocetna2}
        w_2^{2^{s-2}-1}g_{2^{t}-2^{s-1}-1}=\sum_{j=0}^{s-3}w_2^{2^{s-2}-2^{j+1}}w_3^{2^j}g_{2^{t}-2^{s-1}+2^j-3}.
    \end{equation}
    Next, we square the equation (\ref{pocetna2}) and multiply it by $w_3$ (in that order):
    \[w_2^{2^{s-1}-2}w_3\left(g_{2^{t}-2^{s-1}-1}\right)^2=\sum_{j=0}^{s-3}w_2^{2^{s-1}-2^{j+2}}w_3^{2^{j+1}}\cdot w_3\left(g_{2^{t}-2^{s-1}+2^j-3}\right)^2.\]
 Now we apply Lemma \ref{kvadriranje} to both sides, and shift the index in the sum:
       \[ w_2^{2^{s-1}-2}g_{2^{t+1}-2^{s}+1}=\sum_{j=1}^{s-2}w_2^{2^{s-1}-2^{j+1}}w_3^{2^{j}}g_{2^{t+1}-2^{s}+2^j-3}.
    \]
According to (\ref{recgpolk3}) and Lemma \ref{kvadriranje}, for the right-hand side we have:
    \begin{align*}
        w_2^{2^{s-1}-2}g_{2^{t+1}-2^{s}+1}&=w_2^{2^{s-1}-2}(w_3g_{2^{t+1}-2^{s}-2}+w_2g_{2^{t+1}-2^{s}-1})\\
                                        &=w_2^{2^{s-1}-2}w_3g_{2^{t+1}-2^{s}-2}+w_2^{2^{s-1}-1}w_3\left(g_{2^{t}-2^{s-1}-2}\right)^2,
    \end{align*}
    which implies
        \begin{align*}
            w_2^{2^{s-1}-1}w_3\left(g_{2^{t}-2^{s-1}-2}\right)^2\!&=w_2^{2^{s-1}-2}w_3g_{2^{t+1}-2^{s}-2}+\!\sum_{j=1}^{s-2}w_2^{2^{s-1}-2^{j+1}}w_3^{2^{j}}g_{2^{t+1}-2^{s}+2^j-3}\\
            &=\sum_{j=0}^{s-2}w_2^{2^{s-1}-2^{j+1}}w_3^{2^{j}}g_{2^{t+1}-2^{s}+2^j-3},
        \end{align*}
    Canceling out $w_3$ concludes the proof.
\end{proof}

Let us now consider the ideals $I_n\trianglelefteq\mathbb Z_2[w_2,w_3]$, $n\ge2$. Recall that these form a descending sequence, and that $I_n$ is generated by the polynomials $g_{n-2}$, $g_{n-1}$ and $g_n$. We will also work with the ideals $w_3I_n=\{w_3p\mid p\in I_n\}$, $n\ge2$. These ideals behave very nicely when it comes to squaring. That property is stated in the following lemma, which will be used extensively in the rest of the section.

\begin{lemma}\label{kvadriranje2}
Let $p\in\mathbb Z_2[w_2,w_3]$ and $n\ge2$. If $p\in w_3I_n$, then $p^2\in w_3I_{2n+1}$. In particular, the following implication holds:
\[p\in w_3I_n\,\Longrightarrow\, p^2\in w_3I_{2n}.\]
\end{lemma}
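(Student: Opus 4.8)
The plan is to reduce everything to Lemma~\ref{kvadriranje} together with the fact that the generators of $I_n$ are the polynomials $g_{n-2},g_{n-1},g_n$. Since the second (``in particular'') statement follows from the first applied with $n$ replaced by the appropriate index — if $p\in w_3I_n\subseteq w_3I_{n}$ we want $p^2\in w_3I_{2n}$, and $w_3I_{2n}\supseteq w_3I_{2n+1}$, so it suffices to prove the first assertion and then invoke the descending property $I_{2n}\supseteq I_{2n+1}$ — I will concentrate on showing: $p\in w_3I_n\Rightarrow p^2\in w_3I_{2n+1}$.

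First I would write $p=w_3 q$ with $q\in I_n$, so $q=h_{n-2}g_{n-2}+h_{n-1}g_{n-1}+h_ng_n$ for some $h_{n-2},h_{n-1},h_n\in\mathbb Z_2[w_2,w_3]$. Then, because squaring is additive and multiplicative over $\mathbb Z_2$ (the Frobenius endomorphism),
\[
p^2=w_3^2q^2=w_3^2\big(h_{n-2}^2g_{n-2}^2+h_{n-1}^2g_{n-1}^2+h_n^2g_n^2\big)=h_{n-2}^2\,w_3 g_{n-2}^2\cdot w_3+h_{n-1}^2\,w_3g_{n-1}^2\cdot w_3+h_n^2\,w_3g_n^2\cdot w_3 .
\]
Here is where Lemma~\ref{kvadriranje} enters: for each $r$ we have $w_3g_r^2=g_{2r+3}$, so $w_3g_{n-2}^2=g_{2n-1}$, $w_3g_{n-1}^2=g_{2n+1}$, and $w_3g_n^2=g_{2n+3}$. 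Hence
\[
p^2=w_3\big(h_{n-2}^2\,g_{2n-1}+h_{n-1}^2\,g_{2n+1}+h_n^2\,g_{2n+3}\big).
\]
Now $g_{2n-1}=g_{(2n+1)-2}$ and $g_{2n+1}=g_{(2n+1)-1+1}$; more to the point, by $(\ref{uskladiti})$ every $g_r$ with $r\ge (2n+1)-2=2n-1$ lies in $I_{2n+1}$, and $2n-1$, $2n+1$, $2n+3$ are all $\ge 2n-1$. Therefore the polynomial in parentheses lies in $I_{2n+1}$, so $p^2\in w_3I_{2n+1}$, which is the first assertion. For the ``in particular'' part, $p^2\in w_3I_{2n+1}\subseteq w_3I_{2n}$ since $I_{2n+1}\subseteq I_{2n}$ by the descending property of $\{I_n\}$ noted after $(\ref{uskladiti})$.

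There is essentially no hard step here; the only thing to be careful about is bookkeeping with indices — checking that all three shifted indices $2n-1,2n+1,2n+3$ are genuinely $\ge (2n+1)-2$ so that $(\ref{uskladiti})$ applies, and remembering that over $\mathbb Z_2$ cross terms in $(A+B+C)^2$ vanish. One could alternatively phrase the middle step more cleanly by noting $w_3\cdot I_n^{\,[2]}\subseteq I_{2n+1}$, where $I_n^{\,[2]}$ denotes the set of squares, but spelling out the three generators as above is the most transparent route and requires nothing beyond Lemma~\ref{kvadriranje} and $(\ref{uskladiti})$.
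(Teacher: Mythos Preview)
Your proof is correct and follows essentially the same route as the paper: write $p=w_3(h_{n-2}g_{n-2}+h_{n-1}g_{n-1}+h_ng_n)$, square using the Frobenius endomorphism over $\mathbb Z_2$, apply Lemma~\ref{kvadriranje} to each $w_3g_r^2$, and invoke~(\ref{uskladiti}) together with $I_{2n+1}\subseteq I_{2n}$. The only difference is cosmetic (your intermediate display regroups the $w_3$ factors a bit awkwardly, and you are slightly more explicit about the index check).
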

\begin{proof}
If $p\in w_3I_n$, then $p=w_3(p_{n-2}g_{n-2}+p_{n-1}g_{n-1}+p_ng_n)$ for some polynomials $p_{n-2}$, $p_{n-1}$ and $p_n$. According to Lemma \ref{kvadriranje} we have
\begin{align*}
p^2&=w_3^2(p_{n-2}^2g_{n-2}^2+p_{n-1}^2g_{n-1}^2+p_n^2g_n^2)\\
   &=w_3(p_{n-2}^2g_{2n-1}+p_{n-1}^2g_{2n+1}+p_n^2g_{2n+3})\in w_3I_{2n+1},
\end{align*}
by (\ref{uskladiti}). Since $I_{2n+1}\subseteq I_{2n}$, the second part of the lemma is now obvious.
\end{proof}

We first use Lemma \ref{kvadriranje2} to prove one of two key relations announced at the beginning of this subsection. This relation is a consequence of Lemma \ref{prva lema}.

\begin{proposition}\label{prva lema cong}
For all integers $t\ge3$ we have:
\[w_2^{2^t-4}+w_2^{2^{t-2}-1}w_3^{2^{t-1}-2}\equiv w_2^{2^{t-1}-3}g_{2^t-2}\pmod{w_3I_{2^t-1}}.\]
\end{proposition}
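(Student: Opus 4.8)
The plan is to start from the identity of Lemma~\ref{prva lema} and manipulate it modulo $w_3I_{2^t-1}$ so that all the $g$-polynomials on the right-hand side (except for one $g_{2^t-2}$ term) get absorbed into $w_3I_{2^t-1}$. First I would rewrite Lemma~\ref{prva lema} by applying the recurrence (\ref{recgpolk3}) to $g_{2^t-5}$: since $g_{2^t-2}=w_2g_{2^t-4}+w_3g_{2^t-5}$, we get $w_3g_{2^t-5}=g_{2^t-2}+w_2g_{2^t-4}$, and using (\ref{kvadrat}) in the form $g_{2^t-4}=(g_{2^{t-1}-2})^2$. Multiplying the identity of Lemma~\ref{prva lema} through by $w_3$ and substituting gives an expression relating $w_2^{2^{t-2}-2}g_{2^t-2}$ (plus a multiple of $w_2\cdot w_2^{2^{t-2}-2}(g_{2^{t-1}-2})^2$) to $w_3^{2^{t-1}-2}$ and to a sum $\sum_{i=1}^{t-3}w_2^{2^{t-2}-2^{i+1}}w_3^{2^i-1}g_{2^t+2^i-3}$.

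The key observation is that every polynomial $g_{2^t+2^i-3}$ with $1\le i\le t-3$ lies in $I_{2^t-1}$, because $2^t+2^i-3\ge 2^t-1$ for $i\ge 1$ (indeed $2^t+2-3=2^t-1$), so by (\ref{uskladiti}) each such $g_{2^t+2^i-3}\in I_{2^t-1}$. Moreover each summand in that sum carries at least one factor of $w_3$ (namely $w_3^{2^i-1}$ with $i\ge1$), hence the whole sum lies in $w_3I_{2^t-1}$ and can be discarded modulo $w_3I_{2^t-1}$. Similarly $g_{2^t-2}\in I_{2^t-1}$ and the stray term $w_2^{2^{t-2}-1}(g_{2^{t-1}-2})^2$ — after multiplication by the extra $w_3$ that I introduced — is $w_3$ times a square of an element of $I_{2^{t-1}-1}$ raised appropriately; here I would invoke Lemma~\ref{kvadriranje2}: since $w_3(g_{2^{t-1}-2})\in w_3I_{2^{t-1}-1}$ (as $g_{2^{t-1}-2}\in I_{2^{t-1}-1}$ by (\ref{uskladiti})), its square lies in $w_3I_{2(2^{t-1}-1)}=w_3I_{2^t-2}\subseteq w_3I_{2^t-1}$... wait, $I_{2^t-2}\supseteq I_{2^t-1}$, so this inclusion goes the wrong way. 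The cleaner route is to note $w_3(g_{2^{t-1}-2})^2 = g_{2^t-1}$ by Lemma~\ref{kvadriranje}, and $g_{2^t-1}\in I_{2^t-1}$ directly; then $w_2^{2^{t-2}-1}\cdot w_3(g_{2^{t-1}-2})^2 = w_2^{2^{t-2}-1}g_{2^t-1}$, which is in $I_{2^t-1}$ but I need it in $w_3I_{2^t-1}$. Using $g_{2^t-1}=w_3g_{2^t-4}+w_2g_{2^t-3}=w_3g_{2^t-4}$ (Proposition~\ref{g-3}(a)) shows $g_{2^t-1}\in w_3I_{2^t-1}$ provided $g_{2^t-4}\in I_{2^t-1}$, which again holds by (\ref{uskladiti}) since $2^t-4\ge 2^t-1$ is false — so instead $g_{2^t-4}=(g_{2^{t-1}-2})^2$ and one checks $g_{2^t-4}\in I_{2^t-1}$ is actually false in general; the correct statement is that $g_{2^t-1}=w_3g_{2^t-4}$ is an element of $w_3\mathbb{Z}_2[w_2,w_3]$ and also lies in $I_{2^t-1}$, and one shows directly it lies in $w_3I_{2^t-1}$ by tracing the generators.

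Collecting the surviving terms, the identity collapses to $w_2^{2^{t-2}-2}g_{2^t-2}\equiv w_3^{2^{t-1}-2}\pmod{w_3I_{2^t-1}}$ up to the bookkeeping above; multiplying both sides by $w_2^{2^{t-2}-1}$ (which is legitimate since $w_3I_{2^t-1}$ is an ideal) yields $w_2^{2^t-3}g_{2^t-2}\equiv w_2^{2^{t-2}-1}w_3^{2^{t-1}-2}$, and then one more application of (\ref{recgpolk3}) together with (\ref{kvadrat}) rearranges this into the stated congruence $w_2^{2^t-4}+w_2^{2^{t-2}-1}w_3^{2^{t-1}-2}\equiv w_2^{2^{t-1}-3}g_{2^t-2}\pmod{w_3I_{2^t-1}}$; specifically $w_2^{2^{t-1}-3}g_{2^t-2}=w_2^{2^{t-1}-3}(w_2g_{2^t-4}+w_3g_{2^t-5})=w_2^{2^{t-1}-2}g_{2^t-4}+w_2^{2^{t-1}-3}w_3g_{2^t-5}$ and one identifies $w_2^{2^{t-1}-2}g_{2^t-4}$ with $w_2^{2^t-4}$ modulo $w_3I_{2^t-1}$ using the fact that $g_{2^t-4}$ differs from the top monomial $w_2^{2^t-4}$ by lower terms, all of which (being homogeneous of degree $2^t-4$ and smaller in the $w_2$-degree) contain a positive power of $w_3$... this last "absorption" of $g_{2^t-4}$ into $w_2^{2^t-4}$ modulo $w_3(\cdots)$ actually needs care since $g_{2^t-4}-w_2^{2^t-4}$ need not lie in $w_3I_{2^t-1}$, only in $w_3\mathbb{Z}_2[w_2,w_3]$.

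\textbf{Main obstacle.} The delicate point is the last step: one cannot freely replace $g_{2^t-4}$ by its leading monomial $w_2^{2^t-4}$ modulo $w_3I_{2^t-1}$, because the difference lies in $w_3\mathbb{Z}_2[w_2,w_3]$ but not obviously in $w_3I_{2^t-1}$. I expect the honest proof avoids this replacement entirely and instead works directly with the identity of Lemma~\ref{prva lema}: multiply it by $w_2$, rewrite $w_2\cdot w_2^{2^{t-2}-2}g_{2^t-5}$ via (\ref{recgpolk3}) as $w_2^{2^{t-2}-2}(g_{2^t-2}-w_3g_{2^t-5})$... no — rather, keep $w_2^{2^{t-2}-2}g_{2^t-5}$ and note $w_3g_{2^t-5}=g_{2^t-2}+w_2g_{2^t-4}=g_{2^t-2}+w_2(g_{2^{t-1}-2})^2$ by (\ref{recgpolk3}) and (\ref{kvadrat}). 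Multiplying Lemma~\ref{prva lema} by $w_3$ and by $w_2^{2^{t-1}-2^{t-2}}=w_2^{2^{t-2}}$, then using that the sum on the right is in $w_3I_{2^t-1}$ (as argued above), and that $w_2^{2^t-4}\cdot w_3(g_{2^{t-1}-2})^2 = w_2^{2^t-4}g_{2^t-1}\in w_3I_{2^t-1}$ via Lemma~\ref{kvadriranje} and the relation $g_{2^t-1}=w_3g_{2^t-4}$ plus $g_{2^t-4}=(g_{2^{t-1}-2})^2\in I_{2^{t-1}-1}\subseteq$ (appropriate ideal), should close everything; tracking which power of $w_3$ is available and which ideal each term sits in is the real work, and Lemma~\ref{kvadriranje2} is the right tool whenever a square of an ideal element appears. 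The author's proof almost certainly organizes this more cleanly, but the skeleton — rewrite Lemma~\ref{prva lema}, absorb the $i\ge1$ sum into $w_3I_{2^t-1}$ using (\ref{uskladiti}), handle the leftover square via Lemmas~\ref{kvadriranje} and~\ref{kvadriranje2}, and rearrange with (\ref{recgpolk3}) and (\ref{kvadrat}) — is what I would follow.
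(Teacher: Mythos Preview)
Your approach has a genuine gap, and you correctly identify it yourself: the step where you need $w_2^{2^{t-1}-2}g_{2^t-4}\equiv w_2^{2^t-4}\pmod{w_3I_{2^t-1}}$ cannot be closed by the tools you invoke. Concretely, your manipulation of Lemma~\ref{prva lema} (multiply by $w_3$, use $w_3g_{2^t-5}=g_{2^t-2}+w_2g_{2^t-4}$, absorb the sum into $w_3I_{2^t-1}$, then multiply by $w_2^{2^{t-2}-1}$) produces
\[
w_2^{2^{t-1}-3}g_{2^t-2}+w_2^{2^{t-1}-2}g_{2^t-4}\equiv w_2^{2^{t-2}-1}w_3^{2^{t-1}-2}\pmod{w_3I_{2^t-1}},
\]
which is one step away from the goal \emph{provided} $w_2^{2^{t-1}-2}g_{2^t-4}\equiv w_2^{2^t-4}$. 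But $g_{2^t-4}-w_2^{2^{t-1}-2}$ lies only in $w_3\mathbb Z_2[w_2,w_3]$, not in $w_3I_{2^t-1}$; your attempts to force this via Lemma~\ref{kvadriranje2} or via $g_{2^t-1}=w_3g_{2^t-4}$ all run into the wrong-direction inclusions you noticed.

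The paper resolves this by induction on $t$, and the missing congruence is exactly the inductive content. Assuming the statement for $t-1$, one squares it and applies Lemma~\ref{kvadriranje2} together with (\ref{kvadrat}) to get
\[
w_2^{2^t-8}+w_2^{2^{t-2}-2}w_3^{2^{t-1}-4}\equiv w_2^{2^{t-1}-6}g_{2^t-4}\pmod{w_3I_{2^t-1}}.
\]
After multiplying by $w_2^4$, the term $w_2^{2^{t-2}+2}w_3^{2^{t-1}-4}$ is absorbed into $w_3I_{2^t-1}$ via Proposition~\ref{g-3}(c) (it equals $w_2^2w_3^{2^{t-2}-3}g_{2^t+2^{t-2}-3}$, and $2^{t-2}-3\ge1$ for $t\ge4$), yielding precisely $w_2^{2^t-4}\equiv w_2^{2^{t-1}-2}g_{2^t-4}$. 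From there, writing $w_2g_{2^t-4}=g_{2^t-2}+w_3g_{2^t-5}$ and invoking Lemma~\ref{prva lema} to handle $w_2^{2^{t-1}-3}w_3g_{2^t-5}$ (the same absorption of the $i\ge1$ sum that you carried out) finishes the argument. So your use of Lemma~\ref{prva lema} is correct and is indeed part of the proof, but it supplies only half of the picture; the other half is the squared induction hypothesis, without which the argument cannot close.
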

\begin{proof}
The proof is by induction on $t$. We see from Table \ref{table:2} that $w_2^4+w_2w_3^2= w_2g_6$, and so the proposition is true for $t=3$.

Now let $t\ge4$ and assume that the stated relation holds for $t-1$:
\[w_2^{2^{t-1}-4}+w_2^{2^{t-3}-1}w_3^{2^{t-2}-2}\equiv w_2^{2^{t-2}-3}g_{2^{t-1}-2}\pmod{w_3I_{2^{t-1}-1}}.\]
If we square this relation, according to (\ref{kvadrat}) and Lemma \ref{kvadriranje2} (its first part: $p\in w_3I_n\Rightarrow p^2\in w_3I_{2n+1}$), we get
\[
w_2^{2^t-8}+w_2^{2^{t-2}-2}w_3^{2^{t-1}-4}\equiv w_2^{2^{t-1}-6}g_{2^t-4}\pmod{w_3I_{2^t-1}}.
\]
By (\ref{recgpolk3}) we know that $w_2g_{2^t-4}=w_3g_{2^t-5}+g_{2^t-2}$. Using this and multiplying the previous congruence by $w_2^4$ we obtain
\begin{equation}\label{kkong}
w_2^{2^t-4}+w_2^{2^{t-2}+2}w_3^{2^{t-1}-4}\equiv w_2^{2^{t-1}-3}w_3g_{2^t-5}+w_2^{2^{t-1}-3}g_{2^t-2}\pmod{w_3I_{2^t-1}}.
\end{equation}
Now we apply Proposition \ref{g-3}(c) and conclude that
\[w_2^{2^{t-2}+2}w_3^{2^{t-1}-4}=w_2^2w_3^{2^{t-2}-3}w_2^{2^{t-2}}w_3^{2^{t-2}-1}=w_2^2w_3^{2^{t-2}-3}g_{2^t+2^{t-2}-3}\in w_3I_{2^t-1}\]
(by (\ref{uskladiti})). On the other hand, if we use Lemma \ref{prva lema}, we get that
\begin{align*}
w_2^{2^{t-1}-3}w_3g_{2^t-5}&=w_2^{2^{t-2}-1}w_3\bigg(w_3^{2^{t-1}-3}+\sum_{i=1}^{t-3}w_2^{2^{t-2}-2^{i+1}}w_3^{2^i-2}g_{2^t+2^i-3}\bigg)\\
&\equiv w_2^{2^{t-2}-1}w_3^{2^{t-1}-2}\pmod{w_3I_{2^t-1}}
\end{align*}
(again by (\ref{uskladiti})). Therefore, (\ref{kkong}) reduces to
\[w_2^{2^t-4}\equiv w_2^{2^{t-2}-1}w_3^{2^{t-1}-2}+w_2^{2^{t-1}-3}g_{2^t-2}\pmod{w_3I_{2^t-1}},\]
which concludes the induction step.
\end{proof}

The second key relation is straightforward from Lemma \ref{druga lema} (in the sum from that lemma, the only summand which remains is the one for $j=0$; by (\ref{uskladiti}) all other summands belong to the ideal $w_3I_{2^{t+1}-2^s}$).

\begin{proposition}\label{druga lema cong}
For all integers $s$ and $t$ such that $2\le s\le t-1$ we have:
\[w_2^{2^{s-1}-1}(g_{2^t-2^{s-1}-2})^2\equiv w_2^{2^{s-1}-2}g_{2^{t+1}-2^s-2}\pmod{w_3I_{2^{t+1}-2^s}}.\]
\end{proposition}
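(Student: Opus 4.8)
The plan is to deduce Proposition \ref{druga lema cong} directly from Lemma \ref{druga lema} by reducing the right-hand side sum modulo $w_3I_{2^{t+1}-2^s}$. Lemma \ref{druga lema}, applied to the pair $(s,t)$ with $2\le s\le t-1$, gives the exact polynomial identity
\[
w_2^{2^{s-1}-1}(g_{2^t-2^{s-1}-2})^2=\sum_{j=0}^{s-2}w_2^{2^{s-1}-2^{j+1}}w_3^{2^j-1}g_{2^{t+1}-2^s+2^j-3}.
\]
So the whole task is to analyze each summand on the right and show that only the $j=0$ term survives modulo $w_3I_{2^{t+1}-2^s}$.

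First I would isolate the $j=0$ summand: it equals $w_2^{2^{s-1}-2}w_3^{-1}g_{2^{t+1}-2^s-2}$, but $w_3^{2^0-1}=w_3^0=1$, so it is simply $w_2^{2^{s-1}-2}g_{2^{t+1}-2^s-2}$, which is exactly the right-hand side of the proposition. Next, for each $j$ with $1\le j\le s-2$ (a range that is nonempty only when $s\ge3$, and when $s=2$ there is nothing to check since the sum has only the $j=0$ term), I would observe that the summand $w_2^{2^{s-1}-2^{j+1}}w_3^{2^j-1}g_{2^{t+1}-2^s+2^j-3}$ contains the factor $w_3^{2^j-1}$, and since $j\ge1$ we have $2^j-1\ge1$, so this factor is divisible by $w_3$. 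The remaining factor $g_{2^{t+1}-2^s+2^j-3}$ has index $2^{t+1}-2^s+2^j-3$, and because $j\le s-2<s$ we get $2^j<2^s$, hence $2^{t+1}-2^s+2^j-3<2^{t+1}-3$, while also $2^{t+1}-2^s+2^j-3\ge 2^{t+1}-2^s+2-3=2^{t+1}-2^s-1=n-2$ (using $n=2^{t+1}-2^s$); thus by (\ref{uskladiti}) we have $g_{2^{t+1}-2^s+2^j-3}\in I_{2^{t+1}-2^s}$, and multiplying by the extra $w_3$ coming from $w_3^{2^j-1}$ puts the summand in $w_3I_{2^{t+1}-2^s}$.

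Putting these two observations together, the sum equals $w_2^{2^{s-1}-2}g_{2^{t+1}-2^s-2}$ plus an element of $w_3I_{2^{t+1}-2^s}$, which is precisely the claimed congruence. I do not expect any genuine obstacle here; the only point requiring slight care is the edge case $s=2$, where the sum in Lemma \ref{druga lema} reduces to its single $j=0$ term and the congruence is in fact an equality, and confirming the index bound $2^{t+1}-2^s+2^j-3\ge n-2$ so that (\ref{uskladiti}) genuinely applies to each higher summand. This is essentially the remark already made in the excerpt just before the proposition, so the proof is short.
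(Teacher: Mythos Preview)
Your proposal is correct and follows exactly the paper's own argument: isolate the $j=0$ term and use (\ref{uskladiti}) together with the $w_3$ factor to put every $j\ge1$ summand into $w_3I_{2^{t+1}-2^s}$. One tiny arithmetic slip: with $n=2^{t+1}-2^s$ the lower bound is $2^{t+1}-2^s-1=n-1$, not $n-2$, but this only strengthens the inequality needed for (\ref{uskladiti}).
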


We now establish an important relation for the proof of Theorem \ref{thm2} in the case $2^t-1\le n\le2^t+2^{t-1}$.

\begin{theorem}\label{prva polovina}
If $t\ge3$ is an integer, then
\[w_2^{2^t-3}\equiv w_2^{2^{t-2}-2}g_{2^t+2^{t-1}-2}\pmod{w_3I_{2^t+2^{t-1}}}.\]
\end{theorem}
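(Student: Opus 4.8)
The goal is to prove the congruence $w_2^{2^t-3}\equiv w_2^{2^{t-2}-2}g_{2^t+2^{t-1}-2}\pmod{w_3I_{2^t+2^{t-1}}}$ for $t\ge3$. The natural strategy is to combine the two ``key relations'' just established (Propositions \ref{prva lema cong} and \ref{druga lema cong}) and then push everything forward into the larger ideal $w_3I_{2^t+2^{t-1}}$, using the fact that this ideal is contained in $w_3I_{2^t-1}$ (since $\{I_n\}$ is descending). Concretely, I would start from Proposition \ref{prva lema cong},
\[w_2^{2^t-4}+w_2^{2^{t-2}-1}w_3^{2^{t-1}-2}\equiv w_2^{2^{t-1}-3}g_{2^t-2}\pmod{w_3I_{2^t-1}},\]
multiply through by $w_2$ to get a relation for $w_2^{2^t-3}$, and then work on rewriting the terms $w_2^{2^{t-2}}w_3^{2^{t-1}-2}$ and $w_2^{2^{t-1}-2}g_{2^t-2}$ modulo the finer ideal $w_3I_{2^t+2^{t-1}}$.

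\textbf{Handling the monomial term.} The term $w_2^{2^{t-2}}w_3^{2^{t-1}-2}$ should be expressible via one of the polynomials $g_r$ from Proposition \ref{g-3} so that it lands in $w_3I_{2^t+2^{t-1}}$. Indeed, Proposition \ref{g-3}(d) gives $g_{2^t+2^{t-1}+2^{t-2}-3}=w_2^{2^{t-1}}w_3^{2^{t-2}-1}$, and squaring-type identities (Lemma \ref{kvadriranje}) or Proposition \ref{g-3}(c), $g_{2^t+2^{t-2}-3}=w_2^{2^{t-2}}w_3^{2^{t-2}-1}$, are the candidate building blocks; multiplying such a $g_r$ by an appropriate power of $w_3$ (at least one, and with total index $\ge 2^t+2^{t-1}-2$) puts it into $w_3I_{2^t+2^{t-1}}$ via \eqref{uskladiti}. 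The bookkeeping of exponents here — matching $w_2^{2^{t-2}}w_3^{2^{t-1}-2}$ against $w_3$ times one of these explicit $g_r$'s — is the kind of routine-but-delicate computation that needs care, but there is no conceptual obstruction: $2^{t-1}-2 = (2^{t-2}-1) + (2^{t-2}-1)$, so $w_2^{2^{t-2}}w_3^{2^{t-1}-2}$ factors as $w_3^{2^{t-2}-1}\cdot(w_2^{2^{t-2}}w_3^{2^{t-2}-1}) = w_3^{2^{t-2}-1}g_{2^t+2^{t-2}-3}$, and since $2^{t-2}-1\ge 1$ this lies in $w_3I_{2^t+2^{t-2}}\subseteq w_3I_{2^t+2^{t-1}}$ once we check the subscript $2^t+2^{t-2}-3\ge (2^t+2^{t-2})-2$ holds — actually I expect the cleaner route is to use a higher $g_r$ so the ideal index is exactly right; I would verify this against Proposition \ref{g-3} directly.

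\textbf{Handling the $g_{2^t-2}$ term.} The remaining work is to show $w_2^{2^{t-1}-2}g_{2^t-2}\equiv w_2^{2^{t-2}-2}g_{2^t+2^{t-1}-2}\pmod{w_3I_{2^t+2^{t-1}}}$. This is precisely where Proposition \ref{druga lema cong} should be invoked, or Lemma \ref{druga lema} directly: applying Lemma \ref{druga lema} with suitable $s$ (I expect $s=t-1$ or $s=t-2$, so that $2^{s-1}-1$ matches an exponent like $2^{t-2}-1$ or $2^{t-3}-1$) and then reading it modulo the ideal $w_3I_{2^{t+1}-2^s}$ kills all but one summand. The target ideal $2^t+2^{t-1}=2^{t+1}-2^{t-1}$ is of the form $2^{t+1}-2^s$ with $s=t-1$, which strongly suggests using Proposition \ref{druga lema cong} with $s=t-1$: it reads $w_2^{2^{t-2}-1}(g_{2^t-2^{t-2}-2})^2\equiv w_2^{2^{t-2}-2}g_{2^{t+1}-2^{t-1}-2}\pmod{w_3I_{2^{t+1}-2^{t-1}}}$, i.e.\ $w_2^{2^{t-2}-1}(g_{2^t+2^{t-2}-2})^2\equiv w_2^{2^{t-2}-2}g_{2^t+2^{t-1}-2}$ on the right. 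So I would aim to show $w_2^{2^{t-1}-2}g_{2^t-2}\equiv w_2^{2^{t-2}-1}(g_{2^t+2^{t-2}-2})^2\pmod{w_3I_{2^t+2^{t-1}}}$, which via Lemma \ref{kvadriranje} (after multiplying by $w_3$ and canceling) or via the recurrence \eqref{lemspol1} should reduce to another instance of \eqref{uskladiti} plus a short computation with $g_r$.

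\textbf{Anticipated main obstacle.} The conceptual structure is clear — two substitutions plus ideal-membership facts — so the real difficulty is the exponent arithmetic: making sure that every error term genuinely lies in $w_3I_{2^t+2^{t-1}}$ and not merely in some coarser ideal, which requires the subscript of each $g_r$ appearing to be at least $(2^t+2^{t-1})-2$ and a factor of $w_3$ to be present. The congruence in Proposition \ref{prva lema cong} is only modulo $w_3I_{2^t-1}$, a much bigger ideal, so I cannot simply quote it; I expect instead to re-derive a refined version, or — more likely, mirroring the inductive style of the preceding proofs — to prove Theorem \ref{prva polovina} by induction on $t$ directly, with base case $t=3$ checked against Table \ref{table:2} ($w_2^5 \stackrel{?}{\equiv} w_2^0 g_{10} = w_2^5$, trivially true modulo $w_3I_{12}$), and the induction step obtained by squaring the relation for $t-1$, applying \eqref{kvadrat} and Lemma \ref{kvadriranje2} to control how the ideal index doubles (from $2^{t-1}+2^{t-2}$ to $2^t+2^{t-1}$ via $p\in w_3I_n\Rightarrow p^2\in w_3I_{2n+1}$), multiplying by an appropriate power of $w_2$, and then cleaning up the resulting extra terms with Propositions \ref{prva lema cong}, \ref{druga lema cong} and \ref{g-3}. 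The squaring approach is attractive because $2(2^{t-1}+2^{t-2})+? $ lines up nicely and because this is exactly the technique used in the proof of Proposition \ref{prva lema cong} itself.
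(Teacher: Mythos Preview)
Your final paragraph lands on exactly the paper's proof: induction on $t$, base case $w_2^5=g_{10}$, induction step by squaring via Lemma~\ref{kvadriranje2} (so $w_3I_{2^{t-1}+2^{t-2}}$ becomes $w_3I_{2^t+2^{t-1}}$), multiplying by $w_2^3$, and then applying Proposition~\ref{druga lema cong} with $s=t-1$.

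Two remarks. First, the proof is cleaner than you anticipate: after squaring and multiplying by $w_2^3$ you obtain $w_2^{2^t-3}\equiv w_2^{2^{t-2}-1}(g_{2^{t-1}+2^{t-2}-2})^2$, and since $2^{t-1}+2^{t-2}=2^t-2^{t-2}$, Proposition~\ref{druga lema cong} with $s=t-1$ finishes immediately --- no cleanup with Proposition~\ref{prva lema cong} or Proposition~\ref{g-3} is needed, and your initial plan of starting from Proposition~\ref{prva lema cong} can be discarded entirely. Second, there is a sign slip in your restatement of Proposition~\ref{druga lema cong}: the squared factor is $(g_{2^t-2^{t-2}-2})^2=(g_{2^{t-1}+2^{t-2}-2})^2$, not $(g_{2^t+2^{t-2}-2})^2$; this is exactly the polynomial that appears after squaring the inductive hypothesis, so the pieces match without further work.
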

\begin{proof}
We prove the theorem by induction on $t$. From Table \ref{table:2} we see that $w_2^5=g_{10}$, and conclude that the relation holds for $t=3$.

Now, for $t\ge4$, assuming
\[w_2^{2^{t-1}-3}\equiv w_2^{2^{t-3}-2}g_{2^{t-1}+2^{t-2}-2}\pmod{w_3I_{2^{t-1}+2^{t-2}}},\]
we use Lemma \ref{kvadriranje2} to obtain
\[w_2^{2^t-6}\equiv w_2^{2^{t-2}-4}(g_{2^{t-1}+2^{t-2}-2})^2\pmod{w_3I_{2^t+2^{t-1}}}.\]
Multiplying this congruence by $w_2^3$ and using Proposition \ref{druga lema cong} (for $s=t-1$) we get
\begin{align*}
w_2^{2^t-3}&\equiv w_2^{2^{t-2}-1}(g_{2^{t-1}+2^{t-2}-2})^2=w_2^{2^{t-2}-1}(g_{2^t-2^{t-2}-2})^2\\
   &\equiv w_2^{2^{t-2}-2}g_{2^{t+1}-2^{t-1}-2}=w_2^{2^{t-2}-2}g_{2^t+2^{t-1}-2}\pmod{w_3I_{2^t+2^{t-1}}},
\end{align*}
and the induction step is completed.
\end{proof}

The following theorem will be essential in determining the height of $\widetilde w_2$ for $n$ in the second half of the interval $[2^t-1,2^{t+1}-1)$.

\begin{theorem}\label{druga polovina}
Let $s$ and $t$ be integers such that $1\le s\le t-2$. Then
\[w_2^{2^{t+1}-3\cdot2^s-1}+w_2^{2^{t-1}-1}w_3^{2^t-2^{s+1}}\equiv w_2^{2^t-2^{s+1}-2^{s-1}}g_{2^{t+1}-2^s-2}\pmod{w_3I_{2^{t+1}-2^s}}.\]
\end{theorem}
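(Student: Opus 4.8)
The plan is to prove Theorem \ref{druga polovina} by induction on $s$, keeping $t$ free (so the inductive step will relate the statement for $(s,t)$ to the statement for $(s-1,t-1)$, mirroring the structure of Lemma \ref{druga lema} and Proposition \ref{druga lema cong}). For the base case $s=1$ the right-hand side has exponent $2^t-2^2-1 = 2^t - 5$ on $w_2$ multiplying $g_{2^{t+1}-2-2} = g_{2^{t+1}-4}$, and the left-hand side is $w_2^{2^{t+1}-7} + w_2^{2^{t-1}-1}w_3^{2^t-2}$; I would verify this directly, presumably by invoking Proposition \ref{prva lema cong} (with $t$ replaced by $t+1$), since Proposition \ref{prva lema cong} reads $w_2^{2^{t+1}-4} + w_2^{2^{t-1}-1}w_3^{2^t-2} \equiv w_2^{2^t-3}g_{2^{t+1}-2} \pmod{w_3 I_{2^{t+1}-1}}$, and then using $I_{2^{t+1}-1} \supseteq I_{2^{t+1}-2}$ together with the recurrence $w_2 g_{2^{t+1}-4} = w_3 g_{2^{t+1}-5} + g_{2^{t+1}-2}$ and Lemma \ref{prva lema} / Proposition \ref{g-3}(c) to absorb the stray terms into $w_3 I_{2^{t+1}-2}$, exactly as in the induction step of Proposition \ref{prva lema cong}. (One should double-check whether the clean base case is actually $s=1$ or whether it is more convenient to start from $s=2$ using Proposition \ref{druga lema cong} directly; the exponents suggest $s=1$ corresponds to the Proposition \ref{prva lema cong} relation while $s\ge 2$ is governed by Proposition \ref{druga lema cong}.)

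For the inductive step, assume the congruence for $(s-1,t-1)$:
\[w_2^{2^t-3\cdot2^{s-1}-1}+w_2^{2^{t-2}-1}w_3^{2^{t-1}-2^s}\equiv w_2^{2^{t-1}-2^s-2^{s-2}}g_{2^t-2^{s-1}-2}\pmod{w_3I_{2^t-2^{s-1}}}.\]
Square this and apply Lemma \ref{kvadriranje2} (first part: $p\in w_3I_n \Rightarrow p^2 \in w_3 I_{2n+1}$, hence $a priori$ into $w_3 I_{2(2^t-2^{s-1})} = w_3 I_{2^{t+1}-2^s}$). Squaring the left side gives $w_2^{2^{t+1}-3\cdot2^s-2} + w_2^{2^{t-1}-2}w_3^{2^t-2^{s+1}}$ (the cross term vanishes in characteristic $2$), and squaring the right side gives $w_2^{2^t-2^{s+1}-2^{s-1}}(g_{2^t-2^{s-1}-2})^2$. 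Now multiply the whole congruence by $w_2$ to restore the odd exponents, and on the right-hand side invoke Proposition \ref{druga lema cong} with that same $s$ to replace $w_2^{2^{s-1}-1}(g_{2^t-2^{s-1}-2})^2$ by $w_2^{2^{s-1}-2}g_{2^{t+1}-2^s-2}$ modulo $w_3 I_{2^{t+1}-2^s}$ — this matches because $w_2 \cdot w_2^{2^t-2^{s+1}-2^{s-1}} = w_2^{2^t-2^{s+1}-2^{s-1}+1}$ and one needs to peel off a factor of $w_2^{2^{s-1}-1}$, so a bookkeeping check that $2^t - 2^{s+1} - 2^{s-1} + 1 \geq 2^{s-1}-1$ (valid since $s \le t-2$) is required. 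The exponent arithmetic should then collapse: $w_2^{2^{t+1}-3\cdot2^s-1} + w_2^{2^{t-1}-1}w_3^{2^t-2^{s+1}} \equiv w_2^{2^t-2^{s+1}-2^{s-1}}g_{2^{t+1}-2^s-2}$, which is exactly the claim for $(s,t)$.

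The main obstacle I anticipate is not conceptual but careful exponent bookkeeping: one must check at every stage that all $w_2$-exponents produced after squaring and multiplying by $w_2$ are nonnegative and that they line up precisely with the exponents $2^{s-1}-1$, $2^{s-1}-2$, etc.\ demanded by Proposition \ref{druga lema cong}, and that the ideal indices track correctly through Lemma \ref{kvadriranje2} (the jump $n \mapsto 2n+1$ versus $n \mapsto 2n$). A secondary subtlety is confirming that the base case really is clean — in particular, for $s=1$ one should verify that the two \enquote{error} monomials arising from $w_3 g_{2^{t+1}-5}$ and from the $w_2^{2^{t-1}-1}w_3^{2^t-2}$ term of Proposition \ref{prva lema cong} both lie in $w_3 I_{2^{t+1}-2}$ and not merely in $w_3 I_{2^{t+1}-1}$, which is where Lemma \ref{prva lema} together with Proposition \ref{g-3}(c) and the recurrence \eqref{recgpolk3} come into play exactly as in the proof of Proposition \ref{prva lema cong}. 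Once the base case is pinned down, the induction is a routine (if lengthy) manipulation driven entirely by Lemma \ref{kvadriranje2}, Proposition \ref{druga lema cong}, and the defining recurrence.
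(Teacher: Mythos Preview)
Your induction step is correct and essentially identical to the paper's: assume the statement for $(s-1,t-1)$, square via Lemma \ref{kvadriranje2}, multiply by $w_2$, and invoke Proposition \ref{druga lema cong}. The exponent bookkeeping works out exactly as you describe.

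The base case, however, contains an arithmetic slip that sends you down the wrong path. For $s=1$ the $w_3$-exponent on the left is $2^t - 2^{s+1} = 2^t - 4$, not $2^t - 2$; so the target congruence is
\[w_2^{2^{t+1}-7}+w_2^{2^{t-1}-1}w_3^{2^t-4}\equiv w_2^{2^t-5}g_{2^{t+1}-4}\pmod{w_3I_{2^{t+1}-2}}.\]
Because of this slip you reached for Proposition \ref{prva lema cong} with $t$ replaced by $t+1$ (whose $w_3$-exponent is indeed $2^t-2$), but that congruence does not match the corrected target: the $w_2$-exponents, the $w_3$-exponent, and the index of $g$ are all off, and the manipulation you sketch via the recurrence \eqref{recgpolk3} and Lemma \ref{prva lema} would not close those gaps cleanly. (Also, the inclusion you wrote should read $I_{2^{t+1}-1}\subseteq I_{2^{t+1}-2}$, since the sequence $\{I_n\}$ is descending.)

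The paper's base case is much simpler and in fact uses \emph{the same mechanism as your induction step}: square Proposition \ref{prva lema cong} for $t$ (not $t+1$) using Lemma \ref{kvadriranje2}, replace $(g_{2^t-2})^2$ by $g_{2^{t+1}-4}$ via identity \eqref{kvadrat}, and multiply by $w_2$. No stray terms arise at all --- the base case is a one-liner once you recognize that \eqref{kvadrat} plays for $s=1$ the role that Proposition \ref{druga lema cong} plays for $s\ge2$.
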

\begin{proof}
The proof is by induction on $s$. So, we first establish the relation for $s=1$ (and arbitrary $t\ge3$). We start off by squaring the relation obtained in Proposition \ref{prva lema cong}:
\[w_2^{2^{t+1}-8}+w_2^{2^{t-1}-2}w_3^{2^t-4}\equiv w_2^{2^t-6}(g_{2^t-2})^2\pmod{w_3I_{2^{t+1}-2}}\]
(by Lemma \ref{kvadriranje2}). We know that $(g_{2^t-2})^2=g_{2^{t+1}-4}$ (see (\ref{kvadrat})). Inser\-ting this in the previous congruence and multiplying by $w_2$, we obtain
\[w_2^{2^{t+1}-7}+w_2^{2^{t-1}-1}w_3^{2^t-4}\equiv w_2^{2^t-5}g_{2^{t+1}-4}\pmod{w_3I_{2^{t+1}-2}},\]
and this is the desired relation in the case $s=1$.

Proceeding to the induction step, let $s\ge2$, $t\ge s+2$, and suppose that the theorem is true for the pair of integers $(s-1,t-1)$:
\[w_2^{2^t-3\cdot2^{s-1}-1}+w_2^{2^{t-2}-1}w_3^{2^{t-1}-2^s}\equiv w_2^{2^{t-1}-2^s-2^{s-2}}g_{2^t-2^{s-1}-2}\pmod{w_3I_{2^t-2^{s-1}}}.\]
Similarly as in the induction base, we use Lemma \ref{kvadriranje2} to square this congruence, and then multiply by $w_2$:
\[w_2^{2^{t+1}-3\cdot2^s-1}+w_2^{2^{t-1}-1}w_3^{2^t-2^{s+1}}\equiv w_2^{2^t-2^{s+1}-2^{s-1}+1}(g_{2^t-2^{s-1}-2})^2\pmod{w_3I_{2^{t+1}-2^s}}.\]
Finally, according to Proposition \ref{druga lema cong} we have
\begin{align*}
w_2^{2^t-2^{s+1}-2^{s-1}+1}(g_{2^t-2^{s-1}-2})^2&=w_2^{2^t-2^{s+1}-2^s+2}w_2^{2^{s-1}-1}(g_{2^t-2^{s-1}-2})^2\\
                                              &\equiv w_2^{2^t-2^{s+1}-2^s+2}w_2^{2^{s-1}-2}g_{2^{t+1}-2^s-2}\\
                                              &=w_2^{2^t-2^{s+1}-2^{s-1}}g_{2^{t+1}-2^s-2}\pmod{w_3I_{2^{t+1}-2^s}},
\end{align*}
completing the proof.
\end{proof}

Now we have all that we need for the proof of Theorem \ref{thm2}.

\begin{proof}[Proof of Theorem \ref{thm2}] Let $n\ge7$ and $t\ge3$ be integers such that $2^t-1\le n<2^{t+1}-1$.

\medskip

In the case $2^t-1\le n\le2^t+2^{t-1}$ we need to verify that $\widetilde w_2^{2^t-4}\neq0$ and $\widetilde w_2^{2^t-3}=0$ in $H^*(\widetilde G_{n,3})$, i.e., $w_2^{2^t-4}\notin I_n$ and $w_2^{2^t-3}\in I_n$ (see (\ref{ekv})). Since $I_{2^t-1}\supseteq I_n\supseteq I_{2^t+2^{t-1}}$, it suffices to prove that $w_2^{2^t-4}\notin I_{2^t-1}$ and $w_2^{2^t-3}\in I_{2^t+2^{t-1}}$.

By Proposition \ref{prva lema cong} we have $w_2^{2^t-4}+w_2^{2^{t-2}-1}w_3^{2^{t-1}-2}\in I_{2^t-1}$, i.e.,
\begin{equation}\label{prva lema cong 2}
w_2^{2^t-4}\equiv w_2^{2^{t-2}-1}w_3^{2^{t-1}-2}\pmod{I_{2^t-1}}.
\end{equation}
In Theorem \ref{Grebner} we have a Gr\"obner basis $F=\{f_0,f_1,\ldots,f_{t-1}\}$ for the ideal $I_{2^t-1}$. From Example \ref{2t-1} we see that the monomial $w_2^{2^{t-2}-1}w_3^{2^{t-1}-2}$ is not divisible by $\mathrm{LM}(f_{t-1})$ and $\mathrm{LM}(f_{t-2})$. Since $\deg_{w_2}\big(\mathrm{LM}(f_i)\big)$ decreases with $i$ (see (\ref{LMgiopada})), for $0\le i\le t-3$ we have $\deg_{w_2}\big(\mathrm{LM}(f_i)\big)>\deg_{w_2}\big(\mathrm{LM}(f_{t-2})\big)=2^{t-2}$. This means that $w_2^{2^{t-2}-1}w_3^{2^{t-1}-2}$ is not divisible by any $\mathrm{LM}(f_i)$, $0\le i\le t-1$. Since $F$ is a Gr\"obner basis, we conclude $w_2^{2^{t-2}-1}w_3^{2^{t-1}-2}\notin I_{2^t-1}$, which implies (by (\ref{prva lema cong 2})) that $w_2^{2^t-4}\notin I_{2^t-1}$.

The fact $w_2^{2^t-3}\in I_{2^t+2^{t-1}}$ is immediate from Theorem \ref{prva polovina}.

\medskip

In the case $2^t+2^{t-1}<n<2^{t+1}-1$ let $s\in\{1,2,\ldots,t-2\}$ be the (unique) integer such that $2^{t+1}-2^{s+1}+1\le n\le2^{t+1}-2^s$. We want to show that $\widetilde w_2^{2^{t+1}-3\cdot2^s-1}\neq0$ and $\widetilde w_2^{2^{t+1}-3\cdot2^s}=0$ in $H^*(\widetilde G_{n,3})$. Similarly as in the previous case, since $I_{2^{t+1}-2^{s+1}+1}\supseteq I_n\supseteq I_{2^{t+1}-2^s}$, we actually need to prove that $w_2^{2^{t+1}-3\cdot2^s-1}\notin I_{2^{t+1}-2^{s+1}+1}$ and $w_2^{2^{t+1}-3\cdot2^s}\in I_{2^{t+1}-2^s}$. We do this by using Theorem \ref{druga polovina}. Since $g_{2^{t+1}-2^s-2}\in I_{2^{t+1}-2^s}$ (and of course, $w_3I_{2^{t+1}-2^s}\subseteq I_{2^{t+1}-2^s}$), this theorem implies
\begin{equation}\label{druga lema cong 2}
w_2^{2^{t+1}-3\cdot2^s-1}+w_2^{2^{t-1}-1}w_3^{2^t-2^{s+1}}\in I_{2^{t+1}-2^s}\subseteq I_{2^{t+1}-2^{s+1}+1}.
\end{equation}

From Example \ref{2t+1-2s+1+1} we see that the monomial $w_2^{2^{t-1}-1}w_3^{2^t-2^{s+1}}$ is not divisible by any of the leading monomials $\mathrm{LM}(f_i)$, $0\le i\le t-1$, from the Gr\"obner basis $F$ of the ideal $I_{2^{t+1}-2^{s+1}+1}$ (as in the previous case, $\deg_{w_2}\big(\mathrm{LM}(f_i)\big)>2^{t-1}$ for $0\le i\le t-3$). This means that $w_2^{2^{t-1}-1}w_3^{2^t-2^{s+1}}\notin I_{2^{t+1}-2^{s+1}+1}$, and consequently, $w_2^{2^{t+1}-3\cdot2^s-1}\notin I_{2^{t+1}-2^{s+1}+1}$ (by (\ref{druga lema cong 2})).

In order to prove $w_2^{2^{t+1}-3\cdot2^s}\in I_{2^{t+1}-2^s}$, we multiply (\ref{druga lema cong 2}) by $w_2$ and obtain
\[w_2^{2^{t+1}-3\cdot2^s}\equiv w_2^{2^{t-1}}w_3^{2^t-2^{s+1}}\pmod{I_{2^{t+1}-2^s}}.\]
So it suffices to show that $w_2^{2^{t-1}}w_3^{2^t-2^{s+1}}\in I_{2^{t+1}-2^s}$. By looking at the Gr\"obner basis $F$ for $I_{2^{t+1}-2^s}$ (Example \ref{2t+1-2s}) we see that \[w_2^{2^{t-1}}w_3^{2^t-2^{s+1}}=w_2^{2^{t-1}}w_3^{2^{t-1}-2^s}w_3^{2^{t-1}-2^s}=w_3^{2^{t-1}-2^s}f_{t-2}\in I_{2^{t+1}-2^s},\]
and the proof is complete.
\end{proof}

\section{Cup-length of $\widetilde G_{n,3}$}
\label{cup-length}

A positive dimensional cohomology class is \em indecomposable \em if it cannot be written as a polynomial in classes of smaller dimension. It is clear that the cup-length is reached by a product of indecomposable classes. A well-known fact is that the Grassmannian $\widetilde G_{n,3}$ is simply connected, which implies that the Stiefel--Whitney classes $\widetilde w_2$ and $\widetilde w_3$ are indecomposable in $H^*(\widetilde G_{n,3})$. So $\cupp_{\mathbb Z_2}(\widetilde G_{n,3})$ is reached by a product of the form
\begin{equation}\label{cuplength1}
\widetilde w_2^b\widetilde w_3^cx_1x_2\cdots x_m
\end{equation}
for some nonnegative integers $b$, $c$ and $m$, where $x_1,x_2,\ldots,x_m$ are some indecomposable classes other than $\widetilde w_2$ and $\widetilde w_3$. Let us also note that the dimension of the monomial (\ref{cuplength1}) must be equal to the dimension of the manifold $\widetilde G_{n,3}$, that is $3n-9$. Namely, otherwise, by Poincar\'e duality there would exist a (posi\-tive dimensional) class $y$ such that $\widetilde w_2^b\widetilde w_3^cx_1x_2\cdots x_my\neq0$ in $H^{3n-9}(\widetilde G_{n,3})$, and we would have a longer nontrivial cup product.

We will also need the following well-known fact (see e.g.\ \cite[p.\ 1171]{Korbas:ChRank}):
\begin{equation}\label{topdim}
\widetilde w_2^b\widetilde w_3^c\neq0\mbox{ in } H^*(\widetilde G_{n,3}) \quad \Longrightarrow \quad 2b+3c<3n-9
\end{equation}
(i.e., the nonzero class in $H^{3n-9}(\widetilde G_{n,3})$ is not a polynomial in $\widetilde w_2$ and $\widetilde w_3$). A consequence of (\ref{topdim}) and the preceding discussion is that a monomial of the form $\widetilde w_2^b$ does not realize the cup-length, and so
\[\cupp_{\mathbb Z_2}(\widetilde G_{n,3})>\height(\widetilde w_2).\]

Recall that the \em characteristic rank \em of the canonical bundle $\widetilde\gamma_{n,3}$, denoted by $\mathrm{charrank}(\widetilde{\gamma}_{n,3})$, is the greatest integer $d$ with the property that for all $q\le d$ every cohomology class in $H^q(\widetilde G_{n,3})$ is a polynomial in Stiefel--Whitney classes $\widetilde w_2$ and $\widetilde w_3$ of $\widetilde\gamma_{n,3}$. Put in other words, the smallest dimension containing an indecomposable class other than $\widetilde w_2$ and $\widetilde w_3$ is $1+\mathrm{charrank}(\widetilde{\gamma}_{n,3})$. It is known (see \cite[Theorem 1]{PPR:ChRank} or \cite[Theorem A]{BasuChakraborty}) that if $t\ge3$ is the integer such that $2^t-1\le n<2^{t+1}-1$, then
\[\mathrm{charrank}(\widetilde{\gamma}_{n,3})=\min\{3n-2^{t+1}-2,2^{t+1}-5\}
=\begin{cases}
3n-2^{t+1}-2, &\!\! n<2^t-1+2^t/3\\
2^{t+1}-5, &\!\! n>2^t-1+2^t/3
\end{cases}.\]

The following lemma is now immediate.

\begin{lemma}\label{monom}
Let $n\ge7$ and $t\ge3$ be integers such that $2^t-1\le n<2^{t+1}-1$, let $x\in H^*(\widetilde G_{n,3})$ be a (homogeneous) class that is not a polynomial in $\widetilde w_2$ and $\widetilde w_3$, and let $|x|$ denotes its (cohomological) dimension.
\begin{itemize}
\item[(a)] If $n<2^t-1+2^t/3$, then $|x|\ge3n-2^{t+1}-1$.
\item[(b)] If $n>2^t-1+2^t/3$, then $|x|\ge2^{t+1}-4$.
\end{itemize}
\end{lemma}

In parts (a) and (b) of the next lemma we strengthen the assertion (\ref{topdim}). The part (c) will be used in the proof of Theorem \ref{thm1}. The main point of (c) is the existence of a nonzero monomial \em in cohomological dimension \em $3n-2^{t+1}-5$ (if the stated conditions are satisfied).

\begin{lemma}\label{nonzeromonomials}
Let $n\ge7$ and $t\ge3$ be integers such that $2^t-1\le n<2^{t+1}-1$, and let $\widetilde w_2^b\widetilde w_3^c$ be a nonzero monomial in $H^*(\widetilde G_{n,3})$.
\begin{itemize}
\item[(a)] If $n<2^t-1+2^t/3$, then $2b+3c\le2^{t+1}-8$.
\item[(b)] If $n>2^t-1+2^t/3$, then $2b+3c\le3n-2^{t+1}-5$.
\item[(c)] If $2^t-1+2^t/3<n\le2^t+2^{t-1}$ and $2^{t+1}-8<2b+3c\le3n-2^{t+1}-5$, then there exist nonnegative integers $k$ and $l$ such that
           $\widetilde w_2^{b+k}\widetilde w_3^{c+l}\neq0$ and $2(b+k)+3(c+l)=3n-2^{t+1}-5$.
\end{itemize}
\end{lemma}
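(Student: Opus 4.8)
The plan is to prove the three assertions essentially as consequences of the Gröbner basis machinery developed in Section~\ref{Grobner}, together with the relation (\ref{topdim}) and the explicit computations of leading monomials. I would organize the argument around the following observation: for any fixed $n$ with $2^t-1\le n<2^{t+1}-1$, a monomial $w_2^bw_3^c$ lies outside $I_n$ exactly when it is not divisible by any $\mathrm{LM}(f_i)$, $0\le i\le t-1$, and by Proposition~\ref{LMgi} and (\ref{LMgiopada}) these leading monomials are totally ordered in the $w_2$-degree. So the ``maximal'' nonzero monomials are controlled by $f_{t-1}=w_3^{\,\alpha_{t-1}s_{t-2}+2^{t-1}-1}$ (which bounds the $w_3$-exponent via $\height(\widetilde w_3)$, already computed in Theorem~\ref{thm3}) and by $f_{t-2}$ (which, in the relevant ranges, is of the form $w_2^{2^{t-1}}w_3^{\,e}$ as seen in Examples~\ref{2t-1}, \ref{2t+2t-1}, \ref{2t+1-2s+1+1}, \ref{2t+1-2s}).

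For part~(a), $n<2^t-1+2^t/3$: here $3n-2^{t+1}-2<2^{t+1}-5$, i.e.\ $3n<3\cdot2^{t+1}/2$... more precisely the stated inequality $2b+3c\le 2^{t+1}-8$ should follow by combining (\ref{topdim}), which gives $2b+3c\le 3n-10$, with the fact that in this low range of $n$ every cohomology class in dimension $\le 3n-2^{t+1}-2$ is a polynomial in $\widetilde w_2,\widetilde w_3$, hence Poincaré duality forces a nonzero monomial to be paired against a polynomial-in-$\widetilde w_2,\widetilde w_3$ complementary class, pushing $2b+3c$ down by $2(2^{t+1}-4-(3n-2^{t+1}-2))$; I would make this precise via the top-class argument exactly as in the paragraph preceding Lemma~\ref{monom}: if $2b+3c>2^{t+1}-8$ then the complementary dimension $3n-9-(2b+3c)<3n-2^{t+1}-1=1+\mathrm{charrank}$, so the Poincaré-dual class would be a polynomial in $\widetilde w_2,\widetilde w_3$, contradicting (\ref{topdim}). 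Part~(b), $n>2^t-1+2^t/3$: the same duality argument, now with $\mathrm{charrank}=2^{t+1}-5$, yields that the complement of a nonzero $\widetilde w_2^b\widetilde w_3^c$ in dimension $3n-9$ must have dimension $\ge 2^{t+1}-4$ (Lemma~\ref{monom}(b)), hence $2b+3c\le 3n-9-(2^{t+1}-4)=3n-2^{t+1}-5$.

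For part~(c) — the substantive one — I would argue as follows. Under the hypotheses $2^t-1+2^t/3<n\le 2^t+2^{t-1}$ and $2^{t+1}-8<2b+3c\le 3n-2^{t+1}-5$, I want to ``fill in'' the monomial $\widetilde w_2^b\widetilde w_3^c$ by multiplying by powers of $\widetilde w_2$ and $\widetilde w_3$ until its dimension hits exactly $3n-2^{t+1}-5$, keeping it nonzero throughout. The key is that in this range of $n$ we are in the regime of Theorem~\ref{thm2} where $\height(\widetilde w_2)=2^t-4$ and of Theorem~\ref{thm3} where $\height(\widetilde w_3)=2^{t-1}-2$ (when $\alpha_{t-1}=0$) or larger; and crucially, from the Gröbner basis the nonzero monomials with $\deg_{w_2}<2^{t-1}$ form a ``rectangle'' up to the $f_{t-2}$-wall. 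Concretely, using Examples~\ref{2t-1}--\ref{2t+2t-1} the two relevant leading monomials are $\mathrm{LM}(f_{t-2})=w_2^{2^{t-1}}w_3^{2^{t-2}-1}$ (or $w_2^{2^{t-1}}w_3^{2^{t-2}-1}$ in the $n=2^t+2^{t-1}$ case as well) and $\mathrm{LM}(f_{t-1})=w_3^{2^{t-1}-1}$, so a monomial $w_2^{b'}w_3^{c'}$ with $b'<2^{t-1}$ and $c'<2^{t-1}-1$ is automatically outside $I_n$ provided it also avoids $\mathrm{LM}(f_{t-3}),\ldots,\mathrm{LM}(f_0)$, whose $w_2$-degrees exceed $2^{t-1}$ — hence those impose no constraint once $b'<2^{t-1}$. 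I would first observe $b\le\height(\widetilde w_2)=2^t-4$ and $c\le\height(\widetilde w_3)$, then split into cases according to whether $b<2^{t-1}$ or $b\ge 2^{t-1}$: in the former case I increase $c$ (staying below the $w_3^{2^{t-1}-1}$ wall, which is possible because $3(2^{t-1}-1-c)$ exceeds the dimension gap $3n-2^{t+1}-5-(2b+3c)<3\cdot2^{t-1}-\ldots$, a short numerical check); in the latter case I note $b\ge 2^{t-1}$ together with the dimension bound forces $c$ small, and I increase $b$ up to $2^t-4$ while tracking the $f_{t-2}$-wall. The precise bookkeeping — showing that the target dimension $3n-2^{t+1}-5$ is reachable without ever crossing $\mathrm{LM}(f_{t-2})$ or $\mathrm{LM}(f_{t-1})$ — is where the argument is delicate, and I expect this case analysis (handling the boundary values $n=2^t+2^{t-1}$, $n=2^t+2^{t-1}-1$, and the behavior of $\alpha_{t-1}$) to be the main obstacle; the inequalities are all elementary but need to be arranged so the ``staircase'' of reachable exponents genuinely contains a lattice point on the target diagonal $2x+3y=3n-2^{t+1}-5$.
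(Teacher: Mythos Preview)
Your treatment of parts~(a) and~(b) is essentially the paper's: Poincar\'e duality plus the characteristic-rank bound encoded in Lemma~\ref{monom}. That part is fine.

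For part~(c) there is a genuine gap, and your approach diverges from the paper's in a way that misses the key idea. The paper does \emph{not} lift the monomial through the Gr\"obner staircase at all. Instead it again takes a Poincar\'e-dual class $y$ with $\widetilde w_2^b\widetilde w_3^c\,y\neq0$ in $H^{3n-9}$, notes that the hypotheses on $2b+3c$ force $2^{t+1}-4\le|y|<3n-2^{t+1}-1$, and then invokes the Basu--Chakraborty structural result \cite[Theorem~A]{BasuChakraborty}: in this dimension range the only indecomposable class (modulo polynomials in $\widetilde w_2,\widetilde w_3$) is $a_{2^{t+1}-4}$. Hence $y$ may be taken of the form $\widetilde w_2^k\widetilde w_3^l\,a_{2^{t+1}-4}$ (the exponent of $a_{2^{t+1}-4}$ must be exactly~$1$ by~(\ref{topdim}) and the bound $n\le2^t+2^{t-1}$), and then $\widetilde w_2^{b+k}\widetilde w_3^{c+l}a_{2^{t+1}-4}\neq0$ gives~(c) immediately. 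This is a few lines once the external input is in place.

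Your alternative, a direct combinatorial lift inside the staircase of nonzero monomials, has two concrete problems. First, your claimed form $\mathrm{LM}(f_{t-2})=w_2^{2^{t-1}}w_3^{2^{t-2}-1}$ is wrong for most $n$ in the range of~(c): Examples~\ref{2t+2t-1-1} and~\ref{2t+2t-1} cover only $n\in\{2^t+2^{t-1}-1,\,2^t+2^{t-1}\}$, whereas for every $n$ in the range with $\alpha_{t-1}=0$ (i.e.\ all $n\le 2^t+2^{t-1}-2$) Proposition~\ref{LMgi} gives $\deg_{w_2}\big(\mathrm{LM}(f_{t-2})\big)=2^{t-2}$, not $2^{t-1}$, so your case split at $b<2^{t-1}$ is based on the wrong threshold. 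Second, even with correct leading monomials, ``increase $c$'' or ``increase $b$'' alone does not suffice: the gap $3n-2^{t+1}-5-(2b+3c)$ is in general neither a multiple of~$2$ nor of~$3$, so you need a genuine two-parameter search, and verifying that some $(k,l)$ lands inside the staircase for \emph{every} admissible $n$ and every admissible starting $(b,c)$ is exactly the delicate bookkeeping you acknowledge not having done. The Basu--Chakraborty input is precisely what lets the paper avoid this.
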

\begin{proof}
(a) Due to (\ref{topdim}) we know that $2b+3c<3n-9$, and Poincar\'e duality applies to give us a class $y\in H^*(\widetilde G_{n,3})$ with the property $\widetilde w_2^b\widetilde w_3^cy\neq0$ in $H^{3n-9}(\widetilde G_{n,3})$. Again by (\ref{topdim}) $y$ cannot be a polynomial in $\widetilde w_2$ and $\widetilde w_3$, and so $|y|\ge3n-2^{t+1}-1$ by Lemma \ref{monom}(a). Now we have
\[2b+3c=3n-9-|y|\le3n-9-(3n-2^{t+1}-1)=2^{t+1}-8.\]

(b) This claim is proved by using Lemma \ref{monom}(b) in the same way as part (a).

(c) For a class $y\in H^*(\widetilde G_{n,3})$ such that $\widetilde w_2^b\widetilde w_3^cy\neq0$ in $H^{3n-9}(\widetilde G_{n,3})$ we have
\[2^{t+1}-4\le|y|=3n-9-(2b+3c)<3n-9-(2^{t+1}-8)=3n-2^{t+1}-1.\]
However, according to \cite[Theorem A]{BasuChakraborty}, in this dimension range there is only one indecomposable class $a_{2^{t+1}-4}\in H^{2^{t+1}-4}(\widetilde G_{n,3})$ (up to addition of a polynomial in $\widetilde w_2$ and $\widetilde w_3$). This means that we can take $y$ to be of the form $\widetilde w_2^k\widetilde w_3^la_{2^{t+1}-4}$ (the exponent of $a_{2^{t+1}-4}$ must be at least $1$ due to (\ref{topdim}), and it cannot be $2$ or more because $n\le2^t+2^{t-1}$ implies $2(2^{t+1}-4)>3n-2^{t+1}-1>|y|$).

Finally, as a consequence of $\widetilde w_2^{b+k}\widetilde w_3^{c+l}a_{2^{t+1}-4}=\widetilde w_2^b\widetilde w_3^cy\neq0$ in $H^{3n-9}(\widetilde G_{n,3})$ we have $\widetilde w_2^{b+k}\widetilde w_3^{c+l}\neq0$, and
\[2(b+k)+3(c+l)=3n-9-(2^{t+1}-4)=3n-2^{t+1}-5,\]
which is what we wanted to prove.
\end{proof}

In the following theorem we establish that the subalgebra $W_n\le H^*(\widetilde G_{n,3})$ (gene\-rated by $\widetilde w_2$ and $\widetilde w_3$) completely determines $\cupp_{\mathbb Z_2}(\widetilde G_{n,3})$ (the number $M_n$ from the theorem is actually the cup-length of this subalgebra).

\begin{theorem}\label{thmcuplength}
Let $n\ge7$ be an integer.
\begin{itemize}
\item[(a)] If (\ref{cuplength1}) is a monomial which realizes $\cupp_{\mathbb Z_2}(\widetilde G_{n,3})$, then $m=1$ (that is, there is exactly one indecomposable class other than $\widetilde w_2$ and $\widetilde w_3$ in (\ref{cuplength1})).\\
\item[(b)] If
$M_n=\max\big\{b+c\mid\widetilde w_2^b\widetilde w_3^c\neq0\mbox{ in } H^*(\widetilde G_{n,3})\big\}$, then
\[\cupp_{\mathbb Z_2}(\widetilde G_{n,3})=M_n+1.\]
\end{itemize}
\end{theorem}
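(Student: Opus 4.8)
The plan is to prove part (a) first and then derive part (b) from it. For (a), let $\widetilde w_2^b\widetilde w_3^cx_1\cdots x_m$ be a monomial of the form (\ref{cuplength1}) realizing $\cupp_{\mathbb Z_2}(\widetilde G_{n,3})$; by the discussion preceding the theorem it is nonzero, has cohomological dimension $3n-9$, and is a product of $b+c+m$ positive-dimensional classes, so $\cupp_{\mathbb Z_2}(\widetilde G_{n,3})=b+c+m$. Each $x_i$ is indecomposable and different from $\widetilde w_2,\widetilde w_3$, so $|x_i|\ge\delta:=1+\mathrm{charrank}(\widetilde\gamma_{n,3})$ by Lemma \ref{monom}, and $m\ge1$ since (\ref{topdim}) forbids a top-dimensional monomial in $\widetilde w_2,\widetilde w_3$ alone. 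From $2b+3c+\sum_{i=1}^m|x_i|=3n-9$ and $c\ge0$ we obtain $2(b+c)\le2b+3c\le3n-9-m\delta$, hence
\[\cupp_{\mathbb Z_2}(\widetilde G_{n,3})=b+c+m\le\frac{3n-9-m\delta}{2}+m=\frac{3n-9}{2}-\frac{m(\delta-2)}{2}.\]
For $t\ge3$ one has $\mathrm{charrank}(\widetilde\gamma_{n,3})\ge3$, so $\delta>2$ and the right-hand side decreases in $m$; thus $m\ge2$ would give $\cupp_{\mathbb Z_2}(\widetilde G_{n,3})\le\frac{3n-5-2\delta}{2}$.

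To rule this out, I would combine it with the lower bound $\cupp_{\mathbb Z_2}(\widetilde G_{n,3})\ge\height(\widetilde w_2)+1$ recorded before the theorem and the value of $\height(\widetilde w_2)$ supplied by Theorem \ref{thm2}. Matching each $n$ to the appropriate branch of Theorem \ref{thm2} and inserting the two possible values of $\delta$, the needed inequality $\frac{3n-5-2\delta}{2}<\height(\widetilde w_2)+1$ reduces to an elementary one: when $n<2^t-1+2^t/3$ (so $\delta=3n-2^{t+1}-1$, $\height(\widetilde w_2)=2^t-4$) it becomes $n>(2^{t+1}+3)/3$; when $2^t-1+2^t/3<n\le2^t+2^{t-1}$ (so $\delta=2^{t+1}-4$, $\height(\widetilde w_2)=2^t-4$) it becomes $n<2^{t+1}-3$; and when $2^{t+1}-2^{s+1}+1\le n\le2^{t+1}-2^s$ for some $1\le s\le t-2$ (so $\delta=2^{t+1}-4$, $\height(\widetilde w_2)=2^{t+1}-3\cdot2^s-1$) it becomes $3\cdot2^s+3<2^{t+1}$. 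All three hold whenever $2^t-1\le n<2^{t+1}-1$ and $t\ge3$, so $m\ge2$ is impossible and $m=1$, proving (a).

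Part (b) then follows. The inequality $\cupp_{\mathbb Z_2}(\widetilde G_{n,3})\le M_n+1$ is immediate from (a): a realizing monomial is $\widetilde w_2^b\widetilde w_3^cx_1$, and $\widetilde w_2^b\widetilde w_3^cx_1\ne0$ forces $\widetilde w_2^b\widetilde w_3^c\ne0$, so $b+c\le M_n$ and $\cupp_{\mathbb Z_2}(\widetilde G_{n,3})=b+c+1\le M_n+1$. For the reverse inequality, pick $b_0,c_0$ with $\widetilde w_2^{b_0}\widetilde w_3^{c_0}\ne0$ and $b_0+c_0=M_n$; by (\ref{topdim}) this is not a top-dimensional class, so Poincar\'e duality gives a positive-dimensional $y$ with $\widetilde w_2^{b_0}\widetilde w_3^{c_0}y\ne0$. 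Expanding $y$ into monomials in the indecomposable generators, some summand $\widetilde w_2^{b'}\widetilde w_3^{c'}x_1\cdots x_{m'}$ satisfies $\widetilde w_2^{b_0+b'}\widetilde w_3^{c_0+c'}x_1\cdots x_{m'}\ne0$; then $\widetilde w_2^{b_0+b'}\widetilde w_3^{c_0+c'}\ne0$ forces $b'=c'=0$ by maximality of $M_n$, while $|y|>0$ forces $m'\ge1$. Hence $\widetilde w_2^{b_0}\widetilde w_3^{c_0}x_1\cdots x_{m'}$ is a nonzero product of $M_n+m'\ge M_n+1$ positive-dimensional classes, so $\cupp_{\mathbb Z_2}(\widetilde G_{n,3})\ge M_n+1$.

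The only delicate point I anticipate is the bookkeeping in the case analysis of part (a): one must check that the crude estimate $2(b+c)\le2b+3c$ is still strong enough in the extreme ranges (for $n$ just above $2^t-1$ and just below $2^{t+1}-1$) and that each $n$ is paired with the correct branch of Theorem \ref{thm2}. Everything else --- the reduction to dimension $3n-9$, indecomposability of $\widetilde w_2$ and $\widetilde w_3$, and the Poincar\'e-duality step in (b) --- is routine in light of the material preceding the theorem.
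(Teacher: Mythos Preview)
Your proof is correct and follows essentially the same strategy as the paper: in part (a) you compare the dimension bound coming from Lemma \ref{monom} against the lower bound $\cupp_{\mathbb Z_2}(\widetilde G_{n,3})\ge\height(\widetilde w_2)+1$ from Theorem \ref{thm2}, and part (b) is the same Poincar\'e-duality argument. The only organizational difference is that you package the $m\ge2$ case into a single inequality $\cupp\le\frac{3n-9}{2}-\frac{m(\delta-2)}{2}$ (monotone in $m$), whereas the paper first bounds $m$ crudely and then rules out $m=2$ (and $m=3$ in the case $n=2^t-1$) one at a time; your version is slightly more uniform but the content is the same.
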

\begin{proof}
(a) Let $\widetilde w_2^b\widetilde w_3^cx_1x_2\cdots x_m$ be a monomial that realizes $\cupp_{\mathbb Z_2}(\widetilde G_{n,3})$, i.e., $b+c+m=\cupp_{\mathbb Z_2}(\widetilde G_{n,3})$, where $x_1,x_2,\ldots,x_m$ are (not necessarily mutually different) indecomposable classes other than $\widetilde w_2$ and $\widetilde w_3$. We know that for the dimension of this monomial the following holds:
\begin{equation}\label{dimcuplength}
2b+3c+\sum_{i=1}^m|x_i|=3n-9.
\end{equation}

The inequality $m\ge1$ is immediate from (\ref{topdim}), and so, we are left to prove that $m\le1$. Let $t\ge3$ be the integer such that $2^t-1\le n<2^{t+1}-1$. We will distinguish two cases.

\medskip

\underline{Case 1}: If $2^t-1\le n<2^t-1+2^t/3$, then for all $i$ we have $|x_i|\ge3n-2^{t+1}-1$ (Lemma \ref{monom}(a)). So by (\ref{dimcuplength}) and the inequality $2^t\le n+1$ we get
\[3n-9\ge\sum_{i=1}^m|x_i|\ge m(3n-2^{t+1}-1)\ge m(3n-(2n+2)-1)=m(n-3),\]
and we conclude that $m\le3$. Moreover, $m$ might be equal to $3$ only if $b=c=0$ and $2^{t+1}=2n+2$, i.e., $n=2^t-1$. But that would mean that $\cupp_{\mathbb Z_2}(\widetilde G_{2^t-1,3})=3$, which is not possible, since by Theorem \ref{thm2} we have $\height(\widetilde w_2)=2^t-4>3$.
Therefore, $m\le2$, and it remains to rule out the possibility $m=2$.

Suppose $m=2$. Then by (\ref{dimcuplength}) and the inequality $n\ge2^t-1$ we would have
\begin{align*}
2(b+c)&\le2b+3c=3n-9-|x_1|-|x_2|\le3n-9-2(3n-2^{t+1}-1)\\
      &=2^{t+2}-3n-7\le2^{t+2}-3(2^t-1)-7=2^t-4,
\end{align*}
which would imply $\cupp_{\mathbb Z_2}(\widetilde G_{n,3})=b+c+2\le2^{t-1}-2+2=2^{t-1}$. On the other hand, $\cupp_{\mathbb Z_2}(\widetilde G_{n,3})>\height(\widetilde w_2)=2^t-4$ (Theorem \ref{thm2}), which is clearly a contradiction (because $t\ge3$).

\medskip

\underline{Case 2}: If $2^t-1+2^t/3<n\le2^{t+1}-2$, then $|x_i|\ge2^{t+1}-4$ for all $i$ (Lemma \ref{monom}(b)). Similarly as in the first case, we have
\[3n-9\ge\sum_{i=1}^m|x_i|\ge m(2^{t+1}-4)\ge m(n-2),\]
implying $m\le2$. Then, assuming $m=2$ we get
\[2(b+c)\le2b+3c=3n-9-|x_1|-|x_2|\le3n-9-2(2^{t+1}-4)=3n-2^{t+2}-1.\]

If $n\le2^t+2^{t-1}$, then
\[2(b+c)\le3n-2^{t+2}-1\le3(2^t+2^{t-1})-2^{t+2}-1=2^{t-1}-1.\]
This would mean that $\cupp_{\mathbb Z_2}(\widetilde G_{n,3})=b+c+2\le2^{t-2}-1+2=2^{t-2}+1$. However, we know that $\cupp_{\mathbb Z_2}(\widetilde G_{n,3})>\height(\widetilde w_2)=2^t-4$.

If $2^t+2^{t-1}+1\le n\le2^{t+1}-2$, then
\[2(b+c)\le3n-2^{t+2}-1\le3(2^{t+1}-2)-2^{t+2}-1=2^{t+1}-7,\]
and so $\cupp_{\mathbb Z_2}(\widetilde G_{n,3})=b+c+2\le2^t-4+2=2^t-2$. A contradiction is now derived by the fact $\cupp_{\mathbb Z_2}(\widetilde G_{n,3})>\height(\widetilde w_2)\ge2^t+2^{t-2}-1$ (Theorem \ref{thm2}; Table \ref{table:1}).

\medskip

(b) By the discussion at the beginning of the section, the cup-length is realized by a monomial of the form (\ref{cuplength1}), and by part (a) of the theorem
\[\cupp_{\mathbb Z_2}(\widetilde G_{n,3})=b+c+1\le M_n+1.\]
On the other hand, if $\widetilde w_2^{\overline b}\widetilde w_3^{\overline c}$ is a nonzero monomial with $\overline b+\overline c=M_n$, then (\ref{topdim}) gives $2\overline b+3\overline c<3n-9$, and by Poincar\'e duality, there exists a class $y\in H^{3n-9-2\overline b-3\overline c}(\widetilde G_{n,3})$ with the property $\widetilde w_2^{\overline b}\widetilde w_3^{\overline c}y\neq0$, leading to the conclusion
\[\cupp_{\mathbb Z_2}(\widetilde G_{n,3})\ge\overline b+\overline c+1=M_n+1.\]
This completes the proof of the theorem.
\end{proof}

Note that $\big\{b+c\mid\widetilde w_2^b\widetilde w_3^c\neq0\mbox{ in } H^*(\widetilde G_{n,3})\big\}\subseteq\big\{b+c\mid\widetilde w_2^b\widetilde w_3^c\neq0\mbox{ in } H^*(\widetilde G_{n+1,3})\big\}$. Namely, if $\widetilde w_2^b\widetilde w_3^c\neq0$ in $H^*(\widetilde G_{n,3})$, i.e., $w_2^bw_3^c\notin I_n$, then $w_2^bw_3^c\notin I_{n+1}$ (since $I_{n+1}\subseteq I_n$), i.e., $\widetilde w_2^b\widetilde w_3^c\neq0$ in $H^*(\widetilde G_{n+1,3})$. This means that
\begin{equation}\label{increasing}
M_n\le M_{n+1} \mbox{ for all } n.
\end{equation}

We are finally able to compute the $\mathbb Z_2$-cup-length of $\widetilde G_{n,3}$ for all $n$.

\begin{proof}[Proof of Theorem \ref{thm1}] We distinguish four cases.

\underline{Case $2^t+2^{t-1}<n<2^{t+1}-1$}: Let $s\in\{1,2,\ldots,t-2\}$ be the integer such that $2^{t+1}-2^{s+1}+1\le n\le2^{t+1}-2^s$. We want to show that $\cupp_{\mathbb Z_2}(\widetilde G_{n,3})=n-2^s-1$. According to Theorem \ref{thmcuplength}(b) it suffices to establish that
\[M_n=n-2^s-2.\]

We are going to prove first that $\widetilde w_2^{2^{t+1}-3\cdot2^s-1}\widetilde w_3^{n-2^{t+1}+2^{s+1}-1}\neq0$ in $H^*(\widetilde G_{n,3})$, which will imply $M_n\ge n-2^s-2$.
We will do this by actually proving that the monomial $w_2^{2^{t+1}-3\cdot2^s-1}w_3^{n-2^{t+1}+2^{s+1}-1}$ is not an element of the ideal $I_n$ (see (\ref{ekv})).

In the proof of Theorem \ref{thm2} we have established (see (\ref{druga lema cong 2})) the fact
\[w_2^{2^{t+1}-3\cdot2^s-1}+w_2^{2^{t-1}-1}w_3^{2^t-2^{s+1}}\in I_{2^{t+1}-2^s},\]
and since $I_{2^{t+1}-2^s}\subseteq I_n$, we have
\[w_2^{2^{t+1}-3\cdot2^s-1}\equiv w_2^{2^{t-1}-1}w_3^{2^t-2^{s+1}}\pmod{I_n}.\]
Multiplying this congruence by $w_3^{n-2^{t+1}+2^{s+1}-1}$ we get
\begin{equation}\label{druga lema cong 3}
w_2^{2^{t+1}-3\cdot2^s-1}w_3^{n-2^{t+1}+2^{s+1}-1}\equiv w_2^{2^{t-1}-1}w_3^{n-2^t-1}\pmod{I_n}.
\end{equation}

In order to show that $w_2^{2^{t-1}-1}w_3^{n-2^t-1}\notin I_n$, let us look at the Gr\"obner basis $F$ for $I_n$ in this case. Since $n-2^t+1\ge2^t-2^{s+1}+2\ge2^t-2^{t-1}+2>2^{t-1}$, we have $\alpha_{t-1}=1$, and so $s_{t-2}=n-2^t+1-2^{t-1}$ (see (\ref{fpol})). Now (\ref{ft-1}) gives us $f_{t-1}=w_3^{n-2^t}$, while Proposition \ref{LMgi} implies that
\[\deg_{w_2}\big(\mathrm{LM}(f_{t-2})\big)=\frac{n+1-s_{t-2}}{2}-2^{t-2}=2^{t-1}.\]
Also, $\deg_{w_2}\big(\mathrm{LM}(f_i)\big)>2^{t-1}$ for $0\le i\le t-3$ by (\ref{LMgiopada}). Now we see that $w_2^{2^{t-1}-1}w_3^{n-2^t-1}$ is not divisible by any of the leading monomials from $F$, which means that $w_2^{2^{t-1}-1}w_3^{n-2^t-1}\notin I_n$. By (\ref{druga lema cong 3}), $w_2^{2^{t+1}-3\cdot2^s-1}w_3^{n-2^{t+1}+2^{s+1}-1}\notin I_n$, and we have the inequality $M_n\ge n-2^s-2$.

We are left to prove $M_n\le n-2^s-2$. It suffices to show that $b+c\le n-2^s-2$ for every nonzero monomial $\widetilde w_2^b\widetilde w_3^c$ in $H^*(\widetilde G_{n,3})$. Assume to the contrary that $\widetilde w_2^b\widetilde w_3^c$ is a nonzero monomial with $b+c\ge n-2^s-1$. We know that $b\le2^{t+1}-3\cdot2^s-1$ (Theorem \ref{thm2}), and we get
\[c\ge n-2^s-1-b\ge n-2^s-1-2^{t+1}+3\cdot2^s+1=n-2^{t+1}+2^{s+1}.\]
Therefore,
\[2b+3c=2(b+c)+c\ge2(n-2^s-1)+n-2^{t+1}+2^{s+1}=3n-2^{t+1}-2.\]
However, this contradicts Lemma \ref{nonzeromonomials}(b).

\medskip

\underline{Case $n=2^t+2^{t-1}$}: The claim is that $\cupp_{\mathbb Z_2}(\widetilde G_{2^t+2^{t-1},3})=2^t-1$. As in the previous case, we use Theorem \ref{thmcuplength}(b) to reduce the claim to $M_{2^t+2^{t-1}}=2^t-2$.

Observe the monomial $w_2^{2^{t-1}-1}w_3^{2^{t-1}-1}$. We are going to prove that it is not an element of the ideal $I_{2^t+2^{t-1}}$, i.e., that the cohomology class $\widetilde w_2^{2^{t-1}-1}\widetilde w_3^{2^{t-1}-1}$ is nonzero, and we will have $M_{2^t+2^{t-1}}\ge2^t-2$.

In Example \ref{2t+2t-1} we calculated the last three polynomials from the Gr\"obner basis $F$ of $I_{2^t+2^{t-1}}$. They are:
\[f_{t-3}=w_2^{2^{t-1}+2^{t-3}}w_3^{2^{t-3}-1},\,
f_{t-2}=w_2^{2^{t-1}}w_3^{2^{t-2}-1} \mbox{ and }
f_{t-1}=w_3^{2^{t-1}}.\]
By (\ref{LMgiopada}), the leading monomials of all other polynomials from $F$ have the exponent of $w_2$ greater than $2^{t-1}$. So we see that there is no polynomial $f\in F$ such that $\mathrm{LM}(f)\mid w_2^{2^{t-1}-1}w_3^{2^{t-1}-1}$. Since $F$ is a Gr\"obner basis, $w_2^{2^{t-1}-1}w_3^{2^{t-1}-1}\notin I_{2^t+2^{t-1}}$.

In order to prove $M_{2^t+2^{t-1}}\le2^t-2$ we take a nonzero monomial $\widetilde w_2^b\widetilde w_3^c$ in $H^*(\widetilde G_{2^t+2^{t-1},3})$, and we want to show that $b+c\le2^t-2$. Assume to the contrary that $b+c>2^t-2$. The plan is to show that this would imply existence of a nonzero monomial of this form (with sum of the exponents greater than $2^t-2$) \em in dimension $5(2^{t-1}-1)$\em, and then to provide a contradiction by proving that such a monomial does not exist in that dimension.

Lemma \ref{nonzeromonomials}(b) implies $2b+3c\le3n-2^{t+1}-5$, and since $b+c>2^t-2$ we have
\[2b+3c\ge2(b+c)>2^{t+1}-4.\]
So Lemma \ref{nonzeromonomials}(c) applies to give us a nonzero monomial in $\widetilde w_2$ and $\widetilde w_3$ in dimension $3n-2^{t+1}-5=3(2^t+2^{t-1})-2^{t+1}-5=5(2^{t-1}-1)$,
whose sum of the exponents is greater than $2^t-2$. This monomial must be of the form $\widetilde w_2^{2^{t-1}-1+3j}\widetilde w_3^{2^{t-1}-1-2j}$ for some $j>0$ (the sum of the exponents is $2^t-2+j$).

Now we obtain a contradiction by proving that all (corresponding) monomials $w_2^{2^{t-1}-1+3j}w_3^{2^{t-1}-1-2j}$ with $j>0$ belong to the ideal $I_{2^t+2^{t-1}}$.
If $0<j\le2^{t-3}$, then
\begin{align*}
w_2^{2^{t-1}-1+3j}w_3^{2^{t-1}-1-2j}&=w_2^{3j-1}w_3^{2^{t-2}-2j}w_2^{2^{t-1}}w_3^{2^{t-2}-1}\\
                                    &=w_2^{3j-1}w_3^{2^{t-2}-2j}f_{t-2}\in I_{2^t+2^{t-1}}.
\end{align*}
If $2^{t-3}<j\le2^{t-3}+2^{t-4}$, then
\begin{align*}
w_2^{2^{t-1}-1+3j}w_3^{2^{t-1}-1-2j}&=w_2^{3j-2^{t-3}-1}w_3^{2^{t-2}+2^{t-3}-2j}w_2^{2^{t-1}+2^{t-3}}w_3^{2^{t-3}-1}\\
                                    &=w_2^{3j-2^{t-3}-1}w_3^{2^{t-2}+2^{t-3}-2j}f_{t-3}\in I_{2^t+2^{t-1}}.
\end{align*}
Finally, if $j>2^{t-3}+2^{t-4}$, then $2^{t-1}-1+3j>2^t+2^{t-4}-1>2^t-4$, and $\widetilde w_2^{2^{t-1}-1+3j}=0$ since $\height(\widetilde w_2)=2^t-4$ (Theorem \ref{thm2}).

\medskip

\underline{Case $n=2^t+2^{t-1}-1$}: The proof is similar to the one in the previous case. The monomial $w_2^{2^{t-1}-1}w_3^{2^{t-1}-2}$ is not divisible by any of the leading monomials $\mathrm{LM}(f)$, $f\in F$, (Example \ref{2t+2t-1-1}). This means that $M_{2^t+2^{t-1}-1}\ge2^t-3$.

For the opposite inequality, if $\widetilde w_2^b\widetilde w_3^c\neq0$ in $H^*(\widetilde G_{2^t+2^{t-1}-1,3})$, and $b+c>2^t-3$, then Lemma \ref{nonzeromonomials}(b,c) ensures that there is no loss of generality in assuming
\[2b+3c=3n-2^{t+1}-5=3(2^t+2^{t-1}-1)-2^{t+1}-5=5\cdot2^{t-1}-8.\]
Therefore, this monomial must be of the form $\widetilde w_2^{2^{t-1}-1+3j}\widetilde w_3^{2^{t-1}-2-2j}$ for some $j>0$.

To obtain a contradiction (as in the previous case) we look at the Gr\"obner basis $F$ for $I_{2^t+2^{t-1}-1}$ (Example \ref{2t+2t-1-1}) and prove that $w_2^{2^{t-1}-1+3j}w_3^{2^{t-1}-2-2j}\in I_{2^t+2^{t-1}-1}$ if $j>0$:
\begin{align*}
w_2^{2^{t-1}-1+3j}w_3^{2^{t-1}-2-2j}=\,&w_2^{3j-1}w_3^{2^{t-2}-1-2j}f_{t-2}\in I_{2^t+2^{t-1}-1}\quad (\mbox{if } 0<j\le2^{t-3}-1);\\
w_2^{2^{t-1}-1+3j}w_3^{2^{t-1}-2-2j}=\,&w_2^{3j-2^{t-3}-1}w_3^{2^{t-2}+2^{t-3}-1-2j}f_{t-3}\in I_{2^t+2^{t-1}-1}\\
                                     &(\mbox{if } 2^{t-3}\le j\le2^{t-3}+2^{t-4}-1);
\end{align*}
and if $j>2^{t-3}+2^{t-4}-1$, then $\widetilde w_2^{2^{t-1}-1+3j}\widetilde w_3^{2^{t-1}-2-2j}=0$ because $2^{t-1}-1+3j>2^t+2^{t-4}-4>2^t-4=\height(\widetilde w_2)$.

\medskip

\underline{Case $2^t-1\le n\le2^t+2^{t-1}-2$}: In this final case the statement we need to prove is $\cupp_{\mathbb Z_2}(\widetilde G_{n,3})=2^t-3$, i.e., $M_n=2^t-4$ (Theorem \ref{thmcuplength}(b)). By Theorem \ref{thm2}, $\height(\widetilde w_2)=2^t-4$, and so $M_n\ge2^t-4$.

In order to prove the opposite inequality we first note that $M_n\le M_{2^t+2^{t-1}-2}$ by (\ref{increasing}). This means that it is sufficient to prove $M_{2^t+2^{t-1}-2}\le2^t-4$.

So let $\widetilde w_2^b\widetilde w_3^c$ be a nonzero class in $H^*(\widetilde G_{2^t+2^{t-1}-2,3})$. We want to show that $b+c\le2^t-4$. Assume to the contrary that $b+c>2^t-4$. As before, since $2b+3c\ge2(b+c)>2^{t+1}-8$, we can use Lemma \ref{nonzeromonomials}(b,c) to achieve
\[2b+3c=3n-2^{t+1}-5=3(2^t+2^{t-1}-2)-2^{t+1}-5=5\cdot2^{t-1}-11.\]
By (\ref{increasing}) and the previous case, we also have
\[2^t-4<b+c\le M_{2^t+2^{t-1}-2}\le M_{2^t+2^{t-1}-1}=2^t-3,\,\mbox{ i.e., }\, b+c=2^t-3.\]
The only solution to the system
\begin{align*}
b+c&=2^t-3\\
2b+3c&=5\cdot2^{t-1}-11
\end{align*}
is the pair $(b,c)=(2^{t-1}+2,2^{t-1}-5)$. Therefore, for $t=3$ the specified class does not exist, and we are done. For $t\ge4$, by Proposition \ref{g-3}(d) and (\ref{uskladiti}) we have
\begin{align*}
w_2^{2^{t-1}+2}w_3^{2^{t-1}-5}&=w_2^2w_3^{2^{t-2}-4}w_2^{2^{t-1}}w_3^{2^{t-2}-1}\\
                              &=w_2^2w_3^{2^{t-2}-4}g_{2^t+2^{t-1}+2^{t-2}-3}\in I_{2^t+2^{t-1}-2},
\end{align*}
which contradicts the assumption $\widetilde w_2^b\widetilde w_3^c\neq0$ in $H^*(\widetilde G_{2^t+2^{t-1}-2,3})$. This concludes the proof of Theorem \ref{thm1}.
\end{proof}

\bibliographystyle{amsplain}

\end{document}